\newcommand\arXiv[1]{\href{http://arxiv.org/abs/#1}{\nolinkurl{arXiv:#1}}}
\newcommand\MRnumber[1]{\href{http://www.ams.org/mathscinet-getitem?mr=#1}{\nolinkurl{MR#1}}}
\newcommand\DOI[1]{\href{http://dx.doi.org/#1}{\nolinkurl{DOI:#1}}}
\newcommand\MAILTO[1]{\href{mailto:#1}{\nolinkurl{#1}}}
\newtheorem*{mainthm}{Main Theorem}
\newtheorem{dummy}{Dummy}[subsection]
\newtheorem{lemma}[dummy]{Lemma}
\newtheorem{proposition}[dummy]{Proposition}
\newtheorem{corollary}[dummy]{Corollary}
\newtheorem{theorem}[dummy]{Theorem}
\newtheorem{definition}[dummy]{Definition}
\theoremstyle{definition}
\newtheorem{remark}[dummy]{Remark}
\renewcommand\mathbb\mathds
\newcommand\bC{\mathbb C}
\newcommand\bK{\mathbb K}
\newcommand\bL{\mathbb L}
\newcommand\bP{\mathbb P}
\newcommand\bR{\mathbb R}
\newcommand\bZ{\mathbb Z}
\newcommand\cA{\mathcal A}
\newcommand\cB{\mathcal B}
\newcommand\cC{\mathcal C}
\newcommand\cE{\mathcal E}
\newcommand\cF{\mathcal F}
\newcommand\cG{\mathcal G}
\newcommand\cM{\mathcal M}
\newcommand\cN{\mathcal N}
\newcommand\cO{\mathcal O}
\newcommand\cR{\mathcal R}
\newcommand\cS{\mathcal S}
\newcommand\cT{\mathcal T}
\newcommand\cW{\mathcal W}
\newcommand\cX{\mathcal X}
\newcommand\cZ{\mathcal Z}
\newcommand\rB{\mathrm B}
\newcommand\rS{\mathrm S}
\newcommand\rU{\mathrm U}
\DeclareMathOperator\homology{H}
\renewcommand\H{\homology}
\renewcommand\d{\mathrm d}
\newcommand\s{\mathbf{S}}
\newcommand\W{\underline{\cW}}
\newcommand\SW{\underline{\rS\cW}}
\newcommand\longto\longrightarrow
\newcommand\mono\hookrightarrow
\newcommand\epi\twoheadrightarrow
\newcommand\isom{\overset\sim\to}
\newcommand\<\langle
\renewcommand\>\rangle
\newcommand\sminus\smallsetminus
\newcommand\condense{\mathrel{\,\hspace{.75ex}\joinrel\rhook\joinrel\hspace{-.75ex}\joinrel\rightarrow}}
\newcommand\one{\mathbb{1}}
\newcommand\id{\mathrm{id}}
\newcommand\Spin{\mathrm{Spin}}
\DeclareMathOperator\Aut{Aut}
\DeclareMathOperator\Cliff{Cliff}
\newcommand\Sq{\mathrm{Sq}}
\DeclareMathOperator\SH{SH}
\DeclareMathOperator\End{End}
\newcommand\pt{\mathrm{pt}}
\renewcommand\Vec{\cat{Vec}}
\newcommand\SVec{\cat{SVec}}
\newcommand\Rep{\cat{Rep}}
\newcommand\define[1]{\emph{#1}}
\newcommand\cat[1]{\mathbf{#1}}
\title{(3+1)D topological orders with only a $\bZ_2$-charged particle}
\author{Theo Johnson-Freyd}
\thanks{I thank Davide Gaiotto, Tian Lan, Dmitri Nykshych, David Reutter, Chong Wang,  Matthew Yu, and Hao Zheng for conversations related to this project.
 Research at the Perimeter Institute is supported by the Government of Canada through Industry Canada and by the Province of Ontario through the Ministry of Economic Development and Innovation. 
 The Perimeter Institute is in the Haldimand Tract, land promised to the Six Nations. Dalhousie University is in Mi`kma`ki, the ancestral and unceded territory of the Mi`kmaq. 
 \\[6pt]
\textsc{Perimeter Institute for Theoretical Physics, Waterloo, Ontario}, \MAILTO{theojf@pitp.ca}.\\ 
\textsc{Department of Mathematics, Dalhousie University, Halifax, Nova Scotia}, \MAILTO{theojf@dal.ca}.
}
\begin{document}
\begin{abstract}
There is exactly one bosonic (3+1)-dimensional topological order whose only nontrivial particle is an emergent boson: pure $\bZ_2$ gauge theory. There are exactly two (3+1)-dimensional topological orders whose only nontrivial particle is an emergent fermion: pure ``spin-$\bZ_2$'' gauge theory, in which the dynamical field is a spin structure; and an anomalous version thereof. I give three proofs of this classification, varying from hands-on to abstract.  Along the way, I provide a detailed study of the braided fusion $2$-category $\cZ_{(1)}(\Sigma \SVec)$ of string and particle operators in pure spin-$\bZ_2$ gauge theory.
\end{abstract}
\maketitle


\section{Introduction}

\subsection{Statement of the main result}

A \define{(3+1)D topological order}, if it has a single local vacuum, is described algebraically by a nondegenerate braided fusion $2$-category $\cB$ \cite{1405.5858,me-TopologicalOrders}. (Topological orders with multiple local vacua are usually rejected by condensed matter theorists because they are ``{unstable}'': a small deformation to the Hamiltonian will condense the system to one of the local vacua.)
 Physically, the objects of $\cB$ are the (1+1)D (quasi)string excitations in the topological order. Write
 $\one \in \cB$ for the invisible identity string, and
  $\Omega\cB = \End_\cB(\one)$ for the fusion $1$-category of its endomorphisms. The objects of $\Omega\cB$ are the (0+1)D (quasi)particle excitations in the topological order. It is a nontrivial fact \cite[Theorem 4]{me-TopologicalOrders} that a topological order (with a single local vacuum) is determined just by its strings and particles: a priori there could have been nontrivial information encoded in the $3$-category of (2+1)D membrane excitations, but in fact all the membranes arise in a systematic way as networks of strings.

The goal of this paper is to classify (3+1)D topological orders with only one simple particle~``$e$'' other than the invisible identity particle $1$. Since $\Omega\cB$ is automatically symmetric monoidal, $e$ necessarily enjoys a $\bZ_2$ fusion rule $e \otimes e \cong 1$: indeed, $\Omega\cB$ is isomorphic either to $\Rep(\bZ_2)$ or $\SVec$. The particle is called ``$e$'' in order to think of it as ``electrically charged,'' but for the gauge group $\bZ_2$ rather than the $\rU(1)$ of Maxwell theory: the world line of an $e$-particle is a Wilson line for the $\bZ_2$ gauge group.
The self-braiding $e \otimes e \to e \otimes e$ is $+1$ (times the identity on $e \otimes e \cong 1$) in $\Rep(\bZ_2)$, and $-1$ in $\SVec$: in other words, $e$ is an \define{emergent boson} if $\Omega\cB \cong \Rep(\bZ_2)$ and an \define{emergent fermion} if $\Omega\cB \cong \SVec$.  
I will prove:

\begin{mainthm}\hypertarget{mainstatement}{}
  There is a unique (up to non-unique isomorphism) (3+1)D topological order with a single nontrivial particle which is an emergent boson (i.e.\ $\Omega\cB \cong \Vec[\bZ_2]$). There are exactly two topological orders with a single nontrivial particle which is an emergent fermion ($\Omega\cB \cong \SVec$).
\end{mainthm}

To give names for ease of future reference, I will call these three topological orders ``$\cR$,'' ``$\cS$,'' and ``$\cT$.'' I will give detailed descriptions of all three 2-categories in Section~\ref{sec.3examples}. Briefly, the three topological orders are:
\begin{itemize}
  \item $\cR$, the unique topological order in which $e$ is a boson, is  plain $\bZ_2$ gauge theory. Another name for it is the \define{3+1D Toric Code}, whose lattice realization was introduced in~\cite{cond-mat/0411752} based on the 2+1D version from~\cite{Kitaev-QCM}, although the continuum gauge-theory model was studied as early as~\cite{MR289087}; see~\cite{MR4073072} for a survey. A direct analysis of the braided fusion 2-category $\cR$ is given in~\cite{2009.06564}. Algebraically, $\cR$ can be described as a Drinfel'd centre in various ways: $\cR \cong \cZ_{(1)}(\Sigma\Rep(\bZ_2)) \cong \cZ_{(1)}((\Sigma\Vec)[\bZ_2])$.
  \item $\cS$ deserves the name ``spin-$\bZ_2$ gauge theory'': instead of a dynamical $\bZ_2$ gauge field, its dynamical field is a spin structure $\eta$, i.e.\ a $\bZ_2$-valued 1-form solving $\d \eta = w_2$, where $w_2$ is some fixed cocycle representative of the second Stiefel--Whitney class (for instance given by a lattice realization of spacetime as in \cite{1505.05856}). As with plain gauge theory, the action functional is trivial. Algebraically, $\cS \cong \cZ_{(1)}(\Sigma\SVec)$.
  \item $\cT$ is an anomalous version of spin-$\bZ_2$ gauge theory studied in \cite[Section~5]{MR3321301} and in
  \cite[Section III.J.4]{KLWZZ2}, and further analyzed in
   unpublished work of Hao Zheng. It can be constructed in the continuum by positing that its only dynamical field is a spin structure $\eta$, but its action functional is $(-1)^{\int \eta w_3} \equiv (-1)^{\int w_2 \Sq^1 \eta}$ (where ``$\equiv$'' means modulo boundary terms). The non-closedness of $\eta$ means that this action has a gravitational anomaly equal to $(-1)^{w_2w_3}$. Algebraically, $\cS$ and $\cT$ are very similar, with identical fusion rules and almost identical braidings. Indeed, the only difference is that in $\cS$, the $\bZ_2$ 1-form symmetry generated by the magnetic string is nonanomalous, whereas in $\cT$  this symmetry has a nontrivial 't Hooft anomaly.
\end{itemize}

In the above bullet points and later, I have adopted the following notation, explained more fully in \S\ref{subsec.f2c}--\ref{subsect.attempt}. If $\cA$ is a fusion $n$-category, then $\cZ_{(1)}(\cA)$ denotes its \define{Drinfel'd centre}: roughly speaking, it consists of elements $X \in \cA$ together with an isomorphism between left and right multiplication by~$X$.
 If $\cB$ is a braided fusion $n$-category, then $\cZ_{(2)}(\cB)$ denotes its \define{M\"uger centre}: roughly speaking, it consists of elements $X \in \cB$ together with an isomorphism between braiding over and under by~$X$. The symbol $\Sigma$ denotes a \define{suspension} operation on monoidal Karoubi-complete higher categories developed in \cite{GJFcond,me-TopologicalOrders}: it takes (braided) fusion $n$-categories to (fusion) $(n{+}1)$-categories, and is essentially the same as what is called ``$\mathbf{Mod}$'' in \cite{2006.08022}.

\begin{remark}\label{remark.LKW}
The first sentence of the \hyperlink{mainstatement}{Main Theorem} follows immediately from the classification of 3+1D topological orders without emergent fermions from~\cite{PhysRevX.8.021074} together with the group cohomology calculation $\H^4_{\mathrm{gp}}(\bZ_2;\bC^\times) = \H^4(\bZ_2[1]; \bC^\times) = 0$. (Here and throughout, given an abelian group $A$, I will write $A[n]$ for the space more typically called $K(A,n)$ or $\rB^nA$.) 
It is worth emphasizing that the arguments in~\cite{PhysRevX.8.021074} and in its sequel~\cite{PhysRevX.9.021005} were given before a complete mathematical definition of ``fusion 2-category'' was first introduced in \cite{Reutter2018}. (Indeed, the preprints of \cite{PhysRevX.8.021074,PhysRevX.9.021005} appeared on the arXiv in 2017 and early 2018, respectively, whereas \cite{Reutter2018} appeared in very late 2018.) Nevertheless, modulo a few small holes filled in~\cite{me-TopologicalOrders}, the argument in \cite{PhysRevX.8.021074} is completely correct, and I will summarize it in Remark~\ref{LKWargument}.

The sequel \cite{PhysRevX.9.021005}, however, contains a small but important mathematical error. Namely, at one point in their argument, the authors of \cite{PhysRevX.9.021005} assume without proof that there is only one (3+1)D topological order $\cB$ with $\Omega\cB \cong \cat{SVec}$; in fact, as stated in the \hyperlink{mainstatement}{Main Theorem}, there are two. The error is quite understandable: as explained in \S\ref{subsec.fer}, the two topological orders with $\cS$ and $\cT$ are extremely similar, and one must look very closely in order to tell them apart. Furthermore, \cite{PhysRevX.9.021005} never claims itself to be mathematically rigorous, but rather to present a physically reasonable argument. Indeed, \cite{PhysRevX.9.021005} is an attempt to classify ``nonanomalous'' 3+1D topological orders, without selecting a mathematically precise definition of the term ``nonanomalous,'' and not an attempt to classify all nondegenerate braided fusion 2-categories. Although \cite{PhysRevX.9.021005} did overlook the 2-category $\cT$, the \hyperlink{mainstatement}{Main Theorem} provides post hoc support for their classification because, as indicated above, $\cT$ arises physically from an anomalous 3+1D system, and probably cannot be realized as a nonanomalous system (compare Remark~\ref{remark.MME}).
\end{remark}

\begin{remark}\label{remark.MME}
  Recall that a braided fusion 1-category $\cC$ is \define{slightly degenerate} if its M\"uger centre $\cZ_{(2)}(\cC)$ is equivalent to $\SVec$. It is a longstanding conjecture that every slightly degenerate braided fusion 1-category admits a \define{minimal modular extension}: an index-2 inclusion into a nondegenerate braided fusion 1-category \cite{MR1990929}.  
  According to an observation of Dmitri Nikshych reproduced in \cite[Remark~V.2]{me-TopologicalOrders}, this conjecture is equivalent to the conjecture that all bosonic 3+1D topological orders are Morita trivial.
  
  Indeed, an even sharper statement (Remark~\ref{rem.MMEWitt}) is available, and serves as one of the motivations of \cite{2006.08022}; I will sketch a proof in \S\ref{subsec.autos}. If $\cC$ is slightly degenerate, then $\cZ_{(1)}(\Sigma\cC)$ will be a 3+1D topological order with line operators $\SVec$. Choices of minimal modular extension of $\cC$, if they exist, are in bijection with choices of isomorphism $\cZ_{(1)}(\Sigma\cC) \cong \cS$.
  
  Thus, good control over the topological orders $\cB$ with $\Omega \cB \cong \SVec$ is quite valuable. In particular, in upcoming work, David Reutter and I will use the detailed descriptions from this paper to show that $\cT$ is not equivalent to $\cZ_{(1)}(\Sigma\cC)$ if $\cC$ is pseudounitary, establishing the minimal modularity conjecture in the pseudounitary case.
\end{remark}

For these reasons, the focus of this paper will be on the second sentence of the \hyperlink{mainstatement}{Main Theorem}. I will in fact give three proofs of the second sentence; these proofs occupy Section~\ref{sec.proofs}. The three proofs vary from hands-on to abstract, and illustrate different features of the classification problem. Before giving the proofs, Section~\ref{sec.3examples} describes in detail the three braided fusion 2-categories $\cR$, $\cS$, and $\cT$. And before that, I will complete this introductory section with some background on the main definitions.

\subsection{Fusion 2-categories} \label{subsec.f2c}

Fusion $2$-categories, first  axiomatized in~\cite{Reutter2018},  are a special type of monoidal $2$-category.
Good references on monoidal and braided monoidal $2$-categories include \cite[Appendix~C]{Schommer-Pries:thesis} and~~\cite[\S2.2]{2006.08022}.
All of my ``$n$-categories'' are ``weak $n$-categories'': composition of (${<}n$)-morphisms is associative only up to higher coherence, but $n$-morphisms form sets and so have strict composition. When $n=2$, ``$2$-category'' is equivalent to ``bicategory.'' 
I will write ``isomorphic'' for what many people might term ``equivalent'': there is no stricter meaningful notion of isomorphism in higher categories.

The extension to higher categories was first axiomatized in \cite{me-TopologicalOrders}, and a very nice treatment is available in~\cite[Section 3]{2011.02859}. Much of  \cite{me-TopologicalOrders} was inspired by the earlier informal treatment of fusion $n$-categories in \cite{1405.5858,KWZ1,KWZ2}.

The definition of ``fusion $2$-category'' from \cite{Reutter2018} parallels the case of fusion $1$-categories: 
\begin{definition}\label{defn.f2c}
  A \define{fusion $2$-category} $\cA$ is a monoidal $2$-category such that:
\begin{itemize}
  \item The underlying $2$-category of $\cA$ is $\bC$-linear finite semisimple.
  \item The unit object $\one \in \cA$  is simple.
  \item All objects  have monoidal duals.
\end{itemize}
\end{definition}

Definition~\ref{defn.f2c} requires some unpacking, especially concerning the notion of ``$\bC$-linear finite semisimple $2$-category.''
An $n$-category $\cA$ is \define{$\bC$-linear} if all sets of $n$-morphisms are $\bC$-vector spaces, and all composition maps are multilinear. When $n=2$, this means that for any objects $X,Y\in \cA$, $\hom_\cA(X,Y)$ should be a $\bC$-linear $1$-category. A $\bC$-linear $2$-category $\cA$ is \define{locally finite semisimple} if $\hom_\cA(X,Y)$ is always a finite semisimple $\bC$-linear category: it should be additively and Karoubi complete and completely reducible, with finitely many simple objects. 

It is easy to define direct sums, and hence additive-completeness, for $n$-categories. 
Let us say that an object is \define{indecomposable} if it does not decompose as a direct sum.
\cite{Reutter2018} also defines a notion of ``simple'' object in a 2-category, which, as in the 1-categorical case, is in general stronger but equivalent to indecomposability if the ambient 2-category is semisimple. As such, I will use the terms ``indecomposable'' and ``simple'' interchangeably.

Higher categorical Karoubi completion was developed for $2$-categories in \cite{Reutter2018} and for general $n$ in \cite{GJFcond}. I will not review the details but I will mention a notation: I will write ``$X \condense Y$'' when there is a \define{condensation} of $X$ onto $Y$; condensation is the $n$-categorical version of split surjection. 
\begin{definition}\label{defn.semisimple2cat}
  A $2$-category $\cA$ is \define{finite semisimple} over $\bC$ if it is locally finite semisimple, all $1$-morphisms have adjoints, it is additive and Karoubi complete, and it has only finitely many isomorphism classes of indecomposable objects.
\end{definition}

The following fact is used implicitly in \cite{Reutter2018}, and proved in $n$-categorical generality in \cite[Lemma~IV.1]{me-TopologicalOrders} and in \cite[Proposition~3.11]{2011.02859}:

\begin{lemma}
  Let $\cA$ be a finite semisimple $2$-category over $\bC$, and $X,Y \in \cA$ a pair of indecomposable objects. Then any nonzero $1$-morphism $X \to Y$ extends to a condensation $X \condense Y$. \qed
\end{lemma}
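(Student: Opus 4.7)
The plan is to extract the condensation data from the counit of an adjunction. Since every $1$-morphism in $\cA$ admits adjoints, I would first pick a right adjoint $\phi^R : Y \to X$ of the given nonzero $1$-morphism $\phi : X \to Y$, with counit $\epsilon : \phi \circ \phi^R \to \id_Y$. The usual adjunction bijection identifies $\hom(\phi \circ \phi^R, \id_Y) \cong \hom(\phi,\phi)$ and sends $\epsilon$ to $\id_\phi$; the latter is nonzero because $\phi$ is a nonzero object of the semisimple hom-category $\hom_\cA(X,Y)$, so $\epsilon$ itself is a nonzero $2$-morphism.

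Next I would use indecomposability of $Y$ to argue that $\id_Y$ is a \emph{simple} object of the semisimple $\bC$-linear $1$-category $\End_\cA(Y)$. Granting this, the nonzero $2$-morphism $\epsilon$ into the simple target $\id_Y$ is automatically a split epimorphism: $\id_Y$ appears as a direct summand of $\phi \circ \phi^R$, and $\epsilon$ projects onto this summand up to a nonzero scalar, which can be absorbed. Any splitting $\iota : \id_Y \to \phi \circ \phi^R$ with $\epsilon \circ \iota = \id_{\id_Y}$ then furnishes exactly the data $(\phi, \phi^R, \epsilon, \iota)$ of a condensation $X \condense Y$ in the $2$-categorical sense.

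The main obstacle is the claim in the previous paragraph that $\id_Y$ is simple in $\End_\cA(Y)$: this is where Karoubi completeness of $\cA$ is crucial. A nontrivial direct-sum decomposition $\id_Y \cong A \oplus B$ in $\End_\cA(Y)$ would give idempotent endomorphisms of $Y$ that inherit the structure of (separable) condensation monads from the trivial condensation monad on $\id_Y$; Karoubi completeness of $\cA$ would then split these monads into objects $Y_A, Y_B$ assembling into a nontrivial decomposition $Y \cong Y_A \oplus Y_B$, contradicting indecomposability of $Y$. Unpacking this idempotent-splitting step in the formalism of condensation monads from \cite{GJFcond,Reutter2018} is the only non-formal ingredient; everything else is a direct consequence of the adjunction triangle identities and the semisimplicity of the hom-categories.
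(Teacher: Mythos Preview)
The paper does not supply its own proof of this lemma: the $\qed$ follows immediately after citing \cite[Lemma~IV.1]{me-TopologicalOrders} and \cite[Proposition~3.11]{2011.02859}. Your argument is correct and is exactly the standard one those references give: pass to a right adjoint, observe via the adjunction bijection that the counit $\epsilon$ is nonzero, and split $\epsilon$ using that $\id_Y$ is simple in the semisimple category $\End_\cA(Y)$.

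You have also correctly isolated the one non-formal point, namely the equivalence ``$Y$ indecomposable $\Leftrightarrow$ $\id_Y$ simple,'' and your sketch for it is right. The only clarification I would add is that the summands $A,B$ of $\id_Y$ are automatically \emph{orthogonal idempotents} under composition (i.e.\ $A\circ A \cong A$, $A\circ B \cong 0$, etc.), because $\End_\cA(Y)$ is a multifusion $1$-category and this is the standard decomposition of the unit in such a category; that is what makes the separable-monad structure on $A$ genuinely well-defined rather than merely heuristic. With that in hand, Karoubi completeness splits the monads and gives the contradicting decomposition $Y \cong Y_A \oplus Y_B$, exactly as you say.
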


The composition of condensations $X \condense Y \condense Z$ is a condensation $X \condense Z$, and condensations are nonzero. This implies the \define{Categorical Schur's Lemma} of \cite{Reutter2018}, which says that ``related by a nonzero morphism'' is an equivalence relation on the set of indecomposable objects in $\cA$. Following \cite[Definition 1.2.22]{Reutter2018}, I will say that two indecomposable objects related by a nonzero morphism are in the same \define{component}. More generally:

\begin{definition}
  The \define{component} of an indecomposable object $X \in \cA$ is the full subcategory of $\cA$ on the objects $Y$, not necessarily indecomposable, that admit a condensation $X \condense Y$.
\end{definition}

For example, assuming that $\cA$ is fusion so that the unit object $\one\in \cA$ is indecomposable, it makes sense to talk about its \define{identity component}.
The components of $\cA$ are precisely the indecomposable direct summands of $\cA$ itself \cite[Remark 1.2.23]{Reutter2018}. The set of components is called $\pi_0 \cA$. I will say that a fusion 2-category $\cA$ is \define{connected} if $\pi_0 \cA = \{\one\}$, i.e.\ if the identity component is all of $\cA$.

\begin{remark}\label{remark.baddefn}
  Definition~\ref{defn.semisimple2cat} is best only over an algebraically closed field of characteristic $0$.
  If the characteristic is not zero, subtleties can arise from inseparable fusion $1$-categories; these do not exist in characteristic zero by \cite[Theorem 13]{DSPS}.
  If the field is not algebraically closed, then the Karoubi-completion requirement will typically be incompatible with the requirement that there be only finitely many isomorphism classes of simple objects, because Karoubi-completion produces simple objects for each Morita equivalence class of simple algebra. Rather, the correct finiteness condition for ``finite semisimple $2$-category'' is to ask that the set $\pi_0 \cA$ of components, and not the set of equivalence classes of indecomposables, should be finite.
\end{remark}

A typical example of a finite semisimple $2$-category is the category of finite semisimple modules for a fusion $1$-category $\cF$. I will call this $2$-category $\Sigma\cF$. This follows the notation of \cite{GJFcond}. Specifically, suppose $\cC$ is an additive and Karoubi complete $\bC$-linear $n$-category equipped with a choice of object $\one \in \cC$; then $\Omega \cC := \End_\cC(\one)$ is a monoidal additive and Karoubi complete $\bC$-linear $(n{-}1)$-category. The adjoint to $\Omega$ is $\Sigma$, which takes in a monoidal $(n{-}1)$-category, deloops it to a one-object $n$-category, and Karoubi completes. The unit $\id \to \Omega \Sigma$ of the adjunction $\Sigma \dashv \Omega$, when evaluated on a  monoidal Karoubi-complete $(n{-}1)$-category $\cA$, is an equivalence $\cA \isom \Omega\Sigma\cA$. The counit $\Sigma\Omega \to \id$, when evaluated on a pointed $n$-category $\cC$, is fully faithful, and its essential image is the full subcategory of $\cC$ on those objects $Y \in \cC$ which receive a condensation $\one \condense Y$. From this it follows that $\Sigma$ and $\Omega$ restrict to an equivalence:
$$ \{\text{connected semisimple $n$-categories with indecomposable unit}\} \simeq \{\text{fusion $(n{-}1)$-categories}\}.$$

The $\Omega$ and $\Sigma$ constructions furthermore have good monoidality relations. For instance, if $\cA$ is braided monoidal, then $\Sigma\cA$ is monoidal; if $\cA$ is symmetric monoidal, then so is $\Sigma\cA$. 
Of particular interest is the $(n{+}1)$-category $\Sigma^n\Vec$, and its subgroupoid of invertible objects (and invertible morphisms) $(\Sigma^n\Vec)^\times$.
The fusion $n$-category $\Sigma^{n-1}\Vec$ is called simply ``$n\Vec$'' in \cite{2011.02859}, and in~\S\ref{subsec.proof3} I will write $\W^n$ for its invertible subgroupoid $\W^n := (n\Vec)^\times = (\Sigma^{n-1}\Vec)^\times$.

\subsection{(3+1)D topological orders}\label{subsect.attempt}

The following definition, mentioned in the introduction, is formalized in \cite{me-TopologicalOrders}, based on ideas from \cite{1405.5858}:

\begin{definition}\label{defn.3dtoporder}
  A \define{(3+1)D topological order with a single local vacuum} is a braided fusion $2$-category $\cB$ which is \define{nondegenerate} in the sense of having trivial M\"uger centre.
\end{definition}

Physically, the objects of $\cB$ are the stringlike excitations in the topological order. The worldsheet of a string is a 2-dimensional surface, and so strings are also called \define{surface operators}; the 2-dimensionality is why they are objects of a 2-category. Strings can fuse, and the fusion is braided because in (3+1)D there are two transversal dimensions in which strings can move around. This is why the strings in any (3+1)D topological order form a braided multifusion 2-category $\cB$.

The ``fusion'' condition, demanding that the unit object $\one\in \cB$ be indecomposable, is equivalent to the requirement that the local vacuum be unique. In general, if there were multiple local vacua, then there would be vertex (i.e.\ 0-dimensional) operators that project a state onto a single vacuum; conversely, the set of local vacua is the spectrum of the commutative algebra $\Omega^2\cB$ of vertex operators. This algebra being $\bC$ is equivalent to the unit object being indecomposable.

The final condition in Definition~\ref{defn.3dtoporder} to explain is the nondegeneracy of the braiding. As in the case of fusion $1$-categories, any fusion $2$-category $\cA$ has a \define{Drinfel'd centre} $\cZ_{(1)}(\cA)$, which measures the failure of objects in $\cA$ to be detectable by commuting them with other objects. Its objects are objects $X \in \cA$ together with \define{centrality data}, i.e.\ a natural-in-$Y$ isomorphism $X \otimes Y \cong Y \otimes X$ and further data imposing that this isomorphism is multiplicative in $Y$. The shortest definition is to define $\cZ_{(1)}(\cA)$ as the $2$-category of endomorphisms of $\cA$-as-an-$\cA$-bimodule. By the same token, any braided fusion $2$-category $\cB$ has a \define{M\"uger}, aka \define{sylleptic}, \define{centre} $\cZ_{(2)}(\cB)$. It measures the failure of objects in $\cB$ to be detectable by braiding with other objects. Explicitly, an object of $\cZ_{(2)}(\cB)$ is an object $X \in \cB$ and a natural-in-$Y$ trivialization of the full braiding $\beta_{Y,X} \circ \beta_{X,Y} : X \otimes Y \to Y \otimes X \to X\otimes Y$; this trivialization must furthermore satisfy some braided monoidality requirements. The cleanest definition is to define $\cZ_{(2)}(\cB)$ as the $2$-category of endomorphisms of $\cB$ as a ``braided module'' over itself \cite{2006.08022}.

The nondegeneracy requirement in Definition~\ref{defn.3dtoporder} is that $\cZ_{(2)}(\cB)$ should be the trivial: simply the fusion $2$-category $2\cat{Vec}_\bC = \Sigma\cat{Vec}_\bC$. This axiom was first stated in \cite{1405.5858}, based on ideas in \cite{PhysRevX.3.021009}, and was first given a complete mathematical formulation in \cite{me-TopologicalOrders}. Physically, the axiom encodes the principle of \define{remote detectability}: it says that a nontrivial excitation should be {detectable} by braiding with other excitations.

\begin{remark}
A priori, to specify a (3+1)D topological order one would also need to specify the 3-category of membrane excitations. That this data is not required is a nontrivial theorem about fusion $n$-categories with trivial Drinfel'd centre: they are always connected \cite[Theorem~4]{me-TopologicalOrders}. To see that it is nontrivial, note that it fails if the ``fusion'' condition is dropped: a (3+1)D topological order with multiple local vacua is not determined by its braided multifusion $2$-category of strings; one needs to include information about the membranes, aka \define{domain walls}, that separate the different vacua, and the excitations supported by those membranes (see~\cite{2004.11395} for a physics discussion).
\end{remark}

How might one detect a particle excitation, i.e.\ an object of the symmetric fusion 1-category $\Omega\cB$? 
The easiest way would be to braid it around a string $X \in \cB$. Indeed, $\Omega\cB$ is symmetric monoidal, and so particles certainly cannot detect each other, but the movie of a particle moving around a string would evaluate to a number (or matrix), and ``detection'' occurs when this number is not $1$. Conversely, the easiest way to detect a string is by braiding it against  particles.

In the special case when the simple particles in $\Omega\cB$ form an abelian group $A$ (which occurs when the fusion of any two simple particles is again simple), this leads to  the expectation that the indecomposable strings should form the Pontryagin dual group $\hat{A} = \hom(A,\rU(1))$. A more precise expectation recognizes that the identity component $\Sigma\Omega\cB \subset \cB$ is symmetric monoidal (since $\Omega\cB$ is) and so should be quotiented out, giving the expectation:
\begin{equation}\label{expectation1}
 \pi_0 \cB \cong \hat{A}.
\end{equation}
In the general case with nonabelian particles, Tannakian duality extracts from the symmetric fusion category $\Omega\cB$ a finite group $G$ (isomorphic to $\hat{A}$ in the abelian case), and the reader might expect:
\begin{equation}\label{expectation2}
 \pi_0 \cB \cong G/G := \{\text{conjugacy classes in }G\}.
\end{equation}

Expectations (\ref{expectation1}--\ref{expectation2}) are stated fairly explicitly in \cite{PhysRevX.8.021074,PhysRevX.9.021005}, and form a main ingredient in their arguments. There are, however, reasons to hesitate. Particles in a (3+1)D topological order cannot detect each other, but a particle could be detected by a membrane operator rather than a string. Strings can moreover detect each other: indeed, there are typically nontrivial strings in the identity component of a (3+1)D topological order, and those strings are \emph{not} detected by particles but only by other strings. Thus the pairing between the set of strings and the set of particles is not \emph{a priori} perfect.

In spite of these reasons to hesitate, it turns out that expectations (\ref{expectation1}--\ref{expectation2}) do in fact hold. I do not know a proof of this in general that does not require the full classification \cite{PhysRevX.8.021074}, proved in \cite[Theorem 8 and Corollary V.5]{me-TopologicalOrders} without assuming (\ref{expectation1}--\ref{expectation2}). 

When $\Omega\cB = \Rep(\bZ_2)$ or $\SVec$, as is of interest in this paper, $G = \hat{A} = \bZ_2$. The strings in the non-identity component are called \define{magnetically charged}. It turns out that in all three cases $\cR,\cS,\cT$, there exists a magnetic string $m$ satisfying the fusion rules $m^2 \cong \one$. This is not automatic: for instance, as explained in \S\ref{subsec.bos},
in $\cR$, there is a noninvertible simple magnetic string $m'$, and so it is certainly not true that a random choice of simple magnetic string works; as explained in \S\ref{subsec.fer}, in $\cS$ and $\cT$ there are two non-isomorphic indecomposable magnetic strings, each squaring to the vacuum, and no canonical way to pick one. Given a fusion 2-category $\cC$, it is an interesting question to ask whether the map $\{$indecomposable objects of $\cC\} \to \pi_0 \cC$ splits in any nice way. The explicit calculations of show that for $\cC = \cR,\cS,\cT$, this map does split, but not canonically.

\section{The three examples} \label{sec.3examples}

This section describes in detail the three topological orders $\cR$, $\cS$, and $\cT$. Conditional to the expectations that $\pi_0 = \bZ_2$ in all cases and that in all cases there exists an invertible magnetic string, the calculations in this section constitute a proof of the \hyperlink{mainstatement}{Main Theorem}. Indeed, the first proof in Section~\ref{sec.proofs} will complete the argument by showing directly that if $\cB$ is a 3+1D topological order with $\Omega\cB \cong \SVec$, then $\pi_0\cB = \bZ_2$ and there is an invertible magnetic string. (I  was unable to find a proof of the corresponding statement in the ``emergent boson'' case that did not end up using  the full proof of the classification from \cite{PhysRevX.8.021074}.)

\subsection{The unique topological order in which \texorpdfstring{$e$}{e} is a boson} \label{subsec.bos}

I will now describe in some detail the unique 3+1D topological order $\cR$ with line operators $\Omega\cR \cong \Vec[\bZ_2] \cong \Rep(\bZ_2)$.
As mentioned already in 
Remark~\ref{remark.LKW}, the main result of \cite{PhysRevX.8.021074} implies that $\cR$ is necessarily the 2-category of operators in plain $\bZ_2$ Dijkgraaf--Witten theory, also called the 3+1D Toric Code. The braided fusion categories for general 3+1D Dijkgraaf--Witten theories are studied in \cite{1905.04644}, and the special case of the 3+1D Toric Code is further studied in \cite{2009.06564}; my description is consistent with theirs.

The identity component of $\cR$ is $\Sigma\Omega\cR = \Sigma\Rep(\bZ_2)$. This 2-category is described for example in \cite[Example 1.4.14]{Reutter2018}. It contains two indecomposable objects, corresponding to the two Morita equivalence classes of simple algebra objects in $\Rep(\bZ_2)$. 
One of these objects is the vacuum, corresponding to the trivial algebra $\bC \in \Rep(\bZ_2)$. 
I will call it $\one$ to distinguish it from the vacuum line ``$1$''.

The other object doesn't have a standard name, but corresponds to the algebra object $\cO(\bZ_2) = \bC[x]/(x^2=1) \in \Rep(\bZ_2)$, where the generator $x$ is nontrivially charged under the $\bZ_2$-action. Note that $\cO(\bZ_2)$ is isomorphic to $\bC \oplus \bC$ if the $\bZ_2$-symmetry is ignored, but it is simple as an algebra object in $\Rep(\bZ_2)$: it does not contain any proper $\bZ_2$-equivariant ideals. The corresponding quasistring was first described in \cite{1702.02148} under the name ``Cheshire string''; for that reason, I will call it plain ``$c$.''  I will write $e \in \Omega\cB = \Rep(\bZ_2)$ for the sign representation. The fusion rules are, of course, $e^2 \cong 1$ and
\begin{equation}\label{fusionrule-cheshire}
  c^2 \cong c \oplus c.
\end{equation}
Indeed, equation (\ref{fusionrule-cheshire}) corresponds to an isomorphism of $\bZ_2$-equivariant algebras
$$ \cO(\bZ_2) \otimes \cO(\bZ_2) = \bC[x,y]/(x^2=y^2=1) \cong \cO(\bZ_2) \oplus \cO(\bZ_2)$$
where the two projections correspond to setting $x=y$ or $x = -y$.

\begin{remark}
  The fact that $c$ and $\one$ are in the same component means that there is a non-invertible 1-morphism between them. I say ``between them'' because each 1-morphism has an adjoint, and in a fusion $2$-category left- and right-adjoints for 1-morphisms are always isomorphic. In fact, there is a ``unique'' 1-morphism between $\one$ and $c$ in the sense that the category $\hom_{\Sigma\Rep(\bZ_2)}(\one,c)$ is equivalent to $\Vec$. Note that $\End_{\Sigma\Rep(\bZ_2)}(\one) = \Omega\Sigma\Rep(\bZ_2) = \Rep(\bZ_2)$ by construction, and one can calculate that $\End_{\Sigma\Rep(\bZ_2)}(c)$ is also equivalent to $\Rep(\bZ_2)$.
\end{remark}

\begin{remark}\label{drinfeldcentres}
  Any choice of gapped $n$D boundary condition for any $n$+1D topological order~$\cB$ realizes $\cB$ as the Drinfel'd centre of the fusion $(n{-}1)$-category of operators on the boundary (provided both the bulk and the boundary have unique local vacua). Every Dijkgraaf--Witten theory has a canonical \define{Dirichlet} boundary condition. In 3+1D, the Dirichlet boundary operators form the fusion 2-category $(\Sigma\Vec)^\alpha[G]$, where $\alpha \in \H^4(G[1]; \bC^\times)$ is physically the Dijkgraaf--Witten action, and mathematically the associator data for $(\Sigma\Vec)^\alpha[G]$, which is the 2-category with $G$ many components, each of which is a copy of $\Sigma\Vec$ (hence the parenthesis to distinguish it from $\Sigma(\Vec[G])$). This is why \cite{1905.04644} calculates the Drinfeld centre $\cZ((\Sigma\Vec)^\alpha[G])$. (My ``$\Sigma\Vec$'' is their ``$2\Vec$.'') In particular, the 2-category under consideration arises as
  $$ \cR \cong \cZ_{(1)}((\Sigma\Vec)[\bZ_2]).$$
  
  When the Dijkgraaf--Witten action is trivialized, there is a second canonical boundary condition, namely the \define{Neumann} boundary. It is characterized by the property that the bulk 't Hooft operators condense on the boundary, and the Wilson lines do not. Said mathematically, the Neumann boundary operators form connected the fusion 2-category $\Sigma\Rep(G)$. In particular, the 2-category under consideration arises as
  $$ \cR \cong \cZ_{(1)}(\Sigma\Rep(\bZ_2)).$$
  This description has an analog for the 3+1D topological order $\cS$; see Remark~\ref{remark.spinNeumann}.
\end{remark}

From either description in Remark~\ref{drinfeldcentres}, it is easy to calculate that $\cR$ has a single non-identity component: in the notation of \cite[Remark 1.2.23]{Reutter2018}, $\pi_0 \cR = \bZ_2$. The objects in the nontrivial component are the \define{magnetically charged} strings. The characterizing feature of magnetic charge is that a magnetic string links nontrivially with the electron $e$: when an $e$ particle is moved all the way around a magnetic string, the state picks up a phase factor of $-1$.

It turns out that the magnetic component has the same structure as the identity component. There is an indecomposable object $m \in \cB$ which is invertible: in fact, $m^2 \cong \one$. This implies already that $\End_\cB(m) \cong \Rep(\bZ_2)$. The other indecomposable object is $m' := m \otimes c$. It is not invertible, but rather satisfies the fusion rules
$ (m')^2 \cong c\oplus c.$
As with the identity component, there is a nonzero, but non-invertible, 1-morphism connecting $m$ and $m'$: $\hom_\cC(m,m') \cong \Vec$.
This is enough to determine $\cR$ as a linear $2$-category: it has two components, each equivalent to $\Sigma\Rep(\bZ_2)$.

To really understand a braided fusion $2$-category  requires more than the fusion rules: the braided monoidal structure is data. In general, braided monoidality data can be quite complicated. This is familiar from the fusion 1-category, where the associator data, for example, consists of matrices solving some overdetermined inhomogeneous polynomial equations, modulo certain changes of basis. Also familiar from the 1-category case is the fact that these data simplify dramatically for ``grouplike'' categories in which all the simple objects are invertible. Indeed, fusion categories $\cC$ in which the simple objects have fusion rules following a finite group $G$ are classified by ordinary group cohomology $\H^3_{\mathrm{gp}}(G;\bC^\times) = \H^3(G[1]; \bC^\times)$; if $G$ is abelian, then the braided fusion categories $\cC$ with fusion rules $G$ are classified by $\H^4(G[2]; \bC^\times)$. Specifically, these classes are the Postnikov invariants for extensions $\bC^\times[2].G[1]$ and $\bC^\times[3].G[2]$, respectively. These extensions arise as (the classifying spaces of) the invertible subcategories of $\cC$, i.e.\ the subcategories of invertible objects and invertible 1-morphisms. The full category can be reconstructed from its invertible subcategory by promoting the group $\bC^\times$ of morphisms back to the $\bC$-algebra $\bC$, and then additively and Karoubi-completing. I will refer to this process of promoting $\bC^\times \leadsto \bC$ and then additively and Karoubi-completing just as \define{linearization}.

Exactly the same thing happens in fusion 2-categories. In general the data is complicated, but it is merely cohomological when the 2-category arises as a linearization of a homotopy type with $\bC^\times$ in top degree. This is the situation for $\cR$. Indeed, let $\cR^\times$ denote its invertible braided fusion sub-2-category. The objects of $\cR^\times$ are just $\one$ and $m$, and the 1-morphisms are just $\Aut(\one) \cong \Aut(m) \cong \{1,e\}$. The 2-morphisms are just $\Aut(1) \cong \Aut(e) \cong \bC^\times$. Said another way, we have:
\begin{equation}
  \cR^\times = \bC^\times[2].\bZ_2[1].\bZ_2[0].
\end{equation}
In fact, ignoring the monoidal structure and treating $\cB^\times$ just as a 2-category, these extensions all split. Indeed, the only possible Postnikov extension datum is the connecting map for $\bC^\times[2].\bZ_2[1]$, which lives in $\H^3(\bZ_2[1]; \bC^\times)$ and encodes the associator (but not braiding) on $\Omega\cR = \Rep(\bZ_2)$. This associator is trivial.
Furthermore:
\begin{lemma}\label{lemma.Rlinearization}
  $\cR$ is the linearization of $\cR^\times$.
\end{lemma}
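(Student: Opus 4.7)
The plan is to show that the universal braided fusion $2$-category obtained by linearizing $\cR^\times$ maps equivalently to $\cR$. I would proceed in three steps.

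First, I would exhibit a comparison functor. The natural inclusion $\cR^\times \hookrightarrow \cR$ is a braided monoidal functor from a braided monoidal $2$-groupoid with $\bC^\times$ $2$-morphisms into a braided fusion $2$-category. Since $\cR$ is $\bC$-linear, additive, and Karoubi complete, this inclusion factors (by the appropriate universal property of the three completion steps) through a braided monoidal functor $L \colon \operatorname{Lin}(\cR^\times) \to \cR$. The remaining task is to show $L$ is an equivalence.

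Second, I would check essential surjectivity. The explicit description already given shows that $\cR$ has exactly four simple objects $\one, c, m, m'$ with $m' = m \otimes c$. The invertible objects $\one$ and $m$ lie in $\cR^\times$ by construction, so they are in the image of $L$. The Cheshire string $c$ arises as a condensation of $\one$ corresponding to the algebra $\cO(\bZ_2) \in \Rep(\bZ_2) = \End_\cR(\one)$; but this algebra and its structure maps already lie in the Karoubi closure of the linearization of $\Aut_{\cR^\times}(\one) = \bZ_2$ equipped with its $\bC^\times$ $2$-morphisms, so $c$ is in the image. Monoidality then places $m' \cong m \otimes c$ in the image as well.

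Third, I would check full faithfulness on hom categories between indecomposables. For $X, Y$ in distinct components, both hom categories vanish. For $X = Y = \one$, we have $\hom_\cR(\one, \one) = \Rep(\bZ_2)$, which also agrees with the Karoubi completion of the linearized $\Aut_{\cR^\times}(\one) = \bZ_2$; the same calculation applies at $m$. The hom categories involving $c$ or $m'$ are then determined by composing with the condensations constructed in the previous step.

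The main obstacle is verifying that no braided monoidal data is lost when passing to $\cR^\times$. Concretely, one must match the $E_3$-structure on the Postnikov tower $\bC^\times[2].\bZ_2[1].\bZ_2[0]$ underlying $\cR^\times$ against that of $\cR$. This is a cohomological check: the relevant $k$-invariants live in low-degree $\bZ_2$-group cohomology, and the lowest of them (controlling the associator of $\Rep(\bZ_2)$) vanishes because $\H^3(\bZ_2[1]; \bC^\times) = 0$, while the symmetric braiding on $\Omega\cR = \Rep(\bZ_2)$ is fixed to be the Tannakian one rather than the $\SVec$ one by the hypothesis on $\Omega\cR$. The higher coherence $k$-invariants are pinned down by comparing the Drinfel'd centre description $\cR \cong \cZ_{(1)}((\Sigma\Vec)[\bZ_2])$ from Remark~\ref{drinfeldcentres} against the expected cohomology of the tower, which in this case is easily seen to be trivial in all relevant degrees.
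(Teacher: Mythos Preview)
Your overall strategy---build a comparison functor via the universal property of linearization, then check essential surjectivity and full faithfulness---is essentially what the paper does, though the paper's proof is much terser: it simply observes that the top two layers $\bC^\times[2].\bZ_2[1]$ linearize to $\Rep(\bZ_2)$, that Karoubi completion then manufactures the Cheshire string $c$, and that the bottom $\bZ_2[0]$ layer supplies the second (magnetic) component. Your steps 1--3 unpack this same reasoning more explicitly, which is fine.

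There is one genuine slip. You assert that the associator $k$-invariant vanishes ``because $\H^3(\bZ_2[1]; \bC^\times) = 0$.'' This is false: $\H^3(\bZ_2[1]; \bC^\times) = \H^3_{\mathrm{gp}}(\bZ_2; \bC^\times) \cong \bZ_2$, corresponding to the two monoidal structures on $\Vec[\bZ_2]$. The correct statement (which the paper makes just before the Lemma) is that the particular class representing the associator of $\Rep(\bZ_2)$ is the trivial element of this $\bZ_2$; the group itself is nonzero.

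Your step 4 is also unnecessary and somewhat muddled. Once step 1 produces a \emph{braided monoidal} functor $L : \operatorname{Lin}(\cR^\times) \to \cR$---and it does, because each of the three completion operations is functorial for braided monoidal structures---then checking that $L$ is an equivalence on underlying $2$-categories (your steps 2--3) already makes it a braided monoidal equivalence. There is no separate obligation to match $k$-invariants by hand, and invoking the Drinfel'd centre description to ``pin them down'' is circular: the point of the Lemma is precisely to justify the subsequent cohomological analysis of the braiding, not the other way around. You should simply drop step 4.
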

\begin{proof}
  The Lemma essentially follows from the earlier discussion about $\Sigma\Rep(\bZ_2)$. To understand the details, it helps to work in layers. The top two layers of $\cR^\times$ are $\bC^\times[2].\bZ_2[1]$, which linearize to the 1-category $\Rep(\bZ_2)$. To produce a 2-category requires further Karoubi completion, and this is how the Cheshire string $c$ is created. The bottom layer $\bZ_2[0]$ means that there is a second equivalent component.
\end{proof}

It follows that to describe the braided monoidal structure on $\cB$, it suffices to describe the braided monoidal structure on $\cR^\times$. This is in turn described by its double classifying space $\rB^2\cR^\times$, which has the same homotopy groups shifted by 2:
\begin{equation}\label{extensionR}
  \rB^2\cR^\times =  \bC^\times[4].\bZ_2[3].\bZ_2[2].
\end{equation}

There are many spaces with these homotopy groups. To find the right one, it helps to first study the quotient extension $\bZ_2[3].\bZ_2[2] = \{1,e\}[3].\{\one,m\}[2]$, which is classified by a class $\beta \in \H^4(\bZ_2[2]; \bZ_2) \cong \bZ_2$. In terms of  braided category theory, this class records the self-braiding
$$ \beta_{m,m} : m\otimes m \to m\otimes m.$$
Specifically, $\beta$ is trivial or not depending on whether this self-braiding is $1$ or $e$. Using either description from Remark~\ref{drinfeldcentres}, it is not hard to show that $\beta_{m,m}$ is trivial, and so the sub-extension $\bZ_2[3].\bZ_2[2]$ is isomorphic to a product $\bZ_2[3] \times \bZ_2[2]$.

To complete the description (\ref{extensionR}) of $\cR$, it therefore suffices to give a class
\begin{equation}\label{equation.alpha}
 \alpha \in \H^5(\bZ_2[3] \times \bZ_2[2]; \bC^\times) = \H^5(\{1,e\}[3] \times \{\one,m\}[2];\bC^\times).
\end{equation}
Using the K\"unneth and Hurewicz theorems, one finds:
\begin{equation}\label{Kunneth}
 \H^5(\bZ_2[3] \times \bZ_2[2]; \bC^\times) \cong \underbrace{\H^5(\bZ_2[3]; \bC^\times)}_{\cong \bZ_2} \times \underbrace{\H^3(\bZ_2[3];\bC^\times) \otimes \H^2(\bZ_2[2];\bC^\times)}_{\cong \bZ_2\otimes\bZ_2 \cong \bZ_2} \times \underbrace{\H^5(\bZ_2[2];\bC^\times)}_{\cong \bZ_2}.
\end{equation}
(Hurewicz implies that the ``extension'' classes from K\"unneth vanish in this degree.)

It is useful to give names to the generators on the right-hand side of (\ref{Kunneth}). Recall that the $\bZ_2$-cohomology ring $\H^\bullet(\bZ_2[n]; \bZ_2)$ is freely generated over the Steenrod algebra by a single degree-$n$ generator. I will write ``$E$'' for the degree-$3$ generator of $\H^\bullet(\{1,e\}[3]; \bZ_2)$, and ``$M$'' for the degree-$2$ generator of  $\H^\bullet(\{1,m\}[2]; \bZ_2)$. Then for example the $\bZ_2$-cohomology of $\{1,e\}[3] \times \{1,m\}[2]$ is generated over the Steenrod algebra by $E$ and $M$.
There is a map $x \mapsto (-1)^x$ from $\bZ_2$-cohomology to $\bC^\times$-cohomology. In general, not every $\bC^\times$-cohomology class is in the image of this map: for example, $\H^4(\bZ_2[2]; \bC^\times) \cong \bZ_4$, and so any map from $\bZ_2$-cohomology cannot by surjective. But it turns out that the map $\H^5(\{1,e\}[3] \times \{1,m\}[2]; \bZ_2) \to \H^5(\{1,e\}[3] \times \{1,m\}[2]; \bC^\times)$ is surjective (although not injective). 
Specifically, the three $\bZ_2$s on the right-hand side of (\ref{Kunneth}) are generated by the classes
\begin{equation}\label{classnames}
(-1)^{\Sq^2 E}, \quad (-1)^{EM}, \quad \text{and} \quad (-1)^{\Sq^2 \Sq^1 M} = (-1)^{M \Sq^1 M}.
\end{equation}

What product $\alpha(E,M)$ of these classes describes the braiding on $\cR^\times$ (and hence on $\cR$)? The restriction $\alpha|_{M=0}$ to $\{1,e\}[3]$ describes the braiding on $\Omega\cR = \Rep(\bZ_2)$. The statement that $e$ is a boson is the statement that this braiding is trivial. Thus $\alpha|_{M=0} = +1$, and the term $(-1)^{\Sq^2 E}$ does not appear in $\alpha(E,M)$.
On the other hand, the term $(-1)^{EM}$ must appear in $\alpha$, since it is what encodes that the $e$-particle and the $m$-string detect each other.
What about the third term $(-1)^{\Sq^2 \Sq^1 M} = (-1)^{M \Sq^1 M}$? The answer is that it doesn't matter:
\begin{lemma}\label{lemma.autoboson}
  With notation as above, the classes $(-1)^{EM}$ and $(-1)^{EM + M \Sq^1 M}$ lead to equivalent braided fusion $2$-categories.
\end{lemma}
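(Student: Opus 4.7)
The plan is to exhibit an auto-equivalence of the truncated Postnikov tower $\bZ_2[3] \times \bZ_2[2]$ whose pullback on $\H^5(-;\bC^\times)$ sends the class $(-1)^{EM}$ to $(-1)^{EM + M\Sq^1 M}$. Such an auto-equivalence realizes the two $4$-types classified by these Postnikov invariants as equivalent, and by the linearization procedure of Lemma~\ref{lemma.Rlinearization}, this gives an equivalence of the associated braided fusion $2$-categories.

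The key observation is that self-equivalences of the infinite loop space $\bZ_2[3] \times \bZ_2[2]$ are parametrized, up to homotopy, by matrices of stable cohomology operations on the generators $E$ (degree $3$) and $M$ (degree $2$). For an auto-equivalence, the two diagonal entries must be identities; the entry $E \to M$ vanishes because there is no stable cohomology operation lowering degree; and the entry $M \to E$ is a stable cohomology operation from degree $2$ to degree $3$, whose only nonzero option is $\Sq^1$. I would therefore take the automorphism $\phi$ defined by $\phi^* M = M$ and $\phi^* E = E + \Sq^1 M$, and compute directly in the cohomology ring: $\phi^*(EM) = (E + \Sq^1 M)\cdot M = EM + M\Sq^1 M$. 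This exhibits the two Postnikov classes as related by the action of $\phi$ on $\H^5(\bZ_2[3] \times \bZ_2[2]; \bC^\times)$.

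The main thing to confirm is that this homotopy-theoretic automorphism really does correspond to an auto-equivalence of the braided fusion $2$-category, and not merely of its underlying $3$-type. This follows from the reconstruction picture already invoked in the paper: a braided fusion $2$-category with invertible objects whose Postnikov profile matches $\cR^\times$ is determined up to equivalence by the corresponding $4$-truncated double classifying space, so auto-equivalences on the space side produce auto-equivalences on the $2$-category side. Since $\phi$ acts trivially on each homotopy group, it preserves the fusion and associativity data on invertible objects, $1$-morphisms, and $2$-morphisms individually; its only effect is to reshuffle the top Postnikov datum, which is precisely what is needed to identify the two candidate $\alpha$'s. The one subtle point to double-check is that $\phi$ is truly an infinite-loop-space map rather than only a map of spaces, but this is automatic since $\Sq^1$ is a stable cohomology operation.
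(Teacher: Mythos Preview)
Your proposal is correct and follows essentially the same approach as the paper: both use the automorphism of $\bZ_2[3]\times\bZ_2[2]$ given by $(E,M)\mapsto(E+\Sq^1 M,\,M)$ and compute that pulling back $(-1)^{EM}$ yields $(-1)^{EM+M\Sq^1 M}$. The paper's version is slightly terser, simply noting that $\pi_0\Aut(\bZ_2[3]\times\bZ_2[2])\cong\bZ_2$ with this as the nontrivial element, while you spell out more of the surrounding justification; but the argument is the same.
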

\begin{proof}
  A straightforward calculation shows that $\pi_0 \Aut(\bZ_2[3] \times \bZ_2[2]) \cong \bZ_2$. The nontrivial automorphism can be understood in coordinates as
  $$ (E,M) \mapsto (E',M') = (E + \Sq^1 M, M).$$
  In words, this is the automorphism of $\bZ_2[3] \times \bZ_2[2]$ which restricts to the identity on $\bZ_2[3]$ and projects to the identity on $\bZ_2[2]$, but uses the nontrivial class in $\H^3(\bZ_2[2]; \bZ_2)$ to choose a different splitting.
  Pulling back $(-1)^{EM}$ along this automorphism gives
$ 
(-1)^{E'M'} = (-1)^{(E + \Sq^1 M)M}
$, and so this automorphism exchanges the two classes.
\end{proof}

\begin{remark}\label{remark.bos.SS}
Lemma~\ref{lemma.autoboson} could be understood as ``experimental confirmation'' of the uniqueness of the 3+1D topological order $\cR$ with $\Omega\cR = \Rep(\bZ_2)$. As explained 
in Remark~\ref{remark.LKW}, this uniqueness follows directly from the general classification given in \cite{PhysRevX.8.021074}.

  Suppose that this general classification was unavailable, but that there was a direct proof that $\pi_0\cR = \bZ_2$ and that $\cR$ contained an invertible magnetic string $m$. Then $\cR$ would arise as an extension of shape ``$\Sigma\Rep(\bZ_2).\bZ_2^F$,'' where the notation is supposed to indicate that this is not a central extension, but rather $\pi_0\cR = \{\one,m\}$ acts on the identity component as the 1-form symmetry $(-1)^F$. After remembering that $m$ does indeed source a 1-form symmetry, one sees that extensions of this shape are the same as extensions of shape $(\bC^\times[4] \times \bZ_2[3]).\bZ_2^F[2]$, where $\bC^\times[4] \times \bZ_2[3]$ arises as a classifying space of the braided monoidal 2-groupoid $\Sigma\Rep(\bZ_2)^\times$. The symmetric monoidal structure on $\Rep(\bZ_2)$ makes it natural to think of $\bC^\times[4] \times \bZ_2[3]$ as the degree-4 term in the spectrum for the cohomology theory $\H^\bullet(-;\bC^\times \times \bZ_2[-1]) := \H^\bullet(-;\bC^\times) \times \H^{\bullet - 1}(-;\bZ_2)$. Extensions of shape $(\bC^\times[4] \times \bZ_2[3]).\bZ_2^F[2]$ are then classified by a \define{twisted} cohomology group $\H^5(\bZ_2^F[2]; \bC^\times \times \bZ_2[-1])$, where the twisting encodes that $\bZ_2^F$ is supposed to act nontrivially on $\bC^\times[4] \times \bZ_2[3]$ and is recorded in the notation by the $f$ superscript.
  
  There is an (easy!)\ Atiyah--Hirzebruch spectral sequence computing this twisted cohomology. The $E_2$ page has two rows:
  $$\left\{\begin{matrix} E_2^{\bullet,1} = \H^\bullet(\bZ_2[2];\bZ_2) \\ E_2^{\bullet,0} = \H^\bullet(\bZ_2[2];\bC^\times) \end{matrix}\right\} \Rightarrow \H^\bullet(\bZ_2^F[2]; \bC^\times \times \bZ_2[-1]).$$
  As above, write $M$ for the degree-2 generator of $\H^\bullet(\bZ_2[2];\bZ_2)$. Then the only differential in this spectral sequence is
  $$ \d_2 : \H^\bullet(\bZ_2[2];\bZ_2) \to \H^{\bullet+2}(\bZ_2[2];\bC^\times), \quad X \mapsto (-1)^{XM}.$$
  The unique nonzero class $(-1)^{M\Sq^1 M} = (-1)^{\Sq^2 \Sq^1 M} \in \H^5(\bZ_2[2]; \bC^\times)$ is in the image of this differential, and the unique nonzero class $M^2 = \Sq^2 M \in \H^4(\bZ_2[2]; \bZ_2)$ is not in the kernel of this differential, and so:
  $$ \H^5(\bZ_2^F[2]; \bC^\times \times \bZ_2[-1]) = 0.$$
  This is another way to package the earlier calculations, and establishes directly the uniqueness of $\cR$ subject to the existence of an invertible magnetic string.
\end{remark}

\subsection{The two topological orders in which \texorpdfstring{$e$}{e} is a fermion} \label{subsec.fer}

I now turn to the case of topological orders $\cB$ with $\Omega\cB \cong \SVec$. As in \S\ref{subsec.bos}, the first step is to understand the identity component $\Sigma\SVec$. Since $\SVec$ and $\Rep(\bZ_2)$ are monoidally equivalent, $\Sigma\SVec$ and $\Sigma\Rep(\bZ_2)$ are equivalent as semisimple 2-categories. In particular, $\Sigma\SVec$ contains two simple objects (up to equivalence): the identity object $\one$ and a Cheshire string $c$.

The first difference between the emergent-boson and emergent-fermion cases shows up in the fusion rules for the Cheshire string. In the bosonic case, the Cheshire string satisfied $c^2 \cong c \oplus c$~(\ref{fusionrule-cheshire}). When $e$ is a fermion, however, the Cheshire string corresponds to the simple associative superalgebra $\Cliff(1) \in \SVec$. The tensor product of algebra objects depends on the braiding in the ambient category. As superalgebras, there is a well-known isomorphism~\cite{MR167498}
$$ \Cliff(1) \otimes \Cliff(1) = \Cliff(2) \cong \operatorname{Mat}(1|1) \simeq \bC.$$
Here $\operatorname{Mat}(1|1)$ means the matrix algebra of endomorphisms of $\bC^{1|1} \in \SVec$, and $\simeq$ denotes a super Morita equivalence. As such, in $\Sigma\SVec$ the Cheshire string satisfies the fusion rule:
\begin{equation} \label{fusionrule-cheshire-fermionic}
  c^2 \cong \one.
\end{equation}

Since $c$ is invertible, its self-braiding $\beta_{c,c} : c\otimes c \to c\otimes c$ is either the identity or is (the identity times) the fermion $e$.  The latter holds in fact holds:
\begin{equation}\label{self-braiding-c}
  \beta_{c,c} \cong e : c \otimes c \to c\otimes c
\end{equation}
 The self-braiding of the fermion $\beta_{e,e}$ is also nontrivial. In other words, the braided monoidal groupoid $\Sigma\SVec^\times$ corresponds to a space 
$$ \rB^2\Sigma\SVec^\times = \bC^\times[4].\bZ_2[3].\bZ_2[2] = \bC^\times[4].\{1,e\}[3].\{\one,c\}[2]$$
with both Postnikov k-invariants nontrivial:
$$ \Sq^2 : \{\one,c\} \to \{1,e\}, \qquad (-1)^{\Sq^2} : \{1,e\} \to \bC^\times.$$
These Postnikov invariants are stable --- just given by Steenrod operations --- because $\Sigma\SVec$ is  symmetric monoidal and not just braided monoidal. 
Indeed, as a symmetric monoidal object, $\Sigma\SVec^\times$ corresponds to the ``extended supercohomology'' $\SH^\bullet$ of \cite{WangGu2017}. (My degree conventions are that $\SH^n(-)$ classifies maps into $\bC^\times[n].\bZ_2[n-1].\bZ_2[n-1]$.)
A priori to determine a three-level extension like this requires more than just the k-invariants, but in the $\SH^\bullet$ case the two possible extensions with these k-invariants are isomorphic \cite[\S5.4]{MR3978827}.

To extend $\Sigma\SVec$ to a topological order requires adding in a magnetic component --- for now, I~will take for granted the expectation (\ref{expectation1}) that $\pi_0\cB = \bZ_2$,  so there is a single magnetic component. Choose a simple magnetic string $m$ in this component. 
 By \cite[Theorem~B]{JFYu}, any choice $m$ is invertible, and the other simple magnetic string is $m' = m\otimes c$. Since $m$ is invertible, its self-braiding $\beta_{m,m} : m\otimes m \to m\otimes m$ is given by an invertible particle $x \in \{1,e\}$. The self-braiding on $m^2$ is then given by $x^4$. In particular, equation (\ref{self-braiding-c}) is inconsistent with $m^2 \cong c$, and so:
 \begin{equation} \label{fusionrule-m-fermionic}
  m^2 \cong \one.
  \end{equation}

\begin{remark}
  Unlike the case considered in Lemma~\ref{lemma.Rlinearization}, $\Sigma\SVec$ is \emph{not} the linearization of its invertible subgroupoid $\Sigma\SVec^\times$. Rather, $\Sigma\SVec$ is already the linearization of just its sub-groupoid $\bC^\times[2].\{1,e\}[1]$, since $\SVec$ is the linearization of $\bC^\times[1].\{1,e\}[0]$. 
  If one linearized all of $\Sigma\SVec^\times$, one would get a 2-category with two components.
  Similarly, the full topological order $\cB$ is not the linearization of its invertible subgroupoid $\cB^\times$, but just of $\bC^\times[2].\{1,e\}[1].\{\one,m\}[0]$.
\end{remark}

Thus to classify the topological orders $\cB$ with $\Omega\cB \cong \SVec$, one must simply work out the possible extensions $\bC^\times[4].\{1,e\}[3].\{\one,m\}[2]$ consistent with the braiding requirements. The first question to work out is the quotient extension $\{1,e\}[3].\{\one,m\}[2]$, which classifies the self-braiding $\beta_{m,m} : m \otimes m \to m\otimes m$ up to isomorphism. In Proposition~\ref{prop.magbos}, I will prove that the self-braiding of any magnetic string is necessarily trivial, and so $\{1,e\}[3].\{\one,m\}[2]$ is (noncanonically) a product. Then, as in equation (\ref{equation.alpha}) and the surrounding discussion, to determine the topological order it suffices to give a class
$$\alpha(E,M) \in \H^5(\{1,e\}[3] \times \{\one,m\}[2]; \bC^\times) = \{(\pm 1)^{\Sq^2 E}\} \times \{(\pm 1)^{EM}\} \times \{(\pm 1)^{\Sq^2 \Sq^1 M}\}.$$

The statement that $e$ is a fermion means that $\alpha|_{M=0}$ should be nontrivial, 
and so the term $(- 1)^{\Sq^2 E}$ is present. The term $(-1)^{EM}$ is also present. As in the case of $\cR$,  the nontrivial coefficient on $EM$ is what enforces remote detectability: it says that an $e$-particle and an $m$-string link nontrivially. This leaves two choices for $\alpha$:
\begin{align}
  \cS &\quad\leftrightarrow\quad \alpha(E,M) = (-1)^{\Sq^2 E + EM}, \label{S-braiding} \\
  \cT &\quad\leftrightarrow\quad \alpha(E,M) = (-1)^{\Sq^2 E + EM + \Sq^2 \Sq^1 M}. \label{T-braiding}
\end{align}

It remains to prove that, unlike the case considered in Lemma~\ref{lemma.autoboson}, these two choices really do lead to inequivalent topological orders. First, one could wonder about the automorphism $(E,M) \mapsto (E',M') = (E + \Sq^1 M, M)$ used in that Lemma, which corresponded to choosing a different splitting of the extension $\{1,e\}.\{\one,m\}$. Pulling back the class in (\ref{S-braiding}) along this automorphism, one calculates:
$$ (-1)^{\Sq^2 E' + E'M'} = (-1)^{\Sq^2(E + \Sq^1 M) + (E + \Sq^1 M)M} = (-1)^{\Sq^2 E + \Sq^2 \Sq^1 M + EM + M \Sq^1 M}$$
by the linearity of $\Sq^2$. But $(-1)^{M \Sq^1 M} = (-1)^{\Sq^2 \Sq^1 M}$ as classes in $\H^5(\bZ_2[2]; \bC^\times)$, and so one ends up back to $(-1)^{\Sq^2 E + EM}$. The same calculation shows that class in (\ref{T-braiding}) is also fixed by $(E,M) \mapsto (E',M') = (E + \Sq^1 M, M)$.

The other possible worry is whether the choice of magnetic string matters: perhaps $\cS$ and $\cT$ are exchanged by switching $m \mapsto m' = m \otimes c$. 
In fact, this does not happen. Indeed, I will show in Proposition~\ref{prop.mswitch} that $\cS$ has an automorphism that switches $m$ with $m'$, and so $\cS$ and $\cT$ cannot be exchanged by such a switch (since the previous paragraph already establishes that $\cS$ and $\cT$ are not exchanged by any automorphism that fixes $m$). Thus $\cT$ also has an automorphism switching $m$ with $m'$, and the two topological orders truly are inequivalent.

\begin{remark}\label{remark.spinNeumann}
The topological order $\cS$ is achieved physically by replacing the $\bZ_2$ gauge field in \S\ref{subsec.bos} with a spin structure, to produce the TFT that counts spin structures. 
As explained in Remark~\ref{drinfeldcentres}, any boundary condition for a topological order realizes that topological order as a Drinfel'd centre. There is no sensible Dirichlet boundary condition for spin-$\bZ_2$ gauge theory because there is no ``trivial'' spin structure and so no sensible meaning to the sentence ``the spin-structure trivializes at the boundary.'' However, there is a Neumann boundary condition for spin-$\bZ_2$ gauge theory in which the spin structure is arbitrary on the boundary. 
This boundary condition corresponds to realizing $\cS$ as
$$ \cS \cong \cZ_{(1)}(\Sigma\SVec).$$

The topological order $\cT$ is achieved by the anomalous spin-$\bZ_2$ gauge theory studied in \cite[Section~5]{MR3321301} and \cite[Section III.J.4]{KLWZZ2}. I do not expect that it has a description as a Drinfel'd centre; compare Remarks~\ref{remark.MME} and \ref{remark.MMEredux}.
\end{remark}

\begin{remark}\label{inthewild}
  Suppose you meet a topological order $\cB$ in the wild, and check that $\Omega\cB \cong \SVec$. How do you tell whether $\cB \cong \cS$ or $\cT$? The answer can be read off from the extension $\alpha(E,M)$ in (\ref{S-braiding}--\ref{T-braiding}) as follows. First, choose a magnetic string $m$. Second, choose an isomorphism $m^2 \cong 1$: this is the choice of splitting $\{1,e\}.\{\one,m\} \cong \{1,e\}\times\{\one,m\}$. These two choices together allow you to find inside $\cB$ a braided sub-$2$-category whose only objects are $\{\one,m\}$, whose only 1-morphisms are identities, and whose only 2-morphisms are scalars. The braided monoidal structure on this sub-$2$-category is then described by the class $\alpha|_{E=0} \in \H^5(\bZ_2[2]; \bC^\times) $. If this class is trivial, then $\cB \cong \cS$, whereas if it is nontrivial, then $\cB \cong \cT$.
  
  Incidentally, the nontrivial choice $(-1)^{\Sq^2 \Sq^1 M} \in \H^5(\bZ_2[2]; \bC^\times) \cong \bZ_2$ is a stable class, meaning that this braided monoidal sub-2-category is in fact symmetric monoidal (in a unique way, since the desuspension map $\H^{k+3}(\bZ_2[k];\bC^\times) \to \H^5(\bZ_2[2];\bC^\times)$ is an isomorphism for $k\geq 2$).
\end{remark}

\begin{remark}\label{remark.monoidalST}
  Forgetting the braiding on $\cS$ or $\cT$, and remembering only the monoidal structure, corresponds to working with the extension $\bC^\times[3].(\{1,e\}[2] \times \{\one,m\}[1])$ classified by the desuspension of the class $\alpha(E,M)$ from (\ref{S-braiding}--\ref{T-braiding}). It is easy to desuspend stable classes like $(-1)^{\Sq^2 E}$ and $(-1)^{\Sq^2 \Sq^1 M}$: their formulas stay the same, just with $E$ and $M$ now interpreted as the degree-$2$ and degree-$1$ generators of $\H^\bullet(\{1,e\}[2];\bZ_2)$ and $\H^\bullet(\{\one,m\}[1];\bZ_2)$, respectively. But $\H^4(\bZ_2[1];\bC^\times)$ is trivial, and indeed $(-1)^{\Sq^2 \Sq^1 M} = (-1)^{M^4} = +1$, since $M$ has now degree $1$. The class $(-1)^{EM}$ also desuspends to the trivial class. All together, one finds that $\cS$ and $\cT$ are monoidally, but not braided monoidally, equivalent, and both are furthermore monoidally equivalent to the symmetric monoidal fusion 2-category $(\Sigma\SVec) [\bZ_2]$.
\end{remark}

\begin{remark}\label{remark.fer.SS}
  As in Remark~\ref{remark.bos.SS}, there is a more direct spectral-sequence-based proof of the classification of topological orders $\cB$ with $\Omega\cB \cong \cat{SVec}$, subject only to the assumption that $\pi_0\cB = \bZ_2$. First, by \cite[Theorem~B]{JFYu}, every indecomposable magnetic string is invertible. Thus the question is to classify braided extensions of shape ``$(\Sigma\SVec).\bZ_2^F$,'' where the superscript $f$ indicates that the fibre $\Sigma\SVec$ is not central, but rather transforms nontrivially under the 1-form symmetry group $\bZ_2^F = \pi_0\cB$. Since $(\Sigma\SVec)^\times$ classifies supercohomology $\SH^\bullet$, extensions of this shape are classified by the ``twisted supercohomology'' group $\SH^{5}(\bZ_2^F[2])$.
  
  There is an Atiyah--Hirzebruch spectral sequence computing this twisted cohomology group:
  $$ \H^i(\bZ_2^F[2]; \SH^j(\pt)) \Rightarrow \SH^{i+j}(\bZ_2^F[2]).$$
  Here $\SH^0(\pt) = \bC^\times$, $\SH^1(\pt) = \bZ_2$, $\SH^2(\pt) = \bZ_2$, and all other rows vanish. The twisting does not affect the $E_2$ page (because a 1-form symmetry cannot act on an ordinary group like $\SH^\bullet(\pt)$), but rather appears in the differentials. The $\d_2$ differentials are
  \begin{align*}
     \d_2 & : E_2^{i,2} = \H^i(\bZ_2^F[2]; \bZ_2) \to E_2^{i+2,1} = \H^{i+2}(\bZ_2^F[2]; \bZ_2) && X \mapsto \Sq^2 X + MX \\
     \d_2 & : E_2^{i,1} = \H^i(\bZ_2^F[2]; \bZ_2) \to E_2^{i+2,0} = \H^{i+2}(\bZ_2^F[2]; \bC^\times) && X \mapsto (-1)^{\Sq^2 X + MX}
  \end{align*}
  The $E_2$ page with nontrivial differentials drawn in looks like:
  $$
\begin{tikzpicture}[anchor=base]
\path
(0,0) node {$\bC^\times$} ++(.75,0) node {$0$} ++(.75,0) node (b2) {$\bZ_2$} ++(.75,0) node {$0$} ++(.75,0) node {$\bZ_4$} ++(.75,0) node {$\bZ_2$}++(.75,0) node (e2) {$\cdots$}
(0,.5) node (b1) {$\bZ_2$} ++(.75,0) node {$0$} ++(.75,0) node (a2) {$\bZ_2$} ++(.75,0) node {$\bZ_2$} ++(.75,0) node (e1) {$\bZ_2$} ++(.75,0) node (c2) {$\bZ_2^2$} ++(.75,0) node (d2) {$\cdots$}
(0,1) node (a1) {$\bZ_2$} ++(.75,0) node {$0$} ++(.75,0) node {$\bZ_2$} ++(.75,0) node (c1) {$\bZ_2$} ++(.75,0) node (d1) {$\bZ_2$} ++(.75,0) node {$\cdots$} 
;
\draw[->] (-.75,-.125) -- ++(5.25,0);
\draw[->] (-.4,-.5) -- ++(0,2);
\path  (0,-.5) node {$0$} ++(.75,0) node {$1$} ++(.75,0) node {$2$} ++(.75,0) node {$3$} ++(.75,0) node {$4$} ++(.75,0) node {$5$} ++(.75,0) node {$i$}
(-.75,0) node {$0$} ++(0,.5) node {$1$} ++(0,.5) node {$2$} ++(0,.5) node {$j$}
;
\draw[thick] 
(a1.mid) -- (a2.mid)
(b1.mid) -- (b2.mid)
(c1.mid) -- (c2.mid)
(d1.mid) -- (d2.mid)
(e1.mid) -- (e2.mid)
;
\path (b1) ++ (-2,0) node {$E_2^{i,j} =$};
\end{tikzpicture}
$$
This is a truncation to degrees $j \leq 2$ of the spectral sequence considered in \S\ref{subsec.proof3}. In particular, Lemma~\ref{lemma.nod3} implies that the limit of this spectral sequence in total degree 4 is 
\begin{equation}\label{eqn.16-fold-way}
\SH^4(\bZ_2^F[1]) \cong \bZ_{16}.
\end{equation}
Indeed, this group is identifiable with the kernel of the map $\cW \to \rS\cW$ from the ``Witt group'' of (Morita equivalence classes of) nondegenerate braided fusion 1-categories to the ``super Witt group'' of slightly-degenerate braided fusion 1-categories, which is shown to be a $\bZ_{16}$ in \cite[5.3]{MR3022755}. I will give another description of this isomorphism in Remark~\ref{rem.MMEWitt}.

It follows that all the entries in total degree $4$ survive to the $E_\infty$ page, and so the $d_3$ differential vanishes in this range of degrees. Therefore the $\bZ_2$ in bidegree $(5,0)$ survives to $E_\infty$, and:
\begin{equation} \SH^5(\bZ_2^F[2]) \cong \bZ_2.\end{equation}
Thus there are two topological orders $\cB$ with $\Omega\cB = \SVec$ and $\pi_0\cB = \bZ_2$, and they are distinguished by a class in ordinary cohomology $\H^5(\bZ_2[2];\bC^\times)$, reproducing Remark~\ref{inthewild}.

The fact that the $\bZ_2$ in bidegree $(3,2)$ supports a nontrivial $\d_2$ differential is ``the same'' as the fact (\ref{fusionrule-m-fermionic}) that $m^2 \cong \one$ and not $c$. The fact that the $\bZ_2$ in bidegree $(4,1)$ supports a nontrivial $\d_2$ differential is ``the same'' as the fact that the self-braiding $\beta_{m,m}$ is necessarily $1$, which I will prove directly in Proposition~\ref{prop.magbos}.
\end{remark}

\subsection{Automorphisms of \texorpdfstring{$\cR$}{R}, \texorpdfstring{$\cS$}{S}, and \texorpdfstring{$\cT$}{T}}\label{subsec.autos}

To  fully understand a topological order, it is interesting to work out its symmetry group. The following language has become fairly standard in the physics literature, following \cite{MR3321281}:

\begin{definition}
  The \define{$p$-form symmetries} of a higher-categorical object $\cX$ are $\pi_p \Aut(\cX)$.
\end{definition}

Let $\cA$ be the fusion $n$-category of all operators (including higher-dimensional membrane operators) in an $n$+1D topological order. It is almost true that $\Aut(\cA) = \cA^\times$ is the monoidal sub-3-groupoid of invertible operators. The reason this fails is that of course the centre of $\cA$ acts trivially. The ``remote detectability'' axiom for a general topological order requires that this centre is as trivial as possible:
$$ \cZ_{(1)}(\cA) = \Sigma^{n-1}\Vec = n\Vec.$$
The correct general statement asserts that $\Aut(\cA)$ is the quotient of $\cA^\times$ by $\cZ_{(1)}(\cA)^\times = (\Sigma^{n-1}\Vec)^\times$:
\begin{proposition}\label{prop.topologicalNoether}
  Let $\cA$ be an $n$+1D topological order in the sense of \cite{me-TopologicalOrders}, i.e.\ a multifusion $n$-category with trivial Drinfel'd centre. Then there is a fibre sequence
  $$ \rB \cA^\times \to \rB \Aut(\cA) \overset \kappa \to \rB(\Sigma^n\Vec)^\times. $$
\end{proposition}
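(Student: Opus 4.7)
The plan is to extract the claimed fibre sequence as the one-fold delooping, followed by one rotation, of a short fibre sequence
$$(\Sigma^{n-1}\Vec)^\times \longrightarrow \cA^\times \overset{c}{\longrightarrow} \Aut(\cA)$$
of $\bE_1$-groupoids. Here $c$ is the conjugation map $X \mapsto X \otimes (-) \otimes X^\vee$ sending an invertible object to the inner autoequivalence it generates. The identification between the two statements will use the canonical equivalence $\rB(\Sigma^{n-1}\Vec)^\times \cong (\Sigma^n\Vec)^\times$, which follows from the $\Sigma \dashv \Omega$ adjunction once one knows that every invertible object of $\Sigma^n\Vec$ is (non-canonically) equivalent to the unit.

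First I would construct the conjugation map $c$. Given an invertible object $X$ with a chosen dual $X^\vee$, the functor $c_X = X \otimes (-) \otimes X^\vee$ is a monoidal autoequivalence (the monoidal coherence coming from the evaluation/coevaluation data on $X$), and the tensor product of invertibles provides coherent isomorphisms $c_{X \otimes X'} \cong c_X \circ c_{X'}$. This assembles into a morphism of $\bE_1$-groupoids, as in the classical case of conjugation defining a homomorphism from a group into its own automorphism group.

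Next I would identify the fibre of $c$ over the identity autoequivalence. Unpacking the definition, this fibre consists of invertible objects $X \in \cA^\times$ equipped with a monoidal natural isomorphism $\id_\cA \cong c_X$, i.e.\ a family of half-braidings $Y \otimes X \cong X \otimes Y$ compatible with the tensor product in $Y$. This is exactly the data of a lift of $X$ to the Drinfel'd centre $\cZ_{(1)}(\cA)$, so the fibre is $\cZ_{(1)}(\cA)^\times$. The trivial-centre (remote detectability) hypothesis on $\cA$, namely $\cZ_{(1)}(\cA) \cong \Sigma^{n-1}\Vec$, then identifies the fibre as $(\Sigma^{n-1}\Vec)^\times$. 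Delooping and rotating the resulting short fibre sequence produces $\rB\cA^\times \to \rB\Aut(\cA) \to \rB^2(\Sigma^{n-1}\Vec)^\times \cong \rB(\Sigma^n\Vec)^\times$, with $\kappa$ as the connecting map; physically $\kappa$ will record the ``anomaly'' of a symmetry of $\cA$ as an invertible $(n{+}1)$-dimensional phase.

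The main obstacle will be coherence: $c$ should really be a morphism of $\bE_1$-groupoids rather than merely of pointed higher groupoids, and the monoidal-natural-isomorphism data witnessing an object of the fibre should match the half-braiding data in $\cZ_{(1)}(\cA)$ up to arbitrarily high homotopy. The cleanest way to avoid manually auditing these coherences is to adopt the definition of $\cZ_{(1)}(\cA)$ as the $n$-category of $\cA$-bimodule endomorphisms of $\cA$ mentioned in \S\ref{subsect.attempt}: with this definition, an object of the fibre of $c$ becomes tautologically the same data as an invertible central element, converting the identification from a verification into an instance of the definition. The only remaining input is then the identification $\rB(\Sigma^{n-1}\Vec)^\times \cong (\Sigma^n\Vec)^\times$, which repackages the classification of invertible higher vector spaces.
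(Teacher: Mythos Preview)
There is a genuine gap in the rotation step. A fibre sequence $F \to E \to B$ of pointed spaces extends canonically to the \emph{left} via Puppe, not to the right: knowing that $(\Sigma^{n-1}\Vec)^\times$ is the fibre of $c : \cA^\times \to \Aut(\cA)$ does not by itself produce a map $\rB\Aut(\cA) \to \rB^2(\Sigma^{n-1}\Vec)^\times$ with fibre $\rB\cA^\times$. For such a rotation you would need $c$ to be essentially surjective, i.e.\ every automorphism of $\cA$ to be inner. But this fails in the cases of interest: the paper's own long exact sequence~(\ref{nother-LES}) has a generally nontrivial map $\pi_0\Aut(\cB) \to \pi_0\W^4 = \cW$ measuring precisely the outer part. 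What your argument actually establishes is the weaker fibre sequence with $\rB\mathrm{Inn}(\cA)$ in place of $\rB\Aut(\cA)$, where $\mathrm{Inn}(\cA)$ denotes the image of $c$.

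Relatedly, your identification $\rB(\Sigma^{n-1}\Vec)^\times \cong (\Sigma^n\Vec)^\times$ is incorrect for $n \geq 3$: by Lemma~\ref{lemma.Wpt}, $\pi_0(\Sigma^3\Vec)^\times = \pi_0\W^4 = \cW$ is the (nontrivial, in fact infinite) Witt group, so not every invertible object of $\Sigma^n\Vec$ is equivalent to the unit. The two gaps are the same phenomenon viewed from opposite ends: the outer automorphisms of $\cA$ are exactly what land in the nontrivial components of $(\Sigma^n\Vec)^\times$. The paper supplies the missing ingredient by constructing $\kappa$ directly: it sends $f \in \Aut(\cA)$ to the twisted bimodule $\cA_f$, and uses that $\cA$ is Morita invertible (so that its category of unpointed bimodules is $\Sigma^n\Vec$) to interpret $\cA_f$ as a point of $(\Sigma^n\Vec)^\times$. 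Your conjugation map $c$ then reappears as the connecting map one step to the left, and your computation of its fibre is correct; what is missing is the construction of $\kappa$ itself.
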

This proposition should be thought of as a topological version of Noether's theorem. The spaces $(\Sigma^\bullet\Vec)^\times$ fit together into a loop spectrum: $\Omega((\Sigma^\bullet\Vec)^\times) = (\Sigma^{\bullet-1}\Vec)^\times$. Following \S\ref{subsec.proof3}, I will call (a cohomological shift of) this spectrum by the name ``$\W^\bullet$'':
\begin{equation} \label{eqn.nameW} \W^n := (\Sigma^{n-1}\Vec)^\times = (n\Vec)^\times. \end{equation}
Complete details, and an explanation of the name, can be found in \S\ref{subsec.proof3}. The extension $\cA^\times = (n\Vec)^\times.\Aut(\cA)$ has as its Postnikov class  $\kappa \in \W^{n+2}(\rB \Aut(\cA))$, which should be thought of as the \define{universal 't Hooft anomaly} for $\cA$.
\begin{proof}
  Any automorphism $f \in \Aut(\cA)$ of $\cA$ can be encoded as an invertible $\cA$-$\cA$ bimodule ``$\cA_f$'': it is $\cA$ as a left-module, but the right action is twisted by $f$. To remember $f$, one remembers the ``pointing'' $1_\cA \in \cA_f$. Since $\cA$ is a topological order, it is invertible in the Morita $(n{+}2)$-category of multifusion $n$-categories \cite[Theorem~2]{me-TopologicalOrders}, and so its monoidal $(n{+}1)$-category of (unpointed) bimodules is trivial, i.e.\ equivalent to $\Sigma^n\Vec$.
  In other words, the group map $\Aut(\cA) \to (\Sigma^n\Vec)^\times$ sending $f \mapsto \cA_f$, as an unpointed bimodule, has kernel the choice of pointing. But the choice of pointing is exactly an element of $\cA^\times$.
\end{proof}

In the case at hand, $n = 3$ and $\cA = \Sigma\cB$ for $\cB \in \{\cR,\cS,\cT\}$ --- $\cA$ has trivial Drinfel'd centre because $\cB$ has trivial M\"uger centre \cite[Corollary IV.2]{me-TopologicalOrders}.
In particular, $\Aut(\cA) = \Aut(\cB)$ and $\pi_i \cA^\times = \pi_{i-1} \cB^\times$ for $i \geq 1$.
Unfolding the fibre sequence from Proposition~\ref{prop.topologicalNoether} and applying the notation from (\ref{eqn.nameW}) produces a long exact sequence in homotopy groups:
\begin{multline}\label{nother-LES}
  1 \to \bC^\times \to \pi_2 \cB^\times \to \pi_3 \Aut(\cB) \to \pi_0 \W^1 \to \pi_1 \cB^\times \to \pi_2 \Aut(\cB) \to \pi_0 \W^2 
  \\
  \to \pi_0 \cB^\times \to \pi_1 \Aut(\cB) \to \pi_0 \W^3 \to \pi_0(\Sigma\cB)^\times \to \pi_0 \Aut(\cB) \to \pi_0 \W^4
\end{multline}
Very explicitly, $\pi_p \Aut(\cB)$ are the $p$-form symmetries of $\cB$, whereas $\pi_{p-1}\cB^\times$ are the invertible $(p-1)$-morphisms in $\cB$. It is not true in general that this sequence is exact on the far right: in the proof of Proposition~\ref{prop.topologicalNoether}, not every invertible bimodule needs to be isomorphic to one of the form $\cA_f$ for an automorphism $f$.

A few of the terms in (\ref{nother-LES}) are quick to fill in. Since $\cB$ is fusion, $\pi_2 \cB^\times = \bC$ and the first arrow is an isomorphism. As explained in Lemma~\ref{lemma.Wpt}, $\pi_0 \W^1$, $\pi_0 \W^2$, and $\pi_0 \W^3$ are all trivial.
Inspecting the descriptions of $\cR,\cS,\cT$ in \S\ref{subsec.bos} and \S\ref{subsec.fer} then gives:
\begin{corollary}\label{cor.autos}
  The 2-form symmetry groups of $\cR$, $\cS$, and $\cT$ are
  $$ \pi_2 \Aut(\cR) \cong \pi_2 \Aut(\cS) \cong \pi_2 \Aut(\cT) \cong \bZ_2 = \{1,e\}.$$
  The 1-form symmetry groups are
$$    \pi_1 \Aut(\cR) \cong \bZ_2 = \{\one,m\}, \qquad \pi_1 \Aut(\cS) \cong \pi_1 \Aut(\cT) \cong \bZ_2^2 = \{\one,c,m,m'\},
$$
  where in the latter case $m$ and $m'$ are the two magnetic strings. 
  Moreover, the extension $\pi_2.\pi_1$ is trivializable in the $\cR$-case, whereas in the case of $\cS$ and $\cT$ it is classified by the nontrivial braiding $\beta_{c,c} = e$.
  \qed
\end{corollary}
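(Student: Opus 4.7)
The first two sentences of the corollary follow from the long exact sequence (\ref{nother-LES}) together with the vanishings $\pi_0\W^1 = \pi_0\W^2 = \pi_0\W^3 = 0$ of Lemma~\ref{lemma.Wpt}. These vanishings collapse the relevant four-term subsequences to isomorphisms
\[ \pi_1 \cB^\times \isom \pi_2 \Aut(\cB), \qquad \pi_0 \cB^\times \isom \pi_1 \Aut(\cB). \]
The invertible particles of $\Omega\cB$ form $\{1,e\} = \bZ_2$ in all three cases. The invertible strings form $\{\one,m\} = \bZ_2$ for $\cR$ (by fusion rule (\ref{fusionrule-cheshire}), $c$ is not invertible, and hence neither is $m' = m\otimes c$), whereas for $\cS$ and $\cT$, the fusion rules (\ref{fusionrule-cheshire-fermionic}) and (\ref{fusionrule-m-fermionic}) make $c$, $m$, and $m'$ all invertible, producing $\{\one,c,m,m'\} = \bZ_2^2$.

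For the extension statement, the $2$-type truncation of $\Aut(\cB)$ is a $2$-group classified by a Postnikov $k$-invariant $\kappa \in \H^3(\pi_1\Aut(\cB);\, \pi_2\Aut(\cB))$. The plan is to argue that, under the isomorphisms above, $\kappa$ coincides with the Postnikov class of the $2$-truncation of the delooped braided groupoid $\rB\cB^\times$ --- equivalently, with the quadratic refinement $X \mapsto \beta_{X,X}$ of the braiding pairing on $\pi_0\cB^\times$ valued in $\pi_1\cB^\times$. The justification comes from Proposition~\ref{prop.topologicalNoether}: the fibre sequence there is a map of $E_1$-spaces, and by the previous paragraph it restricts to an isomorphism on the relevant $\pi_{\leq 2}$, so the $2$-group $k$-invariants are identified. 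Concretely, for an invertible string $X \in \pi_1 \Aut(\cB)$, the square of the $1$-form symmetry ``translate by $X$'' is canonically trivialized only up to the self-braiding $\beta_{X,X}$, which therefore represents the value of $\kappa$ on $X$.

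Granted this identification, the final sentence is immediate. For $\cR$, it was shown in \S\ref{subsec.bos} (using either description from Remark~\ref{drinfeldcentres}) that $\beta_{m,m}$ is trivial, so $\kappa = 0$ and the extension is trivializable. For $\cS$ and $\cT$, Proposition~\ref{prop.magbos} gives $\beta_{m,m} = 1$, while (\ref{self-braiding-c}) gives $\beta_{c,c} = e$, making the quadratic form on $\pi_0\cB^\times = \bZ_2^2$ nontrivial and classified precisely by $\beta_{c,c} = e$. The main obstacle in the argument is therefore the middle paragraph: rigorously extracting the abstract $2$-group $k$-invariant of $\Aut(\cB)$ from the braiding data on $\cB^\times$ via Proposition~\ref{prop.topologicalNoether}, and verifying that the ``conjugation-by-$X$'' interpretation above really reproduces $\beta_{X,X}$ after unpacking the loop-space structure on $\rB\cB^\times$.
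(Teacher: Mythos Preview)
Your argument matches the paper's: the corollary is treated there as immediate from the long exact sequence~(\ref{nother-LES}), Lemma~\ref{lemma.Wpt}, and the explicit descriptions of $\cB^\times$ in \S\ref{subsec.bos}--\ref{subsec.fer}; your unpacking of the extension statement is precisely what the paper leaves implicit.

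One clarification that dissolves your self-identified ``main obstacle'': you place $\kappa$ in $\H^3(\pi_1\Aut(\cB);\pi_2\Aut(\cB))$, which is where the \emph{monoidal} (equivalently, bare-space) Postnikov class of a connected 2-type lives, but then equate it with the quadratic form $X\mapsto\beta_{X,X}$, which is the \emph{braided} $k$-invariant living in $\H^4(\pi_1\Aut(\cB)[2];\pi_2\Aut(\cB))$. These are different groups, related only by a generally non-injective desuspension. The correct reading of ``the extension $\pi_2.\pi_1$'' in the corollary is as the braided 2-group $\Omega_e\Aut(\cB)$, 1-truncated: since $(\Sigma\cB)^\times \to \Aut(\cB)$ is a homomorphism of groups, looping at the identity produces an $E_2$-map $\cB^\times \to \Omega_e\Aut(\cB)$, and the $\W$-vanishing makes this an isomorphism on $\pi_0$ and $\pi_1$, hence an equivalence of 1-truncated braided 2-groups. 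The braided $k$-invariant of $\cB^\times$ is then exactly the self-braiding quadratic form, and your final paragraph goes through verbatim.
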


\begin{remark}\label{remark.anomaly2}
  As is clear from their explicit descriptions in \S\ref{subsec.fer},  the two braided fusion 2-categories $\cS$ and $\cT$ are almost isomorphic. They are monoidally isomorphic (see Remark~\ref{remark.monoidalST}), and most of their braiding data agrees: the only difference is a few $\pm1$s in the self-braidings of the magnetic strings. As a result, the automorphism groups $\Aut(\cS)$ and $\Aut(\cT)$ are isomorphic, and their actions on $\cS$ and $\cT$ are ``the same.'' The difference is only in the anomalies: one can summarize Remark~\ref{inthewild} by saying that in $\cS$ the 1-form $\bZ_2$-symmetry generated by $m$ is nonanomalous, whereas in $\cT$ is has a nontrivial 't Hooft anomaly.
\end{remark}

This leaves the zero-form symmetries. The triviality of $\pi_0\W^3$ gives a left-exact sequence
$$ 1 \to (\Sigma\cB)^\times \to \pi_0 \Aut(\cB) \to \pi_0\W^4.$$
It is worth explaining this sequence physically. The right-most term  is nothing but the Witt group $\cW$ of nondegenerate braided fusion categories studied in \cite{MR3039775,MR3022755}. The middle term is of course the group of 0-form automorphisms of $\cB$. Any 0-form automorphism can be realized as a gapped topological 2+1D $\cB$-$\cB$ interface aka membrane operator. The objects of~$\Sigma\cB$, on the other hand, are the membrane operators $X$ that can be built from condensing networks of surface operators, i.e.\ for which there is a condensation $\mathrm{Vac} \condense X$, where $\mathrm{Vac}$ denotes the trivial (aka invisible or vacuum) 2+1D operator. These are precisely the gapped topological 2+1D membrane operators that admit a gapped topological 1+1D boundary condition. 
Other invertible 2+1D membrane operators may only admit gapless boundary conditions. The map $\pi_0 \Aut(\cB) \to  \cW$ records the obstruction to gappability of the boundary by recording the equivalence class, modulo gapped 0+1D boundary interfaces, of any conformal boundary condition.

As this discussion illustrates, it will be hard to analyze $\pi_0\Aut(\cB)$ by directly computing $\Sigma\cB$. 
Fortunately, these groups are accessible by inspecting the spectral sequences in Remarks~\ref{remark.bos.SS} and~\ref{remark.fer.SS}. Indeed, it is a general feature of such cohomological computations that if the choices are parameterized by degree-$n$ cohomology, then the automorphisms (up to higher isomorphism) of a given choice are parameterized by degree-$(n{-}1)$ cohomology. As such, there are isomorphisms
  \begin{gather}
   \pi_0 \Aut(\cR) \cong \H^4(\bZ_2^F[2]; \bC^\times \times \bZ_2[-1]) \cong \bZ_2, \label{autR} \\
   \pi_0 \Aut(\cS) \cong \pi_0 \Aut(\cT) \cong \SH^4(\bZ_2^F[2]) \cong \bZ_{16}. \label{autS}
  \end{gather}

To finish my description of these topological orders, I will explain the groups $\pi_0\Aut(\cR)$ and $\pi_0\Aut(\cS)$ in detail. 

The former is very easy. By Lemma~\ref{lemma.Rlinearization}, an automorphism of $\cR$ is precisely an automorphism of $\bZ_2[3] \times \bZ_2[2] = \{1,e\}[3] \times \{\one,m\}[2]$ preserving $(-1)^{EM}$. As explained in the proof of Lemma~\ref{lemma.autoboson}, $\bZ_2[3] \times \bZ_2[2]$ has only one nontrivial automorphism, and it does not preserve $(-1)^{EM}$. This is not a contradiction, however:  one must preserve $(-1)^{EM}$ not just up to abstract isomorphism, but up to specified isomorphism. This
 requires extra data, valued in a torsor for cohomology of one degree lower. And indeed $\H^3(\bZ_2[3] \times \bZ_2[2]; \bC^\times)  \cong \bZ_2$, confirming the isomorphism (\ref{autR}).

To understand $\Aut(\cS)$, it will be convenient to work with a pair of closely-related symmetric monoidal 5-categories, which are built following the general constructions in \cite[Section 8]{JFS}. First is the \define{pointed Morita category} $\cat{Alg}_2(2\cat{KarCat}_\bC)$ with:
  \begin{description}
     \item[0-morphisms] Braided monoidal Karoubi-complete $\bC$-linear 2-categories.
     \item[1-morphisms] Monoidal Karoubi-complete $\bC$-linear 2-category bimodules.
     \item[2-morphisms] Pointed Karoubi-complete $\bC$-linear 2-category bimodules.
     \item[3-morphisms] Pointed functors of 2-category bimodules.
     \item[4-morphisms] Pointed natural transformations of bimodule functors.
     \item[5-morphisms] Pointed modifications of natural transformations.
  \end{description}
  Details of what it means for a braided fusion 2-category to act on a fusion 2-category may be found in \cite{2006.08022}.
  By ``pointed,'' I mean that the bimodules come equipped with distinguished objects, which are to be preserved by higher morphisms. The composition of 1- and 2-morphisms is a balanced tensor product. In order to match with order of composition, I will consider a bimodule $_\cA \cM _\cB$ to be a morphism $\cB \to \cA$.
  
Second is the \define{unpointed Morita category} $\cat{Mor}_2(2\cat{KarCat}_\bC)$ with:
  \begin{description}
     \item[0-morphisms] Braided monoidal Karoubi-complete $\bC$-linear 2-categories.
     \item[1-morphisms] Monoidal Karoubi-complete $\bC$-linear 2-category bimodules.
     \item[2-morphisms] Karoubi-complete $\bC$-linear 2-category bimodules.
     \item[3-morphisms] Functors of 2-category bimodules.
     \item[4-morphisms] Natural transformations of bimodule functors.
     \item[5-morphisms] Modifications of natural transformations.
  \end{description}
Given a braided monoidal Karoubi-complete $\bC$-linear 2-category $\cB$, I will write $[\cB]$ for the corresponding object in $\cat{Mor}_2(2\cat{KarCat}_\bC)$.
There is obviously a forgetful functor $\cat{Alg}_2(2\cat{KarCat}_\bC) \to \cat{Mor}_2(2\cat{KarCat}_\bC)$ which forgets the pointing data. 

Although seemingly very similar, these categories end up having quite different flavour:
\begin{proposition}
  Two braided monoidal Karoubi-complete $\bC$-linear 2-categories are equivalent in $\cat{Alg}_2(2\cat{KarCat}_\bC)$ if and only if they are braided-monoidally equivalent; automorphism groups computed in $\cat{Alg}_2(2\cat{KarCat}_\bC)$ agree with automorphism groups in the braided-monoidal sense. 
  Every object (resp.\ 1-morphism) in $\cat{Alg}_2(2\cat{KarCat}_\bC)$ is 2- (resp.\ 1-) dualizable, but
  the only 5-dualizable object in $\cat{Alg}_2(2\cat{KarCat}_\bC)$ is the identity object $2\Vec$.
  
  Equivalences in $\cat{Mor}_2(2\cat{KarCat}_\bC)$ are Morita equivalences. An object $[\cB] \in \cat{Mor}_2(2\cat{KarCat}_\bC)$ is 5-dualizable if and only if $\cB$ is fusion, and $[\cB]$ is invertible if and only if $\cB$ is a topological order.
\end{proposition}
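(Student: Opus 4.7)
The proof splits along the distinction between the \emph{pointed} category $\cat{Alg}_2(2\cat{KarCat}_\bC)$ and the \emph{unpointed} category $\cat{Mor}_2(2\cat{KarCat}_\bC)$. The guiding intuition is that the pointing data in $\cat{Alg}_2$ rigidifies bimodules so strongly that invertible higher morphisms degenerate into graphs of braided monoidal functors; by contrast, in $\cat{Mor}_2$ the freedom to reparametrize a bimodule at every level is precisely what makes higher dualizability meaningful and matches the dualizability ladder with the fusion/topological-order axioms.

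For the $\cat{Alg}_2$ claims, I would first show that a pointed 1-morphism $({}_{\cB_1}\cM_{\cB_2}, m)$ encodes a braided monoidal functor $\cB_1 \to \cB_2$: the distinguished object $m$ yields braided monoidal algebra maps from both $\cB_1$ and $\cB_2$ into $\End_\cM(m)$, and invertibility of the bimodule identifies these maps, producing a braided monoidal equivalence; the same argument at higher levels recovers the equivalence $\pi_i\Aut_{\cat{Alg}_2}(\cB) \cong \pi_i\Aut_{\text{br-mon}}(\cB)$. For 2-dualizability of every 0-morphism, I would take the dual of $\cB$ to be the reversed-braiding $\cB^{\mathrm{rev}}$, with evaluation and coevaluation bimodules built from $\cB$ itself; 1-dualizability of bimodules is analogous. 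The subtle clause is that only $2\cat{Vec}$ is 5-dualizable. My plan is an inductive descent on dualizability level: adjunctions for $k$-morphisms in $\cat{Alg}_2$ require the unit/counit to respect the pointings, which in the top degrees forces the pointing-preserving data to collapse to identities; iterating this down the tower trivializes all the bimodule structure and forces $\cB \simeq 2\cat{Vec}$.

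For $\cat{Mor}_2$, equivalences being Morita equivalences is a definitional unwinding of ``invertible bimodule.'' The statement that $[\cB]$ is 5-dualizable iff $\cB$ is fusion should follow by specializing the general Morita-category framework of \cite[Section 8]{JFS} (used already in \cite{me-TopologicalOrders}): full dualizability in such an $(n{+}3)$-categorical Morita construction recovers exactly the finite semisimplicity, Karoubi-completeness, and rigidity axioms in Definitions~\ref{defn.f2c}--\ref{defn.semisimple2cat}. The final assertion that $[\cB]$ is invertible iff $\cB$ is a topological order is the content of \cite[Theorem~2]{me-TopologicalOrders} translated into this 5-categorical language: invertibility in the Morita category is equivalent to triviality of the Drinfel'd centre $\cZ_{(1)}(\Sigma\cB)$, which by \cite[Corollary IV.2]{me-TopologicalOrders} is equivalent to triviality of the M\"uger centre $\cZ_{(2)}(\cB)$, and this is exactly the nondegeneracy condition of Definition~\ref{defn.3dtoporder}.

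The main obstacle is the ``only $2\cat{Vec}$ is 5-dualizable'' clause in $\cat{Alg}_2$; the remaining items are either direct unwindings of definitions or applications of theorems in \cite{JFS,me-TopologicalOrders}. To make the inductive argument precise, one must check at each level $k$ that demanding pointed $k$-morphisms admit pointed adjoints forces the pointed $(k{+}1)$-isomorphism data to collapse onto identities at the distinguished objects, and then show that propagating this constraint through all five levels is incompatible with any nontrivial braided structure on $\cB$. A reasonable concrete approach is to fix a nontrivial simple object $X \in \cB$ and track the pointed $5$-categorical ``trace'' one would need to construct in order to witness a Serre-type autoequivalence; the pointing obstructs the necessary identifications unless $X \simeq \one$.
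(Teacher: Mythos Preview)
The paper's proof is entirely by citation: the first sentence to \cite[Proposition~3.2.45]{ScheimbauerThesis}, the second to \cite[Theorem~6.1]{GwSch18} and \cite[Theorem~7.3]{me-Heisenberg}, the third as definitional, and the last to \cite[Theorems~1 and~2 and Corollary~IV.2]{me-TopologicalOrders} together with \cite[Theorem~1.1]{BJSS2020}. Your $\cat{Mor}_2$ citations align with the paper's. For the $\cat{Alg}_2$ claims you instead attempt direct arguments; the sketches for the first sentence and for $2$-dualizability are correct in outline and are essentially what the cited references establish.

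The genuine gap is the ``only $2\Vec$ is $5$-dualizable'' clause, which you rightly flag as the obstacle. Your inductive-descent intuition --- that pointing obstructs adjoints above level $2$ --- is correct, but the proposed implementation (tracking a Serre-type trace on a nontrivial simple $X$) is not an argument, and ``propagating this constraint through all five levels forces $\cB \simeq 2\Vec$'' is the statement to be proved, not a step toward it. The paper does not prove this clause either; it defers to \cite{GwSch18}. The actual mechanism there is that above level $2$ the morphisms in $\cat{Alg}_2$ are pointed functors, pointed natural transformations, and pointed modifications, and a pointed functor admits a pointed adjoint only when it is already an equivalence; feeding this back into the adjointability of the evaluation and coevaluation $1$-morphisms is what forces triviality. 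Making that precise is what is missing from your sketch; the single-object trace idea does not supply it.
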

\begin{proof}
  The first sentence follows from \cite[Proposition~3.2.45]{ScheimbauerThesis}. The second sentence is essentially to \cite[Theorem~6.1]{GwSch18}; see also \cite[Theorem~7.3]{me-Heisenberg}. The third sentence is essentially the definition of ``Morita equivalence.'' The last sentence is \cite[Theorems~1 and~2 and Corollary~IV.2]{me-TopologicalOrders}; a similar result in a slightly different context is \cite[Theorem~1.1]{BJSS2020}. 
\end{proof}

Now fix an isomorphism $N : \cS \isom \cZ_{(1)}(\Sigma\SVec)$; as explained in Remark~\ref{remark.spinNeumann}, such an isomorphism arises by realizing $\cS$ as spin-$\bZ_2$ gauge theory and choosing the Neumann boundary condition, hence the name $N$ for the isomorphism. This is equivalent to choosing a way for $\cS$ to act (from the right, say) on the fusion 2-category $\Sigma\SVec$. In other words, it realizes $\cN := \Sigma\SVec_\cS$ as a 1-morphism $\cN : \cS \to 2\Vec$ in $\cat{Alg}_2(2\cat{KarCat}_\bC)$. Furthermore, the corresponding 1-morphism $[\cN] : [\cS] \to [2\Vec]$ in $\cat{Mor}_2(2\cat{KarCat}_\bC)$ is an equivalence because the original map $N$ was an isomorphism. $\cZ_{(1)}(\Sigma\SVec)$ also acts on $\Sigma\SVec$ from the left; this produces the dual 1-morphism $\cN^* : 2\Vec \to \cS$ in $\cat{Alg}_2(2\cat{KarCat}_\bC)$ and its image $[\cN^*] = [\cN]^{-1} : [2\Vec] \to [\cS]$ in $\cat{Mor}_2(2\cat{KarCat}_\bC)$.

For any $f \in \Aut(\cS)$, write $\cS_f$ for the corresponding bimodule $\cS$-$\cS$ bimodule: as a right $\cS$-module, $\cS_f = \cS$, whereas as a left $\cS$-module, the action is twisted by $f$. 
Then $N \circ f$ is another isomorphism of braided 2-categories $\cS \isom \cZ_{(1)}(\Sigma\SVec)$, i.e.\ another boundary condition. It corresponds to the 1-morphism $\cN \circ \cS_f = \cN \otimes_\cS \cS_f : \cS \to 2\Vec$ in $\cat{Alg}_2(2\cat{KarCat}_\bC)$. Physically, $\cS_f$ is an invertible membrane operator in the 3+1D ``bulk'' $\cS$, and $\cN \circ \cS_f$ is the boundary condition built by bringing this membrane to the 2+1D boundary $\cN$.

Consider now a ``sandwich'' or ``slab,'' drawn in the left-hand part of Figure~\ref{figure.slab}, in which the interior is the 3+1D topological order $\cS$ with left-hand boundary $\cN \circ \cS_f$ and right-hand boundary the original Neumann right-boundary $\cN^*$. In terms of the 5-category  $\cat{Alg}_2(2\cat{KarCat}_\bC)$, this slab is precisely the composition
\begin{equation}\label{eqn.slab} \cA := \cN \circ \cS_f \circ \cN^* = \Sigma\SVec \underset{\cS}\otimes \cS_f \underset{\cS}\otimes \Sigma\SVec\end{equation}
Note that the image $[\cA] : [2\Vec] \to [2\Vec]$ in $\cat{Mor}_2(2\cat{KarCat}_\bC)$ is invertible, since it is the composition of $\cat{Mor}$-invertible 1-morphisms $[\cN] \circ [\cS_f] \circ [\cN]^{-1}$. Thus, $\cA$ is a 2+1D topological order in the sense of \cite{me-TopologicalOrders}: it is a multifusion 2-category with trivial centre.

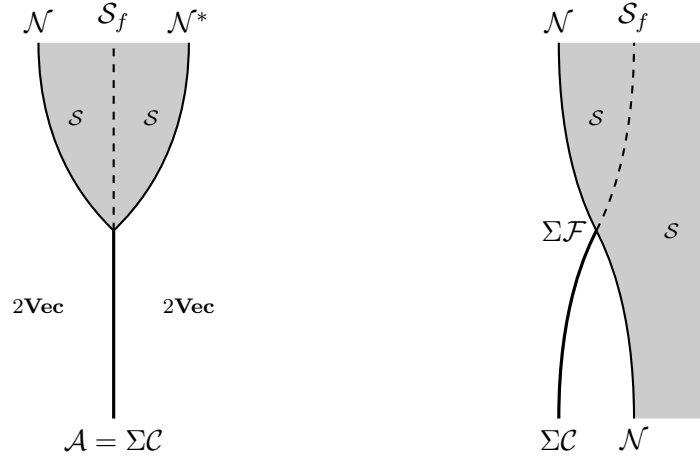
\begin{figure}[t]
  \centering
\begin{tikzpicture}[scale=.5]
  \fill[black!20!white] (0,0) .. controls +(-.5,.5) and +(0,-3) .. (-2,5) -- (2,5) .. controls +(0,-3) and +(.5,.5) .. (0,0);
  \draw[thick] (0,0) .. controls +(-.5,.5) and +(0,-3) .. (-2,5) node[anchor=south] {$\cN$};
  \draw[thick] (0,0) .. controls +(.5,.5) and +(0,-3) .. (+2,5) node[anchor=south] {$\cN^*$};
  \draw[thick, dashed] (0,0) -- (0,5) node[anchor=south] {$\cS_f$};
  \draw[very thick] (0,0) -- (0,-5) node[anchor=north] {$\cA = \Sigma\cC$};
  \path (-1,3) node {$\scriptstyle \cS$} (1,3) node {$\scriptstyle \cS$} (-2,-2) node {$\scriptstyle 2\Vec$} (2,-2) node {$\scriptstyle 2\Vec$};
\end{tikzpicture}
\hspace{1.5in}
\begin{tikzpicture}[scale=.5]
\fill[black!20!white] (1,-5) .. controls +(0,3) and +(.25,-.5) .. (0,0) .. controls +(-.25,.5) and +(0,-3) .. (-1,5) -- (3,5) -- (3,-5) -- cycle;
\draw[thick] (0,0) .. controls +(-.25,.5) and +(0,-3) .. (-1,5) node[anchor=south] {$\cN$};
\draw[thick,dashed] (0,0) .. controls +(+.25,.5) and +(0,-3) .. (1,5) node[anchor=south] {$\cS_f$};
\draw[very thick] (0,0) .. controls +(-.25,-.5) and +(0,3) .. (-1,-5) node[anchor=north] {$\Sigma\cC$};
\draw[thick] (0,0) .. controls +(.25,-.5) and +(0,3) .. (1,-5) node[anchor=north] {$\cN$};
\path (0,3) node {$\scriptstyle \cS$} (2,0) node {$\scriptstyle \cS$};
\path (0,0) node[anchor=east] {$\Sigma\cF$};
\end{tikzpicture}
\caption{Left: a graphical depiction of the definition (\ref{eqn.slab}) of the 2+1D topological order $\cA$ as a ``slab'' built from the Neumann boundary condition $\cN : \cS \to 2\Vec$ and the automorphism $f \in \Aut(\cS)$. Right: the same figure, rotated a bit, recovers the boundary condition $\cN \circ \cS_f$ from a Morita equivalence $\cF : \cC \otimes \SVec \simeq \SVec$.}
\label{figure.slab}
\end{figure} 

As with any ``sandwich'' or ``slab'' construction, the $k$-dimensional operators in $\cA$ are realized as triples consisting of: a $(k{+}1)$-dimensional operator in the bulk $\cS$, a way for this operator to end on the left-boundary, and a way for this operator to end on the right boundary. For example, to give a vertex operator in $\cA$ requires giving a line in $\cS$ which can end on both boundaries. But the worldline of the fermion~$e$ cannot end on the Neumann boundary --- it does not ``condense'' on the boundary --- and so the only vertex operators in $\cA$ are ends of the vacuum line. Thus $\cA$ is in fact fusion. By \cite[Theorem~5]{me-TopologicalOrders}, the lines in $\cA$ form a nondegenerate braided fusion 1-category $\cC := \Omega\cA$, and $\cA$ is recovered as its suspension $\cA \cong \Sigma\cC$.

Thus to understand $\cA$ it suffices to understand its category $\cC$ of line operators. Every line operator is realized as a surface operator in $\cS$ together with ways for that surface to end on the boundaries. The trivial surface operator $\one \in \cS$ provides two lines to $\cA$: the vacuum identity line~$1$ and a fermion~$e$. In other words, $\cC  \supset \SVec$. The Cheshire string $c \in \cS$ does not condense on either boundary, and may be ignored. More precisely, the Cheshire string can end in the bulk in a noninvertible way, and this ending may be moved to the boundary; the resulting object in $\cC$ is $1\oplus e$.

\begin{remark}\label{idonidcomp}
  Each boundary condition $\cN,\cN^*$ provides an inclusion $\SVec \subset \cC$, since those boundary conditions are each ways that $\cS$ can act on $\Sigma\SVec$ --- the copies of $\SVec$ arise as the left- and right- endpoints of the vacuum string in $\cS$. Because the lines can be lifted through the bulk, these two subcategories $\SVec \subset \cC$ are in fact equal as subcategories. If $\SVec$ admitted 0-form automorphisms, then the two inclusions $\SVec \mono \cC$ might differ by an automorphism of $\SVec$, but this cannot happen since $\pi_0 \Aut(\SVec)$ is trivial. However, $\SVec$ does has 1-form automorphisms, which could show up if one were to consider an isomorphism $f \cong f'$ of elements in $\Aut(\cS)$. The cleanest statement is to require $f$ to live in the kernel of $\Aut(\cS) \to \Aut(\Omega\cS) = \Aut(\SVec)$.
\end{remark}

This leaves strings from the magnetic component of $\cS$. These strings are characterized by having nontrivial linking with the fermion $e$. It follows that any line in $\cA$ produced from their endings will also link nontrivially with $e$. In other words:

\begin{proposition} \label{prop.MME1}
  The nondegenerate braided fusion category $\cC = \Omega\cA$ is a \define{minimal modular extension (MME)} of $\SVec = \{1,e\}$: the relative M\"uger centre of the inclusion $\SVec \subset \cC$ is just $\SVec$. \qed
\end{proposition}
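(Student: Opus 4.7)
Since nondegeneracy of $\cC$ and the inclusion $\SVec \subset \cC$ are already established in the surrounding discussion, the only thing left to show is that the relative M\"uger centre of $\SVec \subset \cC$ is no bigger than $\SVec$. Equivalently, a simple line $X \in \cC$ whose full braiding with the fermion $e \in \SVec \subset \cC$ is trivial must already lie in $\SVec$. (There is nothing to check against the vacuum line $1$.)

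The plan is to read this off from the slab description of $\cA = \cN \circ \cS_f \circ \cN^*$. Every simple line $X \in \cC = \Omega\cA$ is a vertex of $\cA$, hence arises from a triple consisting of a simple surface $S \in \cS$ together with a choice of ending on the left boundary $\cN \circ \cS_f$ and on the right boundary $\cN^*$. Since $\pi_0 \cS = \bZ_2 = \{\one, m\}$, this data splits into two cases according to the component of $S$. If $S$ lies in the identity component $\Sigma\SVec \subset \cS$, then up to condensation $S$ is a direct sum of copies of $\one$ and the Cheshire string $c$; the resulting lines are built from the endings of $\one$ (which give $1$ and $e$) and of $c$ (which gives $1 \oplus e$, as noted in the text). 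In particular, all such lines lie in $\SVec \subset \cC$.

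It remains to rule out $X$ arising from a surface $S$ in the magnetic component. This is where the key geometric input enters: the braiding in $\cC$ of $X$ with the line $e \in \cC$ is computed in the bulk by linking the surface $S$ with a small $e$-loop running through the slab (the endings play no role because $e$ comes from the vacuum surface $\one \in \cS$ with its tautological endings). By the very characterization of the magnetic component --- equivalently, by the nonvanishing $EM$ term in the braiding classes (\ref{S-braiding}--\ref{T-braiding}), which is the statement of remote detectability of $e$ by $m$ --- this linking number is $-1$, so $X$ braids nontrivially with $e$ and does not lie in the relative M\"uger centre. This forces the relative M\"uger centre to be exactly $\SVec$.

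The only real obstacle is the geometric assertion in the previous paragraph: that the full braiding $\beta_{X,e}\beta_{e,X}$ in $\cC$ is computed by bulk linking of the representing surface $S$ with the $e$-worldline threaded through the slab. Physically this is immediate, but mathematically it is a statement about how horizontal composition in $\cat{Alg}_2(2\KarCat_\bC)$ interacts with the $E_2$-structure extracted by $\Omega$ on a commutative algebra object (here, the tensor product $\cN \circ \cN^*$ viewed as an algebra in $\cS$-bimodules over $2\Vec$). Once this compatibility is in hand, the reduction to the two-case analysis above is immediate from \S\ref{subsec.fer}.
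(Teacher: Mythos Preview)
Your proposal is correct and follows essentially the same approach as the paper: the proposition is stated with a \qed\ symbol precisely because the argument is the discussion immediately preceding it, which splits lines of $\cC$ by the component of the bulk surface they arise from, notes that identity-component surfaces yield only $\SVec$-lines, and that magnetic-component surfaces yield lines linking nontrivially with $e$. If anything, your final paragraph is more scrupulous than the paper, which simply asserts ``It follows that any line in $\cA$ produced from their endings will also link nontrivially with $e$'' without further comment on the compatibility of bulk linking with the $E_2$-structure on $\Omega\cA$.
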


It is a famous fact, due to \cite{MR2200691,MR3022755}, that $\SVec$ has exactly $16$ minimal modular extensions, namely the modular tensor categories $\Spin(k)_1$ (which depend only on $k$ mod 16); see \cite[\S4.4]{MR3613518} for details.

The construction $f \mapsto \cC$ can be reversed. Suppose $\cC$ is a minimal modular extension of $\SVec$,  and consider the braided fusion category $\cC \otimes \SVec$. Since the braiding in $\cC$ is nondegenerate, $\cZ_{(1)}(\Sigma\cC) \cong \Sigma\Vec $ is trivial, and so there is a canonical isomorphism of 3+1D topological orders
\begin{equation} \label{eqn.MMEa}
 \cZ_{(1)}(\Sigma(\cC \otimes \SVec)) \cong \cZ_{(1)}(\Sigma\cC \otimes \Sigma\SVec) \cong \cZ_{(1)}(\Sigma\cC) \otimes \cZ_{(1)}(\Sigma\SVec)  \cong \cZ_{(1)}(\Sigma\SVec).
\end{equation}
Physically, this isomorphism corresponds to realizing $\cC \otimes \SVec$ as a boundary condition for $\cS = \cZ_{(1)}(\Sigma\SVec)$ simply by taking the Neumann boundary $\cN$ and stacking on a noninteracting copy of~$\cA = \Sigma\cC$. That it gives a true boundary condition is precisely the statement that the composition $\cA \circ \cN : \cS \to \Sigma\Vec$ in $\cat{Alg}_2(2\cat{KarCat}_\bC)$ lives over an invertible morphism $[\cA] \circ [\cN]$ in $\cat{Mor}_2(2\cat{KarCat}_\bC)$.

Since $\cC$ is a minimal modular extension of $\SVec$, it comes equipped with a distinguished subcategory $\cC \supset \SVec$, and so $\cC \otimes \SVec \supset \SVec \otimes \SVec$. Condense the diagonal in $\SVec \otimes \SVec$, i.e.\ condense the composite boson $e \otimes e \in \cC \otimes \SVec$. The result is yet another copy of $\SVec$, and the condensation procedure  produces a Morita equivalence of braided fusion 1-categories
\begin{equation} \label{eqn.CSVec}
  \cF : \cC \otimes \SVec \isom \SVec.
\end{equation}
The fusion category $\cF$ is a copy of $\cC$, and its $\cC$-action is the standard one.

Since Morita equivalences induce isomorphisms of centres, (\ref{eqn.CSVec}) leads to an isomorphism
\begin{equation}\label{eqn.MMEb}
  \cZ_{(1)}(\Sigma(\cC \otimes \SVec)) \cong \cZ_{(1)}(\Sigma\SVec).
\end{equation}
Composing the isomorphisms (\ref{eqn.MMEa}) and (\ref{eqn.MMEb}) gives an automorphism $f \in \Aut(\cS)$. By its construction, this automorphism trivializes on the identity component $\Sigma\SVec$ of $\cS$.

In the 5-categorical language,  (\ref{eqn.CSVec}) corresponds to a 2-morphism
\begin{equation} \label{eqn.SigmaCSvec}
 \Sigma\cF : \cA \circ \cN \Rightarrow \cN \circ \cS_f
\end{equation}
of 1-morphisms $\cS \to 2\Vec$ in $\cat{Alg}_2(2\cat{KarCat}_\bC)$. That $\cF$ was a Morita equivalence translates to the statement that $[\Sigma\cF] : [\cN] \Rightarrow [\cA] \circ [\cN]$ is invertible in $\cat{Mor}_2(2\cat{KarCat}_\bC)$.

\begin{proposition}
  Theses constructions $f \mapsto \cC$ and $\cC \mapsto f$ are inverse to each other.
\end{proposition}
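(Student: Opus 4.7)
The plan is to verify both compositions equal the identity, exploiting the Morita-inverse relationship $[\cN^*] \cong [\cN]^{-1}$ in $\cat{Mor}_2(2\cat{KarCat}_\bC)$ together with the 2-morphism $\Sigma\cF$ from~(\ref{eqn.SigmaCSvec}) that lifts this invertibility to data in the richer $\cat{Alg}_2(2\cat{KarCat}_\bC)$.

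First I would verify $f \mapsto \cC \mapsto f' \cong f$. Starting with $f \in \Aut(\cS)$ (reduced to the kernel of $\Aut(\cS) \to \Aut(\Omega\cS)$ per Remark~\ref{idonidcomp}), the slab construction produces $\cA = \cN \circ \cS_f \circ \cN^*$ with $\cC = \Omega\cA$. The reverse construction takes $\cC$ together with the diagonal-condensation Morita equivalence $\cF: \cC \otimes \SVec \isom \SVec$ of~(\ref{eqn.CSVec}), producing an automorphism $f' \in \Aut(\cS)$ characterized by a 2-isomorphism $\Sigma\cF : \cA \circ \cN \Rightarrow \cN \circ \cS_{f'}$. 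But by definition of $\cA$,
$$ \cA \circ \cN = \cN \circ \cS_f \circ \cN^* \circ \cN \cong \cN \circ \cS_f, $$
where the last isomorphism uses the counit $\cN^* \circ \cN \Rightarrow \id_\cS$ of the duality $\cN^* \dashv \cN$. Comparing the two 2-isomorphisms $\cA \circ \cN \Rightarrow \cN \circ \cS_{?}$ identifies $f' \cong f$, provided one checks that the counit really is given by $\Sigma\cF$.

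Second I would verify $\cC \mapsto f \mapsto \cC' \cong \cC$. Given an MME $\cC$, the resulting automorphism $f$ comes equipped with the 2-isomorphism $\Sigma\cF: \Sigma\cC \circ \cN \Rightarrow \cN \circ \cS_f$. Composing on the right with $\cN^*$ and using the unit $\id_{2\Vec} \Rightarrow \cN \circ \cN^*$ gives a 2-isomorphism
$$ \Sigma\cC \cong \Sigma\cC \circ \cN \circ \cN^* \cong \cN \circ \cS_f \circ \cN^*, $$
so the slab built from $f$ is canonically 2-isomorphic to the original $\Sigma\cC$, and applying $\Omega$ yields $\cC' = \Omega(\cN \circ \cS_f \circ \cN^*) \cong \cC$.

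The hard part will be verifying that the ``canonical'' counit of the duality $\cN^* \dashv \cN$ coincides, under the identification $\cC = \Omega(\cN \circ \cS_f \circ \cN^*)$, with the diagonal-condensation $\cF$. This is a compatibility between pointings in $\cat{Alg}_2(2\cat{KarCat}_\bC)$, which is exactly the geometric content of Figure~\ref{figure.slab}: rotating the slab by a quarter-turn folds the two copies of $\cN$ into a single bigon, and line operators of $\cC$ then appear as bulk strings of $\cS$ ending compatibly on the two sides, which is precisely the $\SVec$-$\SVec$ diagonal that $\cF$ condenses. The only residual ambiguity is an automorphism of $\SVec$, and these lie in the $1$-form symmetry kernel quotiented out in Remark~\ref{idonidcomp}, so the two descriptions agree on the nose.
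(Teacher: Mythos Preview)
Your approach is essentially the paper's: both use the mate/adjunction formalism relating $\hom(\cA \Rightarrow \cN \circ \cS_f \circ \cN^*)$ and $\hom(\cA \circ \cN \Rightarrow \cN \circ \cS_f)$. However, there is a genuine gap in how you handle the unit and counit.

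In $\cat{Alg}_2(2\cat{KarCat}_\bC)$ the pair $(\cN,\cN^*)$ is only a \emph{duality}, not an invertibility; the unit $\id_{2\Vec} \Rightarrow \cN\circ\cN^*$ and counit $\cN^*\circ\cN \Rightarrow \id_\cS$ are 2-morphisms that need not be invertible. Your displayed ``isomorphisms'' $\cN \circ \cS_f \circ \cN^* \circ \cN \cong \cN \circ \cS_f$ and $\Sigma\cC \cong \Sigma\cC \circ \cN \circ \cN^*$ are therefore unjustified in $\cat{Alg}_2$; they hold only after passing to $\cat{Mor}_2$, where all you recover is a Morita equivalence $\Sigma\cC \simeq \Sigma\cC'$, not an equivalence $\cC \cong \cC'$ of braided fusion categories. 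Your final paragraph tries to repair this by saying the residual ambiguity lies in $\Aut(\SVec)$, but that does not address why the mate is invertible in $\cat{Alg}_2$ rather than merely in $\cat{Mor}_2$.

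The paper closes this gap with a different argument. The mate of $\Sigma\cF$ is a priori only a Morita equivalence between suspensions of minimal modular extensions of $\SVec$. The point is that the Morita equivalence in question is $\Sigma\cF$ again (just viewed as a bimodule the other way), and by construction $\cF$ carries the \emph{standard} $\cC$-module structure. This forces the mate to be the ``modulation'' of an honest braided tensor functor $\cC \to \Omega(\cN\circ\cS_f\circ\cN^*)$. One then invokes the elementary lemma that any braided tensor functor between minimal modular extensions of $\SVec$ is automatically an equivalence. That last step is the missing key idea in your argument.
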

\begin{proof}
  The 2-morphism $\Sigma\cF$ from (\ref{eqn.SigmaCSvec}) is nothing but the ``mate'' of the isomorphism (\ref{eqn.slab}), i.e.\ its image under the equivalence
  $$ \hom(\cA \Rightarrow \cN \circ \cS_f \circ \cN^*) \cong \hom(\cA \circ \cN \Rightarrow \cN \circ \cS_f).$$
  The only thing to confirm is that, given $\cC$, the corresponding 2-morphism $\Sigma\cF \in \hom(\cA \circ \cN \Rightarrow \cN \circ \cS_f)$ has an isomorphism as its mate. Since $\Sigma\cF$ lives over a Morita equivalence between Morita invertible 1-morphisms, its mate  is certainly a Morita equivalence. By Proposition~\ref{prop.MME1}, it is a Morita equivalence between (suspensions of) minimal modular extensions of $\SVec$. Unpacking definitions shows that the mate of $\Sigma\cF$ is simply $\Sigma\cF$ again, but thought of as a bimodule in a different way. On the other hand, the construction of $\cF$ showed that, as a $\cC$-module, it was the standard one. This implies that the 2-morphism $\cA = \Sigma\cC \Rightarrow \cN \circ \cS_f \circ \cN^*$ in $\cat{Alg}_2(2\cat{KarCat}_\bC)$ is the ``modulation'' of a homomorphism $\cA \to \cN \circ \cS_f \circ \cN^*$ of fusion 2-categories, or equivalently a homomorphism $\cC = \Omega\cA \to \Omega(\cN \circ \cS_f \circ \cN^*)$ of braided fusion 2-categories. But a homomorphism of minimal modular extensions of $\SVec$ is automatically an isomorphism.
\end{proof}

\begin{remark}\label{rem.MMEWitt}
The same construction in fact proves the following much more general statement, first conjectured in \cite[Theorem$^{\mathrm{ph}}$ 3.37]{KLWZZ} (see also \cite[Propositions~33 and~34]{KLWZZ2}). Let $\cC$ be a braided fusion 1-category with M\"uger centre $\cE = \cZ_{(2)}(\cC)$. Then the groupoid of minimal modular extensions of $\cC$ is naturally isomorphic to the groupoid of isomorphisms $\cZ_{(1)}(\Sigma\cC) \cong \cZ_{(1)}(\Sigma\cE)$ whose restriction to $\Sigma\cE$  is the identity. (Given the higher-categorical nature of the problem, there is data involved in identifying the restriction to $\Sigma\cE$ with the identity isomorphism. The necessity of this data is indicated in Remark~\ref{idonidcomp}.)
In particular, if $\cC = \cE$ and if $\cE$ has no 0-form automorphisms (as is the case for $\cE = \SVec$), then $\pi_0 \Aut(\cZ_{(1)}(\Sigma\cE))$ is isomorphic to the group of minimal modular extensions of $\cE$ from~\cite{MR3613518}. 

  For example, for any finite group $G$, the minimal modular extensions of $\Rep(G)$ are classified by $\H^3(\rB G; \bC^\times)$  \cite[Theorem 4.22]{MR3613518}. Taking $G = \bZ_2$ confirms equation (\ref{autR}). For $\cE = \SVec$, this confirms (the $\cS$ part of) equation (\ref{autS}), and also explains the relationship, alluded to in Remark~\ref{remark.fer.SS}, between $\SH^4(\bZ_2^F[2]) \cong \bZ_{16}$ and the kernel of the map $\cW \to \rS\cW$ from the bosonic to fermionic Witt groups.
\end{remark}

The last remaining question is to understand ``computationally'' how an automorphism $f \in \Aut(\cS)$ determines the corresponding minimal modular extension $\cC$ of $\SVec$. As mentioned above, two of the simple objects of $\cC$ --- the vacuum $1$ and the fermion $e$ --- arise as ways that the invisible string $\one \in \cS$ can end, and the other objects come from the magnetic strings $m$ and $m'$. In the Neumann boundary condition $\cN$, one of these --- $m$, say --- condenses, whereas the $m' \mapsto c$, the Cheshire string in $\Sigma \SVec$. Suppose that $m$ also condenses in $\cN \circ \cS_f$. Then the magnetic particles in $\cC$, i.e.\ the simple objects that braid nontrivially with $e$, are invertible, and there are two of them, differing by a copy of $e$. This follows simply because $m \in \cS$ can end in an invertible way on both boundary conditions. On the other hand, suppose that $m'$ condenses in new $\cN \circ \cS_f$ boundary condition, and $m \mapsto c$. The Cheshire string $c$ can end, but only in an noninvertible way, and that ending is fixed under fusing with $e$. As a result,  $\cC$ contains a noninvertible simple object fixed under fusing with $e$.

In other words, the automorphism $f$ exchanges which string in $\cS$ condenses if and only if the corresponding braided fusion category $\cC$ is one of the eight Ising-type categories among the 16 minimal modular extensions of $\SVec$. The existence of such Ising-type categories implies:

\begin{proposition} \label{prop.mswitch}
  $\cS$ (and hence $\cT$) admits an automorphism which exchanges the magnetic strings $m$ and $m'$. \qed
\end{proposition}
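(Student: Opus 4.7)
The plan is to simply apply the bijection, established in the preceding several paragraphs, between the group $\ker(\Aut(\cS) \to \Aut(\Omega\cS))$ and the set of minimal modular extensions of $\SVec$. Under this correspondence, the analysis immediately before the proposition already identifies the condition on the MME $\cC$ that is equivalent to $f$ exchanging $m$ with $m'$: namely, $\cC$ must contain a noninvertible simple object, equivalently $\cC$ must be one of the eight Ising-type MMEs of $\SVec$ (as opposed to one of the eight pointed, i.e.\ $\bZ_2^2$-fusion, extensions). So the whole task reduces to exhibiting a single Ising-type MME of $\SVec$.

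First I would invoke the classical classification of \cite{MR2200691,MR3022755} (as summarized in \cite[\S4.4]{MR3613518}) which lists the sixteen MMEs of $\SVec$ as $\Spin(k)_1$ for $k \in \bZ/16$. For odd $k$, the category $\Spin(k)_1$ is Ising-type, containing the noninvertible simple object usually denoted $\sigma$; the simplest choice is the Ising modular tensor category $\Spin(1)_1$. Plugging this $\cC$ into the construction $\cC \mapsto f$ from (\ref{eqn.MMEa})--(\ref{eqn.MMEb}) produces an element of $\Aut(\cS)$, trivial on the identity component $\Sigma\SVec$, whose associated MME is $\cC$ itself (by the inverse-construction proposition just proved).

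It then remains to verify, from the recipe in the paragraph preceding the proposition, that this $f$ indeed exchanges $m$ and $m'$. The point is that the simple objects of $\cC$ which braid nontrivially with $e$ are exactly the ways the magnetic strings can end on the two boundaries $\cN$ and $\cN \circ \cS_f$; if $m$ condensed on both sides, the resulting magnetic lines in $\cC$ would be invertible (Pontryagin dual to the two copies of $\SVec$), whereas if the condensing string is swapped, the non-condensed string $m$ ends only in a noninvertible way on one side and gives rise to a noninvertible $e$-fixed simple object — precisely the Ising $\sigma$. Since our $\cC$ is Ising-type, we are in the second case, so $f$ swaps $m$ and $m'$. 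This gives the desired automorphism of $\cS$; transporting along the monoidal equivalence $\cS \simeq \cT$ of Remark~\ref{remark.monoidalST} (or equally, rerunning the entire construction with $\cT$ in place of $\cS$, using its own Neumann-like realization or simply the fact noted in Remark~\ref{remark.anomaly2} that $\Aut(\cS)$ and $\Aut(\cT)$ act ``the same way'') yields the claim for $\cT$ as well.

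The only point requiring real care is the last one — making sure no obstruction prevents transferring the automorphism from $\cS$ to $\cT$. Since the two topological orders differ only by the anomaly class $(-1)^{\Sq^2\Sq^1 M}$ of Remark~\ref{inthewild}, and since the monoidal structures are identical, any monoidal automorphism that preserves or swaps the pieces of the underlying data in a way compatible with the braiding in $\cS$ will do so for $\cT$ as well; the ``swap $m \leftrightarrow m'$'' operation fixes the class $(-1)^{\Sq^2\Sq^1 M}$ (it only rescales $M$ by an element of $\{1,e\}$, which acts trivially on $\H^5(\bZ_2[2];\bC^\times)$ through the projection), so no new obstruction arises.
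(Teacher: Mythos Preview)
Your argument for $\cS$ is exactly the paper's: the proposition is stated with a \qed\ because the preceding sentence (``The existence of such Ising-type categories implies:'') is the entire proof, and you have correctly unpacked what that sentence means.

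For the parenthetical ``(and hence $\cT$)'', your approach diverges from the paper's and one of your suggestions is actually wrong. You offer to ``rerun the entire construction with $\cT$ in place of $\cS$, using its own Neumann-like realization,'' but $\cT$ has no such realization: the whole point of Remarks~\ref{remark.spinNeumann} and~\ref{remark.MMEredux} is that $\cT$ is anomalous and is not expected to be a Drinfel'd centre, so the MME-based construction does not apply to it directly. Your option of transporting along the merely monoidal equivalence of Remark~\ref{remark.monoidalST} is also not obviously sufficient, since a monoidal equivalence need not carry braided automorphisms to braided automorphisms. Your cohomological sketch in the last paragraph is on shakier ground than you suggest: the swap $m \leftrightarrow m' = mc$ does not act on the generator $M$ in the description $\bC^\times[4].\{1,e\}[3].\{\one,m\}[2]$, because $c$ is not one of the generators there (it only appears after Karoubi completion).

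The paper's own justification for $\cT$ is the short involution argument given back in \S\ref{subsec.fer}: the ``swap the magnetic string'' operation is an involution on the set $\{\cS,\cT\}$; since it fixes $\cS$ (this is what you proved), it cannot exchange $\cS$ and $\cT$, hence it must also fix $\cT$. Your appeal to Remark~\ref{remark.anomaly2} is morally the same observation, so your proof is salvageable, but the involution argument is both cleaner and avoids the circularity risk of citing a remark whose content overlaps with what you are proving.
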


\section{Three proofs of the Main Theorem} \label{sec.proofs}

In this section I will prove the \hyperlink{mainstatement}{Main Theorem}. As explained in the introduction, I will focus on the second sentence. The reason is that I do now know any proof of the first sentence that does not simply repeat the argument from \cite{PhysRevX.8.021074}. As for the second sentence, I will give three proofs. The first, in \S\ref{subsec.proof1} is extremely hands-on and direct: I will complete the proof started in \S\ref{subsec.fer} by showing directly that any topological order $\cB$ with $\Omega\cB \cong \SVec$ has only two components $\pi_0 \cB \cong \bZ_2$, thus realizing the expectation (\ref{expectation1}). The problem with hands-on proofs is that they tend to use special features of the problem, and it is often not clear how to apply their ideas to other cases. In the second proof, in \S\ref{subsec.proof2}, is more generalizable: by using a super version of the classification from \cite{PhysRevX.8.021074}, the ``base change'' $\cB \otimes \Sigma\SVec$ of any topological $\cB$ with $\Omega\cB \cong \SVec$ is known, and the \hyperlink{mainstatement}{Main Theorem} reduces to a ``Galois descent'' problem for the ``Galois extension'' $\Sigma\Vec \to \Sigma\SVec$. The last proof, in \S\ref{subsec.proof3}, is the most abstract: I will directly apply the classification statement of \cite[Corollary~V.5]{me-TopologicalOrders}.

\begin{remark}\label{LKWargument}
For completeness, I will very briefly review the argument from \cite{PhysRevX.8.021074}, specialized to the case in the first sentence of the \hyperlink{mainstatement}{Main Theorem}. Suppose that $\cB$ is a 3+1D topological order with $\Omega\cB = \Rep(\bZ_2)$. Then the emergent boson $e$ is condensible. More precisely, the algebra object $\cO(\bZ_2) \in \Rep(\bZ_2)$ is commutative, and this provides a (commutative) algebra structure to the Cheshire string $c \in \Sigma\Rep(\bZ_2)$, which is moreover a condensation monad in the sense of \cite{GJFcond}.  The image of this condensation monad is a new 3+1D topological order $\cB'$ equipped with a Morita equivalence to $\cB$; physically, the 3+1D topological orders $\cB$ and $\cB'$ are separated by a gapped topological 2+1D interface. But since $e$ condenses, $\cB'$ has no line operators, and so is trivial by \cite[Remark~IV.3]{me-TopologicalOrders}, and the interface is a boundary condition for~$\cB$. Thus (by Remark~\ref{drinfeldcentres}) $\cB = \cZ_{(1)}(\cA)$, where $\cA$ is the fusion 2-category of surfaces on this boundary condition. Finally, the whole procedure started with the choice of commutative algebra object $\cO(\bZ_2) \in \Rep(\bZ_2)$, and its automorphism group is $\bZ_2$ (as a 0-form group, with no higher-form symmetries). This means that the boundary condition has 0-form automorphism group $\bZ_2$, implemented by invertible surface operators, and it has no higher-dimensional operators, and so $\cA = (\Sigma\Vec)^\omega[\bZ_2]$ for some class $\omega \in \H^4(\bZ_2[1];\bC^\times) = 0$. (By \cite{JFYu}, there are no non-invertible indecomposable surface operators on the boundary.)
\end{remark}

\subsection{First proof of the Main Theorem: direct analysis of \texorpdfstring{$\pi_0 \cB$}{pi0 B}} \label{subsec.proof1}

I will now give a hands-on direct proof of (the second sentence of) the \hyperlink{mainstatement}{Main Theorem}, building on the calculations in \S\ref{subsec.fer}. Throughout this subsection, I assume that $\cB$ is a (possibly-degenerate) braided fusion $2$-category with $\Omega\cB \cong \SVec$. To start off, note that:

\begin{lemma}[{\cite[Theorem~B]{JFYu}}]
  The indecomposable objects of $\cB$ are all invertible. Each component of $\cB$ has exactly two objects, related by tensoring with the Cheshire string $c \in \Sigma\Omega\cB$. The components also form a group $\pi_0 \cB = \{\text{indecomposable objects}\} / \{\one,c\}$. \qed
\end{lemma}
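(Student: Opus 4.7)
The plan is to invoke the classification of indecomposable finite semisimple left module categories over $\SVec$: there are exactly two such, namely $\SVec$ itself (with its regular action) and $\Vec$ (with the action in which the fermion $e$ acts by the identity functor). These correspond to the two Morita classes of separable algebras in $\SVec$, represented by $\bC$ and $\Cliff(1)$ respectively. For each indecomposable $X \in \cB$, the hom category $\hom_\cB(\one, X)$ carries the structure of an indecomposable finite semisimple module category over $\End_\cB(\one) = \Omega\cB = \SVec$ via postcomposition, and so is equivalent either to $\SVec$ or to $\Vec$. I will call this equivalence class the \emph{module type} of $X$.

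Next I would check that tensoring by $c$ swaps the two module types. Using $c^* \cong c$ (which is immediate from (\ref{fusionrule-cheshire-fermionic})) together with the braiding isomorphism $c \otimes X \cong X \otimes c$, one identifies $\hom_\cB(\one, c \otimes X) \cong \hom_\cB(c, X) \cong \underline{\hom}_{\SVec}(\Cliff(1), \hom_\cB(\one, X))$, where $\Cliff(1)$ is the algebra representing $c$ as an $\SVec$-module. A direct computation yields $\underline{\hom}_{\SVec}(\Cliff(1), \SVec) \simeq \Vec$ and $\underline{\hom}_{\SVec}(\Cliff(1), \Vec) \simeq \SVec$, so $X$ and $c \otimes X$ have opposite module types, are in particular inequivalent, and lie in the same component.

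For invertibility, I would use a module-categorical Eilenberg--Watts equivalence to identify $\End_\cB(X) \simeq \End_{\SVec}(\hom_\cB(\one, X))$. In both module-type cases this right-hand side is $\SVec$: when $\hom_\cB(\one,X) \simeq \SVec$ one trivially gets $\End_{\SVec}(\SVec) \simeq \SVec$; when $\hom_\cB(\one,X) \simeq \Vec \cong \Cliff(1)\text{-mod}$ one gets $\Cliff(1)$-$\Cliff(1)$-bimodules in $\SVec$, which via the super Morita isomorphism $\Cliff(1) \otimes \Cliff(1) \simeq \bC$ is again $\SVec$. Therefore $\hom_\cB(\one, X \otimes X^*) = \End_\cB(X) \simeq \SVec$. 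Since $X \otimes X^*$ lies in the identity component $\Sigma\SVec$, we may write $X \otimes X^* \cong a\,\one \oplus b\,c$; applying $\hom_\cB(\one, -)$ produces $a \cdot \SVec \oplus b \cdot \Vec$, which can equal $\SVec$ only for $a=1$, $b=0$. Hence $X \otimes X^* \cong \one$, so $X$ is invertible. Once invertibility is in hand, any two indecomposables $X, Y$ in the same component give an invertible $X \otimes Y^* \in \Sigma\SVec$, hence $X \otimes Y^* \in \{\one, c\}$ (the only invertible objects of $\Sigma\SVec$), so $Y$ is isomorphic to $X$ or to $c \otimes X$. This establishes the ``exactly two per component'' claim, and the group structure on $\pi_0\cB$ is then automatic from tensor product of invertibles.

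The main obstacle is the bookkeeping with $\SVec$-module-category structures, in particular confirming that tensoring with $c$ genuinely swaps the module type rather than (say) leaving it invariant under some subtler identification of $\SVec$-actions; everything after that reduces to a combination of duality, Eilenberg--Watts, and simple counting in the identity component, all routine once the classification of module categories over $\SVec$ is correctly set up.
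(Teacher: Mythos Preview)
Your argument has a genuine gap at the Eilenberg--Watts step. The identification
\[
\End_\cB(X) \;\simeq\; \End_{\SVec}\bigl(\hom_\cB(\one, X)\bigr)
\]
is only valid when $X$ lies in the identity component $\Sigma\SVec \subset \cB$: that is the setting in which ``$\hom$ from the unit'' implements the equivalence between $\Sigma\SVec$ and $\SVec$-module categories. For an indecomposable $X$ in any other component --- and these are exactly the objects the lemma is really about, e.g.\ the magnetic strings --- the category $\hom_\cB(\one, X)$ vanishes by the very definition of ``component,'' so the right-hand side is the zero category while the left-hand side always contains $\id_X$. Your ``module type'' dichotomy therefore never applies to any $X$ outside the identity component, and the invertibility argument collapses at precisely the point where it is needed.

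The parts of your argument that \emph{do} take place inside the identity component are fine: you correctly use that $X \otimes X^*$ lands in $\Sigma\SVec$ (via the (co)evaluation maps), and the decomposition $X \otimes X^* \cong a\,\one \oplus b\,c$ together with $\hom_\cB(\one, X\otimes X^*) \cong \End_\cB(X)$ is a good starting point. What is missing is an independent determination of $\End_\cB(X)$ --- or equivalently of the pair $(a,b)$ --- that does not presuppose an Eilenberg--Watts description via $\hom_\cB(\one,X)$. The reference \cite{JFYu} supplies this: its Theorem~B shows (for fusion, not even braided, $2$-categories) that every component has the same number of indecomposables as the identity component, essentially by a dimension-counting argument comparing $X\otimes X^*$ and $X^*\otimes X$ and using associativity of the tensor product. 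Once one knows each component has exactly two simples, your endgame (tensoring by $c$ swaps them, hence $X\otimes X^* \cong \one$, hence invertibility) goes through.

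The paper itself does not give an argument here; the \qed\ records that the statement is simply quoted from \cite{JFYu}.
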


Thus it suffices to work just with the invertible subgroupoid $\cB^\times \subset \cB$, and I will simply say ``string'' when I mean an object of $\cB^\times$, leaving the word ``indecomposable'' implicit.

Since the fermion $e$ squares to the vacuum, the linking of $e$ with any string $x$ is given just by a sign $\pm 1$. Furthermore, the function
$$ \text{link with $e$} : \{\text{strings}\} \to \{\pm 1\}$$
is a homomorphism. Since $e$ links trivially with the Cheshire string, this homomorphism factors through the group of components $\pi_0 \cB$.
Call $x$ \define{magnetically charged} if $e$ links nontrivially with $x$, and \define{magnetically neutral} if they link trivially.

The invertibility of a string $x$ furthermore implies that the self-braiding $\beta_{x,x} : x \otimes x \to x \otimes x$ is either the identity or it is (the identity times) the fermion $e$. This is analogous to asking whether an invertible particle is a boson or a fermion. Due to this, I will say say that $x$ is a \define{bosonic string} if $\beta_{x,x} \cong \id_{x \otimes x}$, and a \define{fermionic string} if $\beta_{x,x} \cong e \otimes \id_{x\otimes x}$. Note that this distinction records some but not all of the self-statistics of $x$. For example, as explained in Remark~\ref{inthewild},
in both topological orders~$\cS$ and~$\cT$ the magnetic string $m$ is ``bosonic,'' but the full braiding data is different in the two cases.

The following lemma should remind the reader of the 2+1D Toric Code and its fermionic cousin $\Spin(8)_1$:
\begin{lemma}
  If $x$ is magnetically neutral, then one of $x$ and $x \otimes c$ is bosonic and the other is fermionic. If $x$ is magnetically charged, then $x$ and $x \otimes c$ are either both bosonic or both fermionic.
\end{lemma}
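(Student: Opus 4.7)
Both $x$ and $x \otimes c$ are invertible strings, since $c$ is invertible by (\ref{fusionrule-cheshire-fermionic}) and every indecomposable object of $\cB$ is invertible; their self-braidings therefore lie in $\pi_1 \cB^\times = \{1, e\}$. My strategy is to apply the familiar quadratic-refinement identity for invertible objects in a braided higher category,
\[
\beta_{x\otimes c,\, x\otimes c} \;=\; \beta_{x,x} \cdot \beta_{c,c} \cdot B(x,c),
\]
where $B(x,c) := \beta_{c,x} \circ \beta_{x,c}$ is the full double braiding. This is the standard hexagon calculation for pre-metric abelian groups (pointed braided $1$-categories), which carries over verbatim to the relevant Postnikov layers $\bC^\times[2].\{1,e\}[1].\pi_0\cB^\times[0]$ of $\cB^\times$ because all objects involved are invertible, so the self-braiding is precisely the quadratic refinement attached to the double-braiding pairing. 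By equation~(\ref{self-braiding-c}) we have $\beta_{c,c} = e$, and so the formula becomes $\beta_{x\otimes c,\, x\otimes c} = \beta_{x,x} \cdot e \cdot B(x,c)$.

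\textbf{Reduction.} The content of the Lemma is then encoded in the identity
\[
B(x,c) \;=\; \ell(x),
\]
where $\ell(x) \in \{1, e\}$ denotes the linking of $x$ with the fermion (with $\ell(x) = 1$ iff $x$ is magnetically neutral, and $\ell(x) = e$ iff $x$ is magnetically charged). Granting this identification, the two cases fall out immediately: if $\ell(x) = 1$ then $\beta_{x\otimes c,\, x\otimes c} = e \cdot \beta_{x,x}$, which swaps the bosonic and fermionic cases; whereas if $\ell(x) = e$ then the factor $e^2 = 1$ disappears and $\beta_{x \otimes c,\, x \otimes c} = \beta_{x,x}$, preserving the bosonic/fermionic character.

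\textbf{The hard step.} The main obstacle is therefore the identity $B(x,c) = \ell(x)$. The physical intuition is that $c$ corresponds to the commutative algebra $\Cliff(1) = 1 \oplus e \in \SVec = \Omega\cB$, so braiding a worldline of $x$ around a worldsheet of $c$ should read off the ``$e$-content'' of $c$. To make this rigorous I would first observe that $B(-, c)$ defines a group homomorphism $\pi_0\cB^\times \to \pi_1\cB^\times = \{1, e\}$, which vanishes on the identity component $\Sigma\SVec$ because $\Sigma\SVec$ is symmetric monoidal and hence has trivial double braidings. Then, using the unit $\one \to c$ of the algebra structure $c \cong 1 \oplus e$ in $\Omega\cB$ together with naturality of the full braiding, I would identify $B(x, c)$ with the action of the double braiding $B(x, -)$ on the non-trivial summand $e$ of this algebra, which by definition is $\ell(x)$. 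Alternatively one can read this off from the Postnikov data of $\Sigma\SVec$ computed in \S\ref{subsec.fer}, where the Steenrod-algebraic relationship between the $c$- and $e$-layers is precisely the obstruction being tracked by $B(x, c)$.
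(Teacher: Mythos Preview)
Your proof is correct and follows essentially the same approach as the paper's. Both arguments reduce to the quadratic-refinement identity (your displayed formula is the paper's equation~(\ref{braidinglemma2})), use $\beta_{c,c} = e$, and then compute the full braiding $B(x,c)$. Where you sketch the ``hard step'' $B(x,c) = \ell(x)$ via naturality and the algebra structure of $c$, the paper makes this precise by observing that $B(x,-)$ restricted to the identity component $\Sigma\SVec$ is a natural automorphism determined entirely by its action on $e$ (hence either $\id$ or the 1-form symmetry $(-1)^F$), and then doing the concrete superalgebra calculation that $(-1)^F$ on $\Cliff(1)$, converted to a bimodule, is parity-reversal --- i.e.\ $e \otimes \id_c$. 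Your Postnikov-data alternative is also valid and amounts to the same thing.
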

\begin{proof}
  As mentioned already (\ref{self-braiding-c}), a superalgebra calculation shows that the Cheshire string itself is fermionic: $\beta_{c,c} \cong e \otimes \id_{c,c}$. The behaviour of the identity component $\Sigma\Omega\cB$ under endofunctors is fully determined by the behaviour of the fermion $e$. In particular, the full braiding map $\beta_{-,x} \circ \beta_{x,-} : \Sigma\Omega\cB \to \Sigma\Omega\cB$ is the identity if $x$ is magnetically neutral, and acts as the 1-form symmetry $(-1)^F$ if $x$ is magnetically charged. It is a fun superalgebra calculation that the automoprhism $(-1)^F$ of $\Cliff(1)$, when converted into a bimodule, is isomorphic to the parity-reversal of the identity bimodule. In other words, the natural automorphism $(-1)^F$ of $\Sigma\SVec$, when evaluated at the Cheshire string $c$, gives $e \otimes \id_c$:
  \begin{equation}\label{braidinglemma1}
    \beta_{c,x} \circ \beta_{x,c} \cong \begin{cases} \id_{x,c}, & \text{if $x$ is magnetically neutral,} \\
    e \otimes \id_{x,c}, & \text{if $x$ is magnetically charged.} \end{cases}
  \end{equation}
  Finally, if $x$ and $y$ are any invertible objects in a braided monoidal $n$-category, then
  \begin{equation}\label{braidinglemma2}
   \beta_{x\otimes y,y \otimes x} \cong \beta_{x,x} \otimes \beta_{y,y} \otimes (\beta_{x,y} \circ \beta_{y,x})
  \end{equation}
  up to identity maps. (The isomorphisms in (\ref{braidinglemma1}--\ref{braidinglemma2}) are not quite canonical in general, but the statement only requires knowing $\beta$ up to isomorphism.)
\end{proof}

But in fact a much stronger statement holds:
\begin{proposition}\label{prop.magbos}
  All magnetically charged strings in $\cB$ are bosonic.
\end{proposition}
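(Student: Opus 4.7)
The plan is to show that the self-braiding $\tau := \beta_{m,m}$ of a magnetically charged invertible string $m$ is trivial. Since $m \otimes m \cong \one$ by equation~(\ref{fusionrule-m-fermionic}), I would first fix an isomorphism $\phi: m\otimes m \isom \one$ and use it to view $\tau$, up to $2$-isomorphism, as an element of $\pi_0\Aut(\one) = \{1,e\}$; the goal is then $\tau = 1$.

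My main tool would be formula~(\ref{braidinglemma2}) applied to the pair $(x,y) = (m,c)$, together with $\beta_{c,c} \cong e$ from (\ref{self-braiding-c}) and $\beta_{c,m}\circ\beta_{m,c} \cong e$ from (\ref{braidinglemma1}) (the latter being where the magnetic charge enters). This computes the ``crossed'' double braiding $\beta_{m\otimes c,\,c\otimes m}$, which via $(m\otimes c)\otimes(c\otimes m) \cong \one$ becomes another element of $\{1,e\}$, and equates that element to $\tau \cdot e \cdot e = \tau$. I would then compute the same crossed braiding by an independent route --- for instance, by using naturality of the braiding with respect to the swap isomorphism $\beta_{c,m}: c\otimes m \isom m\otimes c$ to express it via the self-braiding of $m\otimes c$ (which, by (\ref{braidinglemma2}) again, is itself expressible in terms of $\tau$), or by iterating the hexagon on the three-stranded configuration $m\otimes m\otimes m$ and using $\phi$ twice to compress it back to $m \to m$. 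Comparing the two computations should pin down $\tau$.

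The main obstacle will be keeping track of the $2$-isomorphisms introduced by the hexagonator and associator: equation~(\ref{braidinglemma2}) only asserts an isomorphism of $1$-morphisms, but $\tau$ lives at the level of $2$-isomorphism classes, so the argument must be made honestly at that level. I expect the resulting identity to reproduce, in hands-on form, the differential $d_2: \H^4(\bZ_2[2];\bZ_2) \to \H^6(\bZ_2[2];\bC^\times)$ given by $X \mapsto (-1)^{\Sq^2 X + MX}$ from Remark~\ref{remark.fer.SS}, whose non-vanishing on the class encoding the self-braiding of a magnetic string is precisely the cohomological obstruction that kills the possibility $\tau = e$.
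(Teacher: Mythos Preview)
Your diagnosis is correct: the obstruction to $\tau = e$ is exactly the $d_2$ differential you name at the end. The paper's proof does not attempt a hands-on diagram chase; it pulls the magnetic string back along $\bZ \to \{\text{strings}\}$, so that the question becomes the classification of braided $2$-groupoid extensions of shape $(\Sigma\SVec)^\times.\bZ^f$, and then reads the answer off the Atiyah--Hirzebruch spectral sequence for $\SH^\bullet(\bZ^f[2])$: the $\bZ_2$ in bidegree $(4,1)$ recording $\beta_{m,m}$ supports a nonzero $d_2$ to $\H^6(\bZ[2];\bC^\times)$ and hence dies on the $E_\infty$ page.

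The gap in your proposal is that neither of your concrete routes produces a constraint. Applying~(\ref{braidinglemma2}) to $(m,c)$ gives $\beta_{m\otimes c,\, c\otimes m} \cong \tau$; your proposed recomputation via naturality along $\beta_{c,m}$ only relates this to $\beta_{m',m'}$, which equals $\tau$ again by the very same mechanism --- this is exactly the content of the lemma immediately preceding the Proposition --- so you recover the tautology $\tau = \tau$. Your $m^{\otimes 3}$ route, at the $1$-morphism level, yields only $\tau^2 = 1$, which holds automatically in $\{1,e\}$. The genuine obstruction is the class $(-1)^{M^3} \in \H^6(\bZ[2];\bC^\times)$, coming from the ``$MX$'' (magnetic-linking) term of the twisted differential; it expresses the incompatibility, at the level of hexagonator coherence, between setting $\beta_{m,m} = e$ and the nontrivial $e$--$m$ linking. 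Unpacking that coherence into an explicit $2$-morphism equation is possible in principle, but you have not identified which diagram delivers it; the spectral sequence does that bookkeeping for you.
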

\begin{proof}
  Let $x$ be a magnetic string. It selects a homomorphism $\bZ \to \{\text{strings}\}$. Consider the full braided sub-2-groupoid $\cG$ of $\cB^\times$ tensor-generated by the strings $\one,c,x$ and of course the 1-morphisms $1,e$ and 2-morphisms $\bC^\times$. This groupoid lives over the cyclic subgroup of $\{\text{strings}\}$ generated by $x$; pulling it back along the homomorphism $\bZ \to \{\text{strings}\}$ gives a braided 2-groupoid of shape
  $$ \cG = (\bC^\times[2].\{1,e\}[1].\{\one,c\}[0]).\bZ^f[0],$$
  or in other words a space
  $$ \rB^2 \cG = (\bC^\times[4].\{1,e\}[3].\{\one,c\}[2]).\bZ^f[2].$$
  The supscript $f$ reminds that the base $\bZ$ acts by nontrivial 1-form symmetries of the fibre $(\Sigma\SVec)^\times = \bC^\times[2].\{1,e\}[1].\{\one,c\}[0]$.
  
  As in Remark~\ref{remark.fer.SS}, extensions $\cG$ of this shape can be classified by computing a twisted supercohomology group $\SH^5(\bZ^f[2])$, which can be computed by a spectral sequence. 
  $$ \H^i(\bZ[2]; \SH^j(\pt)) \Rightarrow \SH^{i+j}(\bZ^f[2]).$$
  Note that the Eilenberg--Mac Lane space $\bZ[2] = K(\bZ,2)$ is nothing but $\bC\bP^\infty$, which has a single cell in each even degree. Thus
  the $E_2$ page is very easy to write down. With the nonzero $\d_2$ differentials drawn in, it reads:
  $$
  \begin{tikzpicture}[anchor=base]
\path
(0,0) node {$\bC^\times$} ++(.75,0) node {$0$} ++(.75,0) node (b2) {$\bC^\times$} ++(.75,0) node {$0$} ++(.75,0) node {$\bC^\times$} ++(.75,0) node {$0$} ++(.75,0) node (e2) {$\bC^\times$} ++(.75,0) node {$\cdots$}
(0,.5) node (b1) {$\bZ_2$} ++(.75,0) node {$0$} ++(.75,0) node (a2) {$\bZ_2$} ++(.75,0) node {$0$} ++(.75,0) node (e1) {$\bZ_2$} ++(.75,0) node (c2) {$0$} ++(.75,0) node (d2) {$\cdots$}
(0,1) node (a1) {$\bZ_2$} ++(.75,0) node {$0$} ++(.75,0) node {$\bZ_2$} ++(.75,0) node (c1) {$0$} ++(.75,0) node (d1) {$\bZ_2$} ++(.75,0) node {$\cdots$} 
;
\draw[->] (-.75,-.125) -- ++(6,0);
\draw[->] (-.4,-.5) -- ++(0,2);
\path  (0,-.5) node {$0$} ++(.75,0) node {$1$} ++(.75,0) node {$2$} ++(.75,0) node {$3$} ++(.75,0) node {$4$} ++(.75,0) node {$5$} ++(.75,0) node {$6$} ++(.75,0) node {$i$}
(-.75,0) node {$0$} ++(0,.5) node {$1$} ++(0,.5) node {$2$} ++(0,.5) node {$j$}
;
\draw[thick] 
(a1.mid) -- (a2.mid)
(b1.mid) -- (b2.mid)
(d1.mid) -- (d2.mid)
(e1.mid) -- (e2.mid)
;
\path (b1) ++ (-2,0) node {$E_2^{i,j} =$};
\end{tikzpicture}
  $$
  The self-braiding $\beta_{x,x}$ corresponds to the $\bZ_2$ in bidegree $(4,1)$. The fact that that $\bZ_2$ supports a nonzero differential is equivalent to the statement in the Proposition.
  
  In fact, this calculation shows $\SH^5(\bZ^f[2])$ is trivial, meaning that there is a unique isomorphism class of extensions of shape $(\bC^\times[4].\{1,e\}[3].\{\one,c\}[2]).\bZ^f[2]$.
\end{proof}

\begin{remark}
  It is worth emphasizing that spectral sequence calculations like in the above proof are nothing but a clean way of working with braiding data, etc., in higher monoidal groupoids.
\end{remark}

\begin{corollary}\label{cor.undetection}
  Suppose $x$ is a bosonic magnetically neutral string, and $y$ is a magnetically charged string. Then $x$ and $y$ link trivially: the full braiding $\beta_{y,x} \circ \beta_{x,y}$ is isomorphic to $\id_{x\otimes y}$.
\end{corollary}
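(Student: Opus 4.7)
The plan is to deduce this directly from Proposition~\ref{prop.magbos} by considering the tensor product $x \otimes y$ and applying the self-braiding identity~(\ref{braidinglemma2}) used in the proof of the preceding lemma.

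First I would observe that $x \otimes y$ is itself magnetically charged. Indeed, linking with $e$ is a sign-valued function on the set of strings which factors through a homomorphism $\pi_0 \cB \to \{\pm 1\}$, so since $x$ is magnetically neutral and $y$ is magnetically charged, their tensor product has nontrivial linking with $e$. Then I would apply Proposition~\ref{prop.magbos} twice: both $y$ and $x \otimes y$ are magnetically charged, and therefore bosonic. This gives $\beta_{y,y} \cong \id$ and $\beta_{x\otimes y, x\otimes y} \cong \id$, and by hypothesis also $\beta_{x,x} \cong \id$.

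The final step is to invoke the general identity (\ref{braidinglemma2}) for the self-braiding of a tensor product of invertible objects in a braided monoidal $2$-category:
\[ \beta_{x\otimes y,\, y \otimes x} \cong \beta_{x,x} \otimes \beta_{y,y} \otimes (\beta_{x,y} \circ \beta_{y,x}), \]
up to the canonical identifications coming from invertibility of $x$ and $y$. The left-hand side and the first two factors on the right are all trivial, so $\beta_{x,y} \circ \beta_{y,x} \cong \id$, which is precisely the statement that $x$ and $y$ link trivially.

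There is no real obstacle here: the entire argument is a one-line consequence of Proposition~\ref{prop.magbos} together with the observation that ``magnetic charge'' is a group homomorphism out of $\pi_0 \cB$. The only mild care needed is in interpreting (\ref{braidinglemma2}) in the $2$-categorical setting, but as already noted in the proof of the preceding lemma this holds up to (noncanonical) isomorphism, which is all that is needed to conclude an isomorphism $\beta_{y,x} \circ \beta_{x,y} \cong \id_{x \otimes y}$.
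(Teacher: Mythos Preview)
Your proof is correct and uses essentially the same approach as the paper: both rely on Proposition~\ref{prop.magbos}, the fact that magnetic charge is a homomorphism, and the identity~(\ref{braidinglemma2}). The paper phrases the argument by contradiction (if the linking were nontrivial, one of the magnetically charged strings $y$ and $x\otimes y$ would be fermionic), whereas you give the equivalent direct argument; the content is identical.
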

\begin{proof}
  Otherwise, equation~(\ref{braidinglemma2}) would say that one of the magnetically-charged strings $y$ and $x\otimes y$ 
  is fermionic and the other is bosonic. But this contradicts Proposition~\ref{prop.magbos}.
\end{proof}

\begin{proof}[Proof of the \hyperlink{mainstatement}{Main Theorem}]
  Suppose $\cB$ is a 3+1D topological order with $\Omega\cB \cong \SVec$. Then $\cB$ contains a magnetic string $m$, as otherwise the electron $e$ would be undetectable. If $\pi_0 \cB$ is not just $\bZ_2$, then $\cB$ contains a nontrivial magnetically neutral bosonic string $x$. Because it is magnetically neutral, it is not detected by linking with the fermion $e$ (or the Cheshire string $c$). By Corollary~\ref{cor.undetection}, it is not detected by linking with any magnetically charged string. Furthermore, if it were detected by linking with a magnetically neutral string $y$, then it would be detected by linking with the magnetically charged string $y \otimes m$, again contradicting Corollary~\ref{cor.undetection}. So $x$ is undetectable, violating the remote detectability axiom of topological orders: $x$ would lift to a nontrivial element in the M\"uger centre $\cZ_{(2)}(\cB)$.
  
  Thus $\pi_0 \cB = \{\one,m\} = \bZ_2^F$. The calculations from \S\ref{subsec.fer} then imply that $\cB \cong \cS$ or $\cT$.
\end{proof}

\subsection{Second proof of the Main Theorem: categorified Galois descent} \label{subsec.proof2}

For my second proof of the \hyperlink{mainstatement}{Main Theorem}, I will rely on the analysis of $\Aut(\cS)$ from \S\ref{subsec.autos}, but I will not otherwise require the calculations from \S\ref{subsec.fer}.

The strategy is the following. Given a 3+1D topological order $\cB$, consider the super topological order $\s\cB := \cB \otimes \Sigma\SVec$ given by base-changing along the ``field extension'' $\Vec \to \SVec$; I will call such base-change the \define{superification} of $\cB$. By definition, a 3+1D \define{super topological order} is a nondegenerate braided fusion $2$-\define{supercategory}, meaning an $2$-category enriched over $\SVec$; ``nondegenerate'' means that its enriched M\"uger centre is  $\Sigma\SVec$ (as a $2$-supercategory).

The Tannakian formalism of super fibre functors from \cite{MR1944506} owes its strength to the fact that $\Vec \to \SVec$ is a \define{categorified Galois extension} \cite{MR3623677}. By this I mean in particular that $\Vec$ arises as the fixed points of the 1-form Galois group $\bZ_2^F[1] = \Aut(\SVec) =: \mathrm{Gal}(\SVec/\Vec)$ acting on $\SVec$. Furthermore, \define{Galois descent} says that the algebra of bosonic higher categories is ``the same'' as the algebra of $\bZ_2^F[1]$-equivariant higher supercategories.

For example, any bosonic topological order $\cB$ can be recovered as the fixed points of the $\bZ_2^F[1]$-action on $\s\cB$ induced from the base change $\s\cB = \cB \otimes \Sigma\SVec$. Thus, to classify bosonic topological orders with some property, it suffices to classify supertopological orders with that property, together with their possible Galois actions.
This is a useful and highly generalizable strategy, and is the reason that I give this second proof of the \hyperlink{mainstatement}{Main Theorem}.

Now suppose that $\cB$ has a unique nontrivial particle, and that it is an emergent fermion.
Since I want to work over the ``base field'' $\SVec$, I will not use the same name for $\Omega\cB$. Rather, I will call it $\Omega\cB \cong \Rep(\bZ_2,\epsilon)$, where $\epsilon$ is the nontrivial element in $\bZ_2$, and the notation follows the Tannakian formalism of \cite{MR1944506}: given a finite group $G$ and a central order-2 element $\epsilon \in Z(G)$, $\Rep(G,\epsilon)$ denotes the (bosonic!)\ symmetric fusion 1-category of super representations of $G$ on which $\epsilon$ acts by $(-1)^F$. 
Then $\Omega \s\cB \cong \SVec \otimes \Rep(\bZ_2,\epsilon)$, and it is not hard to show in general that $\s\Rep(G,\epsilon) := \SVec \otimes \Rep(G,\epsilon) \cong \s\Rep(G)$ is independent of the choice $\epsilon$. Indeed, the choice $\epsilon \in Z(G) = \pi_1 \Aut_{\SVec}(\s\Rep(G))$ is nothing but a Galois descent datum to recover $\Rep(G,\epsilon)$ from its superification $\s\Rep(G)$.

The classification argument from \cite{PhysRevX.8.021074} reviewed in Remark~\ref{LKWargument} applies just as well in the super case as in the bosonic case:
\begin{theorem}[{\cite{PhysRevX.8.021074}, \cite[Corollary V.4]{me-TopologicalOrders}}]\label{thm.superLKW}
  Suppose $\s\cB$ is a 3+1D super topological order with $\Omega \s\cB \cong \s\Rep(G)$. Then $\s\cB \cong \cZ_{(1)}(\SVec^\omega[G])$ for some supercohomology class $\omega \in \SH^4(G[1])$. (Here $\cZ_{(1)}$ denotes the super Drinfel'd centre.) \qed
\end{theorem}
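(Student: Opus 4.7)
The plan is to mirror step-by-step the bosonic argument of \cite{PhysRevX.8.021074} reviewed in Remark~\ref{LKWargument}, systematically replacing $\Vec$ by $\SVec$ as the base enriching category. Throughout, the key point is that condensation theory, Morita theory, and the ``boundary determines bulk'' theorem from~\cite{me-TopologicalOrders} are all enrichment-agnostic: they only require a symmetric monoidal ``base,'' and $\SVec$ works as well as $\Vec$.

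First I would produce the analog of the Cheshire-string condensation. The regular representation $\cO(G) \in \s\Rep(G)$ is a commutative algebra object, still commutative when viewed inside the super symmetric fusion category $\s\Rep(G) \cong \Omega\s\cB$. Suspending, it defines a commutative algebra object in $\Sigma\s\Rep(G) \subset \s\cB$, which is a condensation monad in the super version of the sense of~\cite{GJFcond}. Its condensate $\s\cB'$ is a super topological order Morita equivalent to $\s\cB$ over $\Sigma\SVec$; physically $\s\cB$ and $\s\cB'$ are separated by a gapped $2{+}1$D interface. Because $\cO(G)$ exhausts all the simple objects of $\s\Rep(G)$ up to the super unit, one has $\Omega\s\cB' \cong \SVec$, i.e.\ $\s\cB'$ has no nontrivial super line operators.

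Second, I would invoke the super analog of \cite[Remark~IV.3]{me-TopologicalOrders}: a 3+1D super topological order with only the super-trivial line operators is itself super-trivial, i.e.\ $\s\cB' \cong \Sigma\SVec$ (the identity in the super Morita category). Consequently the interface separating $\s\cB$ from $\s\cB'$ is itself a gapped boundary condition for $\s\cB$. Let $\cA$ denote the super fusion 2-category of surface operators on this boundary. The bulk-from-boundary theorem (the super version of the centre reconstruction theorem used in Remark~\ref{drinfeldcentres}, which holds in any suitable enriching base) gives $\s\cB \cong \cZ_{(1)}(\cA)$.

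Third, I would identify $\cA$ as $\SVec^\omega[G]$ by examining how the boundary was built. The construction used only the data of $\cO(G)$ as a commutative algebra in $\s\Rep(G)$; its automorphisms as such form exactly the finite group $G$ in 0-form degree, with no higher-form symmetries (the argument is identical to the bosonic one: commutative algebra automorphisms of $\cO(G)$ in $\s\Rep(G)$ coincide with those in $\Rep(G)$). By \cite{JFYu} in its super form, every indecomposable surface operator on the boundary is invertible, so $\pi_0\cA \cong G$ with each component equivalent to $\Sigma\SVec$. The associator (and all higher coherence) data amount to a braided-monoidal extension $(\Sigma\SVec)^\times . G[0]$, classified exactly by $\SH^4(G[1])$. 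This gives the desired $\cA \cong \SVec^\omega[G]$ for some $\omega \in \SH^4(G[1])$, completing the proof. The main obstacle is verifying that the machinery (condensation monads, centre reconstruction, and the $\SH^\bullet$-classification of extensions) indeed ports to the $\SVec$-enriched setting; each of these is addressed in \cite{me-TopologicalOrders} and the references cited in Remark~\ref{LKWargument}, but one must check carefully that the super enrichment does not introduce additional obstructions, particularly in the step where one identifies automorphisms of $\cO(G)$ with a plain 0-form group $G$.
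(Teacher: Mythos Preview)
Your proposal is correct and takes essentially the same approach as the paper. The paper does not give an independent proof of Theorem~\ref{thm.superLKW}: the statement is marked with \qed after citing \cite{PhysRevX.8.021074} and \cite[Corollary~V.4]{me-TopologicalOrders}, and the sentence preceding it simply asserts that ``the classification argument from \cite{PhysRevX.8.021074} reviewed in Remark~\ref{LKWargument} applies just as well in the super case as in the bosonic case.'' Your write-up is precisely that argument, with the $\Vec \leadsto \SVec$ substitutions spelled out and the necessary caveats about enrichment-independence noted, so you have in fact provided more detail than the paper does.
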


But
$ \SH^4(\bZ_2[1]) = 0.$ This can be computed for example by an Atiyah--Hirzebruch spectral sequence $\H^i(\bZ_2[1]; \SH^j(\pt)) \Rightarrow \SH^{i+j}(\bZ_2[1])$, whose $\d_2$ differentials are $\Sq^2 : \SH^2(\pt) = \bZ_2 \to \SH^1(\pt) = \bZ_2$ and $(-1)^{\Sq^2} : \SH^1(\pt) = \bZ_2 \to \SH^0(\pt) = \bC^\times$. The $E_2$ page with $\d_2$ differentials drawn in reads:
$$
  \begin{tikzpicture}[anchor=base]
\path
(0,0) node {$\bC^\times$} ++(.75,0) node {$\bZ_2$} ++(.75,0) node {$0$} ++(.75,0) node {$\bZ_2$} ++(.75,0) node {$0$} ++(.75,0) node (c2) {$\bZ_2$} ++(.75,0) node  {$\dots$}
(0,.5) node (b0) {$\bZ_2$} ++(.75,0) node {$\bZ_2$} ++(.75,0) node {$\bZ_2$} ++(.75,0) node (c1) {$\bZ_2$} ++(.75,0) node (a2) {$\bZ_2$} ++(.75,0) node (b2) {$\dots$}
(0,1) node {$\bZ_2$} ++(.75,0) node {$\bZ_2$} ++(.75,0) node (a1) {$\bZ_2$} ++(.75,0) node (b1) {$\bZ_2$} ++(.75,0) node  {$\dots$}
;
\draw[->] (-.75,-.125) -- ++(5.25,0);
\draw[->] (-.4,-.5) -- ++(0,2);
\path  (0,-.5) node {$0$} ++(.75,0) node {$1$} ++(.75,0) node {$2$} ++(.75,0) node {$3$} ++(.75,0) node {$4$} ++(.75,0) node {$5$} ++(.75,0)  node {$i$}
(-.75,0) node {$0$} ++(0,.5) node {$1$} ++(0,.5) node {$2$} ++(0,.5) node {$j$}
;
\draw[thick] 
(a1.mid) -- (a2.mid)
(b1.mid) -- (b2.mid)
(c1.mid) -- (c2.mid)
;
\path (b0) ++ (-2,0) node {$E_2^{i,j} =$};
\end{tikzpicture}
$$

Thus  $\s\cB$ is independent of $\cB$, provided only that $\Omega\cB \cong \Rep(\bZ_2)$ or $\Rep(\bZ_2,\epsilon)$. In particular:
$$ \s\cB \cong \s\cR \cong \s\cS \cong \dots.$$
In particular, to classify the choices for $\cB$, it suffices to pick one of these ``understood'' topological orders, for instance $\cS$, and work out the automorphism group and possible Galois descent data on~$\s\cS$.

\begin{remark}
  Superification preserves the set of components. Thus $\pi_0 \cB \cong \pi_0\s\cB \cong \pi_0 \s\cS \cong \pi_0 \cS \cong \bZ_2$, and so one could finish the proof by citing the calculations of \S\ref{subsec.fer}. I will instead analyze the Galois descent data directly.
\end{remark}

Since $\s\cS$ is a super topological order, its $\SVec$-linear automorphism group $\Aut_{\SVec}(\s\cS)$ can be analyzed as in \S\ref{subsec.autos}. In particular, there is a super analogue of the ``topological Noether's theorem'' Proposition~\ref{prop.topologicalNoether}: there is a fibre sequence
\begin{equation}\label{eqn.sNoether}
 \rB\s\cS^\times \to \rB \Aut_{\SVec}(\s\cS) \to \rB(\Sigma^3\SVec)^\times.\end{equation}
As in \S\ref{subsec.proof3}, set $\SW^n := (\Sigma^{n-1}\SVec)^\times$.  Then (\ref{eqn.sNoether}) unfolds to a long exact sequence:
\begin{multline}\label{nother-LES-super}
  1 \to \bC^\times \to \pi_2 (\s\cS^\times) \to \pi_3 \Aut_{\SVec}(\s\cS) \to \pi_0 \SW^1 \to \pi_1 (\s\cS^\times) \to \pi_2 \Aut_{\SVec}(\s\cS) \\
   \to \pi_0\SW^2
  \to \pi_0 (\s\cS^\times) \to \pi_1 \Aut_\SVec(\s\cS) \to \pi_0\SW^3 \to \dots
\end{multline}
The first arrow $\bC^\times \to \pi_2 \s\cB^\times$ is an isomorphism. To describe the other maps, I will need some notation. As above, write $1,e \in \Omega\cS = \Rep(G,\epsilon)$ for the simple objects before superification. These are different from the simple objects $\pi_0 \SW^1$ in the ``base field'' $\SVec$, which I will call $1 := \bC^{1|0}$ and $f := \bC^{0|1}$. With this notation in hand, the arrow $\pi_0(\SVec^\times) = \pi_0\SW^1 \to \pi_1 (\s\cS^\times) = \pi_0 (\s\Rep(\bZ_2,\epsilon)^\times)$ is the inclusion
$$ \{1,f\} \to \{1,f,e,fe\} \cong \bZ_2^2.$$
Similarly, it will be helpful to distinguish the identity-component strings $\one,c\in\cS$ from the ``base field'' strings $\one = \mathrm{Cliff}(0)$ and $a := \mathrm{Cliff}(1)$. In other words, $\pi_0\SW^2 = \pi_0(\Sigma\SVec^\times) = \{1,a\} \cong \bZ_2$.

 It is a slightly nontrivial task to work out $\pi_0 (\s\cS^\times)$. The reason is that to compute $\s\cS = \cS \otimes \Sigma\SVec$, one first forms a ``naive tensor product'' whose objects are just ordered pairs, but then one must Karoubi-complete, and this latter step produces new indecomposable objects, some of which might be invertible.
 In the case at hand, $\pi_0 \s\cS = \bZ_2 = \{1,m\}$, and the question is just to understand the identity component $\Sigma\Omega\s\cS = \Sigma(\s\Rep(\bZ_2,\epsilon))$.
\begin{lemma}
  There are six indecomposable objects in the identity component $\Sigma(\s\Rep(\bZ_2,\epsilon))$: $\one,a,c,ac$, and two that are not invertible.
\end{lemma}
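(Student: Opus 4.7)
My strategy is to identify $\s\Rep(\bZ_2,\epsilon) \cong \SVec \otimes \SVec$, observe that its underlying plain fusion $1$-category is $\Rep(\bZ_2 \times \bZ_2)$ (its four simples are all invertible and form a Klein four-group under fusion), and then invoke Ostrik's classification of module $1$-categories. Since indecomposable semisimple module $1$-categories depend only on the monoidal structure (and not on the braiding or symmetric structure), Ostrik's theorem classifies indecomposable module $1$-categories over $\s\Rep(\bZ_2,\epsilon)$ as pairs $(H \le \bZ_2 \times \bZ_2, [\omega] \in \H^2(H;\bC^\times))$ modulo $G$-conjugation, which is trivial here since $G$ is abelian. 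Enumerating yields one pair for the trivial subgroup, one for each of the three rank-one subgroups (since $\H^2(\bZ_2;\bC^\times) = 0$), and two for the full group (since $\H^2(\bZ_2^2;\bC^\times) \cong \bZ_2$), totalling $1+3+2 = 6$.

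To identify four of these six with invertible indecomposables, I will use that each factor inclusion $\SVec \mono \SVec \otimes \SVec$ induces a monoidal functor $\Sigma\SVec \to \Sigma(\SVec \otimes \SVec)$ which preserves invertibility. Taking external products of the two invertibles in each $\Sigma\SVec$ factor yields the four objects $\one, c, a, ac$ in $\Sigma(\SVec \otimes \SVec)$, each invertible. They are pairwise distinct, as may be checked by counting simples in their associated module $1$-categories (namely $4, 2, 2, 1$, with the two rank-$2$ cases further distinguished by which $\SVec$ factor acts trivially). Matching against the Ostrik classification via the simple-count invariant $[G:H]$, the four invertibles correspond precisely to the pairs with trivial cocycle and $H \in \{\bZ_2^2, \langle(0,1)\rangle, \langle(1,0)\rangle, \{(0,0)\}\}$.

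The main obstacle is confirming that the remaining two $(H,\omega)$-pairs --- namely $(\langle(1,1)\rangle, 0)$ and $(\bZ_2^2, \omega \neq 0)$ --- really do yield non-invertible objects, rather than further invertibles that I have missed above. Both are direct checks using the associated algebra. For the first, the algebra is $\bC \oplus \bC u$ with $u$ of bi-parity $(1,1)$ and $u^2 = 1$; a short computation using the trivial braiding between $(1,1)$ and itself shows that its tensor square in $\Sigma(\SVec \otimes \SVec)$ splits as two copies of itself (paralleling equation (\ref{fusionrule-cheshire}) in the bosonic Cheshire case), which rules out invertibility. For the second, the associated algebra is a twisted group algebra whose module $1$-category is the $1$-simple category of projective $\bZ_2^2$-representations, and a similar fusion-rule computation excludes invertibility.
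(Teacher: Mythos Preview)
Your approach is essentially the paper's: both identify the underlying fusion $1$-category with $\Vec[\bZ_2^2]$ and invoke the classification of simple algebra objects (equivalently, indecomposable module categories) by pairs $(H \le \bZ_2^2,\ \omega \in \H^2(H;\bC^\times))$ to count six indecomposables, then sort them into invertible and non-invertible. There is, however, a genuine gap in your matching step. The simple-count invariant you use is $[G:H]$, but this does \emph{not} distinguish the two pairs $(\bZ_2^2, 0)$ and $(\bZ_2^2, \omega \neq 0)$: both give module categories with exactly one simple. So your claim that $ac$ corresponds to $(\bZ_2^2,\ \text{trivial cocycle})$ is not justified by the simple count alone, and this matters because your subsequent non-invertibility check is aimed specifically at $(\bZ_2^2, \omega \neq 0)$. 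The fix is immediate: compute the algebra $a \otimes c$ directly. Since $f = (1,0)$ and $e = (0,1)$ braid trivially with each other in $\SVec \otimes \SVec$, the two generators commute and the resulting cocycle on $\bZ_2^2$ is trivial.

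Your treatment of the final non-invertible is also too thin. ``A similar fusion-rule computation'' can indeed be carried out --- one finds a degree-$(0,0)$ central idempotent in the tensor square of the twisted group algebra --- but you have not actually done it, and it is less parallel to the first case than the word ``similar'' suggests. The paper's argument here is cleaner: the twisted algebra is isomorphic to $(1 \oplus fe) \otimes a$, a product of a non-invertible object with an invertible one. Alternatively, you could bypass this computation entirely: once you have exhibited four invertibles closed under fusion and one non-invertible, Lagrange's theorem finishes the argument, since the invertible objects form a group of order at most $6$ containing $\bZ_2 \times \bZ_2$, and no group of order $5$ or $6$ has such a subgroup.
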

\begin{proof}
  After forgetting the braiding and supercategory structure, there is a monoidal equivalence $\s\Rep(\bZ_2,\epsilon) \cong \Vec[\bZ_2^2]$. Thus the simple objects in $\Sigma(\s\Rep(\bZ_2,\epsilon))$ are indexed by simple algebra objects in $\Vec[\bZ_2^2]$. It is a general fact for any finite group $G$ that simple algebra objects in $\Vec[G]$ are indexed by pairs $(K,\alpha)$ where $K \subset G$ is a subgroup and $\alpha \in \H^2_{\mathrm{gp}}(K;\bC^\times)$ (see e.g.\ \cite[Example 7.4.10]{EGNO}).
  Specifically, writing $\bC g$ for the simple object of $\Vec[G]$ corresponding to $g \in G$,
   the algebra corresponding to $(K,\alpha)$ is $\bigoplus_{k \in K} \bC k$ made into a twisted group algebra via (any cocycle representative of) $\alpha$.
   
   The group $G = \bZ_2^2$ has five subgroups, and one of them (namely $G$ itself) has two possible choices for $\alpha$.
  It then remains to compute the fusion rules for these algebras, which depends on the braidings. The identity algebra is $\one \leftrightarrow (K=\{1\}, \alpha = \text{triv})$. Two of the nontrivial elements of $G = \bZ_2^2$, namely $f$ and $e$, are fermions in the sense that $\beta_{f,f} = \beta_{e,e} = -1$, and so the  algebras $a = 1 \oplus f$ and $c = 1 \oplus e$, corresponding to two of the $\bZ_2$ subgroups of $\bZ_2^2$, are invertible. Their tensor product is then also invertible: it is one of the algebra structures on $1 \oplus e \oplus f \oplus fe$. On the other hand, $1 \oplus fe$, with its unique algebra structure, is not invertible, since $fe$ is a boson. Finally, the other algebra structure on $1 \oplus e \oplus f \oplus fe$ is isomorphic to the tensor product of the noninvertible algebra $1\oplus fe$ with either of the invertible algebras $a$ and $c$, and so is not invertible.
\end{proof}

Finally, as I will explain in Lemma~\ref{lemma.SWpt}, $\pi_0 \SW^3$ is trivial. Therefore, the long exact sequence (\ref{nother-LES-super}) implies that $\pi_3 \Aut_{\SVec}(\s\cS)$ vanishes and
\begin{equation}\label{autSS}
 \pi_2 \Aut_{\SVec}(\s\cS) \cong \bZ_2 = \{1,e\}, \quad \pi_1 \Aut_\SVec(\s\cS) \cong \bZ_2^2 = \{\one,c,m,m'\}.
\end{equation}
Of course, $\pi_3$ had to vanish because $\s\cS$, being merely a 2-categorical object, has only 2- and lower-form symmetries.
As in the previous sections, $m' := m \otimes c$. The extension $\pi_2[2].\pi_1[1]$ is the same as for $\Aut(\cS)$: $\beta_{c,c} = e$. In other words:
\begin{lemma}\label{lemma.product}
  The canonical map $\Aut(\cS) \to \Aut_{\SVec}(\s\cS)$ is an isomorphism on 1- and 2-form symmetries. \qed
\end{lemma}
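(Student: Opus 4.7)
The plan is to compare the bosonic long exact sequence (\ref{nother-LES}) for $\cB = \cS$ with its super analogue (\ref{nother-LES-super}) for $\s\cS$. Base change along $\Vec \to \SVec$ is a symmetric monoidal operation on the relevant Morita higher categories, so the topological Noether fibration from Proposition~\ref{prop.topologicalNoether} is natural under it, yielding a commutative ladder whose vertical arrows include the maps $\pi_i \cS^\times \to \pi_i \s\cS^\times$, $\pi_j \Aut(\cS) \to \pi_j \Aut_\SVec(\s\cS)$, and $\pi_0 \W^k \to \pi_0 \SW^k$. The goal is to use this ladder to establish the two claimed isomorphisms by a short diagram chase.

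The two rows collapse differently. On the bosonic row, Lemma~\ref{lemma.Wpt} makes $\pi_0 \W^1$, $\pi_0 \W^2$, and $\pi_0 \W^3$ all trivial, so the LES yields $\pi_2 \Aut(\cS) \cong \pi_1 \cS^\times$ and $\pi_1 \Aut(\cS) \cong \pi_0 \cS^\times$. On the super row, $\pi_0 \SW^3$ is still trivial by Lemma~\ref{lemma.SWpt}, but $\pi_0 \SW^1 = \{1,f\}$ and $\pi_0 \SW^2 = \{1,a\}$ contribute, producing short exact sequences
\begin{gather*}
 1 \to \pi_0 \SW^1 \to \pi_1 \s\cS^\times \to \pi_2 \Aut_\SVec(\s\cS) \to 1, \\
 1 \to \pi_0 \SW^2 \to \pi_0 \s\cS^\times \to \pi_1 \Aut_\SVec(\s\cS) \to 1.
\end{gather*}
It will then remain to check that in each sequence the map from $\cS^\times$ furnishes a splitting complementary to the map from $\SW^\bullet$.

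These splittings I would verify by hand using the descriptions already in place. For $\pi_1$, the inclusion $\pi_1 \cS^\times = \{1,e\} \hookrightarrow \pi_1 \s\cS^\times = \{1,e,f,ef\}$ together with $\pi_0 \SW^1 = \{1,f\}$ generates the entire group, so its composite with the quotient map is an isomorphism. For $\pi_0$, the computation of $\pi_0 \s\cS^\times \cong \bZ_2^3$ (with generators $a,c,m$) in the preceding lemma exhibits $\pi_0 \cS^\times = \{\one,c,m,m'\}$ as the subgroup generated by $c$ and $m$, which is complementary to $\pi_0 \SW^2 = \{1,a\}$; again the composite with the quotient is an isomorphism.

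The main obstacle I anticipate is the naturality claim in the first paragraph: one must know that base change respects every piece of data in Proposition~\ref{prop.topologicalNoether}, including the universal 't Hooft anomaly $\kappa$. This is ultimately definitional, since $\kappa$ is read off from the ambient Morita 5-category and base change is a symmetric monoidal functor of such, but it is the sort of bookkeeping step where care is needed to identify the two fibre sequences compatibly. Once that is in hand, the two short computations above yield the desired isomorphisms by a routine diagram chase.
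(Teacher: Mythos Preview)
Your proposal is correct and follows essentially the same route as the paper. The paper's proof is extremely terse: it simply computes both sides via the bosonic and super Noether long exact sequences, obtaining $\pi_2 = \{1,e\}$ and $\pi_1 = \{\one,c,m,m'\}$ with the same extension data in each case, and declares the result. Your commutative-ladder framing makes explicit the naturality step the paper leaves implicit, and your splitting check (that $\{1,e\}$ and $\{1,f\}$ are complementary in $\pi_1\s\cS^\times$, and likewise $\{\one,c,m,m'\}$ and $\{\one,a\}$ in $\pi_0\s\cS^\times$) is exactly the content behind the paper's identification of generators.

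One small presentational point: your first short exact sequence requires that the connecting map $\pi_2\Aut_\SVec(\s\cS) \to \pi_0\SW^2$ vanish, which you only justify later when you note that $\{\one,a\}$ injects into $\pi_0\s\cS^\times$. It would read more cleanly to state that injectivity first.
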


Equation (\ref{autSS}) describes the ``$\SVec$-linear'' 1-form and 2-form symmetries of $\s\cS$. (There are also 0-form symmetries that I did not describe.) But Galois descent data is by definition nonlinear: in addition to $\Aut_{\SVec}(\s\cS)$, we also need to know $\Aut_{\Vec}(\s\cS)$. The trick is that any automorphism restricts to an automorphism of the centre $\Sigma\SVec$, and so there is a fibre sequence
\begin{equation}\label{eqn.galext}
 \rB\Aut_{\SVec}(\s\cS) \to \rB\Aut_{\Vec}(\s\cS) \to \rB\Aut_{\Vec}(\SVec) = \rB\operatorname{Gal}(\SVec/\Vec) = \bZ_2^F[2].
\end{equation}
Galois descent data, of course, consists of a choice of splitting of the extension (\ref{eqn.galext}). But the existence of the bosonic form $\cS$ means that this map splits, and moreover Lemma~\ref{lemma.product} means that it splits as a product (and not just a semidirect product). In other words:
\begin{corollary}
  The natural map $\operatorname{Gal}(\SVec/\Vec) \times \Aut(\cS) \to \Aut_\Vec(\s\cS)$ is an isomorphism on 1- and 2-form symmetries. \qed
\end{corollary}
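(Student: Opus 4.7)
The plan is to unwind the fibre sequence (\ref{eqn.galext}) at the level of homotopy groups and match terms against the known data. Recall that $\rB\Aut_\Vec(\SVec) = \bZ_2^F[2]$ is concentrated in $\pi_2$, which means that $\Aut_\Vec(\SVec)$ is a pure $1$-form symmetry (the Galois action on $\SVec$). Writing out the long exact sequence associated to (\ref{eqn.galext}) in the range of interest gives
\begin{equation*}
  0 \to \pi_2\Aut_{\SVec}(\s\cS) \to \pi_2\Aut_\Vec(\s\cS) \to 0
\end{equation*}
and
\begin{equation*}
  0 \to \pi_1\Aut_{\SVec}(\s\cS) \to \pi_1\Aut_\Vec(\s\cS) \to \bZ_2 \to \pi_0\Aut_{\SVec}(\s\cS),
\end{equation*}
where the $\bZ_2$ in the second sequence is $\pi_2(\bZ_2^F[2]) = \operatorname{Gal}(\SVec/\Vec)$.

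First, I would handle the $2$-form symmetries: the first sequence is already an isomorphism $\pi_2\Aut_{\SVec}(\s\cS) \isom \pi_2\Aut_\Vec(\s\cS)$, and combining with Lemma~\ref{lemma.product} identifies both with $\pi_2\Aut(\cS) \cong \bZ_2$. Since $\operatorname{Gal}$ has no $2$-form part, the natural map $\operatorname{Gal}\times\Aut(\cS) \to \Aut_\Vec(\s\cS)$ restricts to this isomorphism on $\pi_2$.

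For $1$-form symmetries, the existence of the bosonic form $\cS$ produces a splitting of the extension: the construction $\cB \mapsto \s\cB = \cB\otimes\Sigma\SVec$ furnishes a homomorphism $\Aut_\Vec(\SVec) \to \Aut_\Vec(\s\cS)$ sending a Galois automorphism $g$ to $\mathrm{id}_\cS\otimes g$, and composing with the map to the quotient recovers $g$. Hence the connecting map to $\pi_0\Aut_{\SVec}(\s\cS)$ is zero, and we get a short exact sequence
\begin{equation*}
  0 \to \pi_1\Aut_{\SVec}(\s\cS) \to \pi_1\Aut_\Vec(\s\cS) \to \bZ_2 \to 0
\end{equation*}
that splits. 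By Lemma~\ref{lemma.product} the kernel is canonically identified with $\pi_1\Aut(\cS)\cong\bZ_2^2$, and the splitting is given by the Galois action $\operatorname{Gal}(\SVec/\Vec) \to \Aut_\Vec(\s\cS)$ described above. Since $1$-form symmetry groups are abelian by Eckmann--Hilton, ``split extension of abelian groups'' means ``direct product,'' so the combined map $\operatorname{Gal}\times\Aut(\cS) \to \Aut_\Vec(\s\cS)$ is an isomorphism on $\pi_1$.

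The main thing to be careful about is really just that the splitting coming from $\cS\otimes\Sigma\SVec$ commutes with the inclusion $\Aut(\cS) \to \Aut_\Vec(\s\cS)$, so that the map from the product is literally the sum of the two inclusions: but this is built into the base-change construction, since $\mathrm{id}_\cS\otimes g$ commutes (up to canonical isomorphism) with $f\otimes\mathrm{id}_{\Sigma\SVec}$ for any $f\in\Aut(\cS)$. No further input is needed.
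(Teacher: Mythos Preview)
Your argument is correct and follows the same route as the paper: the paper's entire proof is the sentence immediately preceding the Corollary, namely that the bosonic form $\cS$ splits the fibre sequence~(\ref{eqn.galext}) and Lemma~\ref{lemma.product} upgrades the splitting to a product. You have simply made explicit the long exact sequence computation and the Eckmann--Hilton step that the paper leaves to the reader.
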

Since there are no nontrivial $0$-form symmetries of $\SVec$, this implies:
\begin{corollary}
  The possible Galois descent data are precisely the orbits of the $\pi_0\Aut_\Vec(\s\cS)$-action on\\
\mbox{}\hfill  
  $ \pi_0 \operatorname{maps}\bigl( \bZ_2^F[2], \underbrace{\{1,e\}[3].\{\one,c,m,m'\}[2]}_{\text{connective cover of }\rB\Aut(\s\cS)}\bigr).$ \hfill\qed
\end{corollary}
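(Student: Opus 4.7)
The corollary is essentially a tautology from the preceding setup; the plan is to unpack the definition of ``Galois descent datum'' and then cite the previous corollary to get the product-splitting in the relevant range of homotopy.

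First, a Galois descent datum for the action of $\operatorname{Gal}(\SVec/\Vec) = \bZ_2^F[1]$ on $\s\cS$ is, by definition, a (homotopy class of) splittings of the fibre sequence~(\ref{eqn.galext}), i.e., a pointed section $s: \bZ_2^F[2] \to \rB\Aut_\Vec(\s\cS)$ of the projection. Two sections yield isomorphic descent data iff they are related by conjugation by an element of $\Aut_\Vec(\s\cS)$, equivalently by a homotopy of splittings.

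Next, I would invoke the immediately preceding corollary: the natural map $\operatorname{Gal}(\SVec/\Vec)\times \Aut(\cS) \to \Aut_\Vec(\s\cS)$ is an isomorphism on 1- and 2-form symmetries, so in that range the fibre sequence~(\ref{eqn.galext}) is a product. Because $\bZ_2^F[2]$ is 1-connected, any section into $\rB\Aut_\Vec(\s\cS)$ probes only the 1-form and 2-form part of the fibre $\Aut_\SVec(\s\cS)$, and the product splitting in that range identifies the set of sections with the set of maps $\bZ_2^F[2] \to \rB\Aut_\SVec(\s\cS)$. Replacing the target by its 1-connected cover, which by~(\ref{autSS}) is $\{1,e\}[3].\{\one,c,m,m'\}[2]$, gives exactly the mapping space displayed on the right. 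Passing to isomorphism classes by the induced $\pi_0\Aut_\Vec(\s\cS)$-action then yields the orbit description.

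The main subtlety — and the only place that is not purely formal — is keeping track of the Postnikov data: the 1-connected cover $\{1,e\}[3].\{\one,c,m,m'\}[2]$ is \emph{not} a product, its k-invariant being controlled by the nontrivial class $\beta_{c,c}=e$ recorded in Corollary~\ref{cor.autos} and equation~(\ref{autSS}). One must carry this twisting correctly into the enumeration of maps out of $\bZ_2^F[2]$, since the subsequent computation that there are exactly two orbits (and hence exactly two topological orders $\cB$ with $\Omega\cB\cong\SVec$, completing the Main Theorem) will depend on using the correct non-split extension rather than a naive product.
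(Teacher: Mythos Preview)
Your proof is correct and matches the paper's (very terse) argument, just expanded: the paper's entire justification is the single clause ``Since there are no nontrivial $0$-form symmetries of $\SVec$,'' which is exactly your observation that $\bZ_2^F[2]$ is $1$-connected, so that splittings of (\ref{eqn.galext}) only see the $1$- and $2$-form part where the preceding product decomposition applies. One small wording issue: in your first paragraph, ``equivalently by a homotopy of splittings'' is misleading, since conjugation by $\pi_0\Aut_\Vec(\s\cS)$ and homotopy of sections are \emph{different} equivalence relations (the latter gives $\pi_0$ of the mapping space, the former then acts on that $\pi_0$); but this does not affect your argument, and your final paragraph correctly anticipates how the nontrivial $k$-invariant $\beta_{c,c}=e$ enters the subsequent enumeration.
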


\begin{proof}[Proof of the \hyperlink{mainstatement}{Main Theorem}]
A map $\bZ_2^F[2] \to \{1,e\}[3].\{\one,c,m,m'\}[2]$ consists of: a homomorphism $\bZ_2^F \to \{\one,c,m,m'\}$, together with a trivialization of the corresponding class in $\H^4(\bZ_2^F[2]; \{1,e\}) \cong \bZ_2$. If the nontrivial element of $\bZ_2^F$ maps to $x \in \{\one,c,m,m'\}$, then the class that needs to be trivialized is $\beta_{x,x} \in \{1,e\}$, thought of as an element in $\H^4(\bZ_2^F[2]; \{1,e\}) \cong \{1,e\}$. For $x = c$, there is no choice of trivialization (\ref{self-braiding-c}), but there is for $x \in \{1,m,m'\}$, in which case there are $\H^3(\bZ_2^F[2]; \{1,e\}) \cong \bZ_2$ many choices for the trivialization.

This seems to provide $\{1,m,m'\} \times \H^3(\bZ_2^F[2]; \{1,e\}) = 3\times 2 = 6$ options for the Galois descent data. But note that the data must be considered up to the action of $\pi_0\Aut_\Vec(\s\cS) \supset \pi_0 \Aut(\cS) \cong \bZ_{16}$. As explained in Proposition~\ref{prop.mswitch}, the odd elements in this $\bZ_{16}$ exchange the two magnetic strings $m$ and $m'$. Moreover, the different choices in $\H^3(\bZ_2^F[2]; \{1,e\})$ are exchanged if the map $\bZ_2^F \to \{\one,c,m,m'\}$ selects one of the magnetic strings. (This is essentially the same as Lemma~\ref{lemma.autoboson}.)

At the end of the day, there are three choices for the Galois descent data:
\begin{itemize}
  \item The trivial map $\bZ_2^F \to \{\one,c,m,m'\}$, and the trivial trivialization in $\H^3(\bZ_2^F[2]; \{1,e\})$, recovers the topological order $\cS$.
  \item The two nontrivial maps $\bZ_2^F \to \{\one,c,m,m'\}$, and the two choices of trivialization, are all isomorphic. The corresponding bosonic topological $\cB$ order is the fixed-points of the $\bZ_2^F[1]$-action on $\s\cS$. This action assigns a nontrivial charge to the central fermion $f$, and, since it uses the magnetic string, it also assigns a nontrivial charge to the non-central fermion $e$. The only unscreened particle is the boson $ef$, and so $\Omega\cB \cong \Rep(\bZ_2)$ rather than $\Rep(\bZ_2,\epsilon)$. In other words, $\cB \cong \cR$.
  \item The last option is the trivial map $\bZ_2^F \to \{\one,c,m,m'\}$, but with the nontrivial trivialization in $\H^3(\bZ_2^F[2]; \{1,e\})$. It screens the invisible fermion $f$ and the boson $ef$,  leaving the fermion~$e$ unscreened, and so the fixed topological order $\cB$ has $\Omega\cB \cong \Rep(\bZ_2,\epsilon)$ rather than $\Rep(\bZ_2)$.
  
  This truly is a different topological order $\cT$ than the fixed points $\cS$ for the ``trivial'' Galois descent data. (Of course, $\cT$ must be producible from some Galois descent data, and $\cT \not\cong \cS$ by the discussion after (\ref{S-braiding},\ref{T-braiding}).)
   To see the difference in terms of Galois descent data, note first that the 2-form symmetry (of $\s\cS$) sourced by~$e$ has a nontrivial 't Hooft anomaly $(-1)^{\Sq^2 E} \in \H^5(\bZ_2[3]; \bC^\times)$, exactly because~$e$ is a fermion rather than a boson. (The notation is as in \S\ref{subsec.fer}.) This anomaly restricts along the nontrivial map $\Sq^1 \in \H^3(\bZ_2^F[2]; \bZ_2) = \operatorname{maps}(\bZ_2[2],\bZ_2[3])$ to $(-1)^{\Sq^2 \Sq^1 F} \neq 1 \in \H^5(\bZ_2^F[2];\bC^\times)$. In other words, the Galois action that selects $\cT$ is anomalous.
  
  This nontrivial anomaly of the Galois action manifests directly in $\cT$ as the nontrivial anomaly of the magnetic string $m$ described in Remarks~\ref{inthewild} and~\ref{remark.anomaly2}.
  \qedhere
\end{itemize}
\end{proof}

\begin{remark}
  Anomalies of Galois actions are familiar to physicists. Indeed, time-reversing symmetries are $\bC$-antilinear: $\bZ_2^T = \mathrm{Gal}(\bC/\bR)$, and anomalies of $\bZ_2^T$-symmetries are one of the basic ingredients in topological insulators (see e.g.~\cite{1302.6234}).
  
   Mathematicians are also familiar with anomalies of Galois actions. For example, for any finite-dimensional central simple algebra $A$ over a field $\bK$, there is a finite-degree field extension $\bK \to \bL$ and an isomorphism $A\otimes_\bK \bL \cong \mathrm{Mat}(n,\bL)$, the algebra of $n\times n$ matrices over $\bL$. Since $\Aut_\bL(\mathrm{Mat}(n,\bL)) = \mathrm{PGL}(n,\bL)$, $A$ is determined by a Galois descent datum in the twisted cohomology group $\H^1(\rB \mathrm{Gal}(\bL/\bK); \mathrm{PGL}(n,\bL))$. The \define{anomaly} of this Galois symmetry is the class of $A$ in the Brauer group $\mathrm{Br}(\bK)$. Explicitly, there is a ``universal anomaly'' in $\H^2(\rB\mathrm{PGL}(n,\bL);\bL^\times)$ classifying the extension $\mathrm{GL}(n,\bL) = \bL^\times.\mathrm{PGL}(n,\bL)$. Restricting this universal anomaly along the Galois descent datum produces a class in $\H^2(\rB \mathrm{Gal}(\bL/\bK); \bL^\times)$, and $\mathrm{Br}(\bK)$ is the limit of these groups as $\bL$ approaches the separable closure of $\bK$.
\end{remark}

\subsection{Third proof of the Main Theorem: a long exact sequence} \label{subsec.proof3}

The last proof that I will give of the \hyperlink{mainstatement}{Main Theorem} relies on the  classification result \cite[Corollary V.5]{me-TopologicalOrders}. This result extends (and, as mentioned in Remark~\ref{remark.LKW}, corrects) the classifications announced in \cite{PhysRevX.8.021074,PhysRevX.9.021005}, the first of which I reviewed in Remark~\ref{LKWargument}.

I will state this classification as Theorem~\ref{thm.classification}. The statement will require a bit of notation. Much of what I will say below reproduces material from \cite{me-TopologicalOrders}, and in particular Remark~V.2 therein.

As mentioned at the end of \S\ref{subsec.f2c}, the symmetric fusion category $\Vec$ has an infinite sequence of suspensions $\Sigma^\bullet\Vec$ constructed in \cite{GJFcond}. Another name for the fusion $n$-category $\Sigma^{n-1}\Vec$ is ``$n\Vec$'' \cite{2011.02859}; by convention, $0\Vec = \Sigma^{-1}\Vec = \Omega\Vec = \bC$. These categories fit together as a categorical version of a loop spectrum: $\Omega\Sigma^\bullet\Vec = \Sigma^{\bullet-1}\Vec$. In particular, define $\W^n := (n\Vec)^\times = (\Sigma^{n-1}\Vec)^\times$. Then the spaces $\W^\bullet$ fit together into a loop spectrum in the usual sense. I will use the same name for the corresponding generalized cohomology theory: for any space $X$, I will write
$$ \W^\bullet(X) := \pi_0 \operatorname{maps}(X, \W^\bullet).$$
Here and throughout, I will use cohomological degree conventions. The \define{homotopy groups} of a spectrum use the opposite conventions:
$$ \pi_n \W := \W^{-n}(\pt) = \pi_0(\W^{-n}).$$

\begin{remark}
  Note that $\W^0 = \bC^\times$ (with the discrete topology), and so $\W^\bullet$ is coconnective: its positive-homotopical-degree homotopy groups (the negative cohomology of a point) vanish. This allows for many simplifications. For example, coconnectivity implies that all Atiyah--Hirzebruch spectral sequences for $\W$-cohomology  converge strongly \cite{MR1718076}. It also implies that Borel-equivariant cohomology, i.e.\ the cohomology of the classifying spaces, is genuinely equivariant. Finally, coconnectivity allows all the subtleties of Brown's representability theorem to be ignored: when working with non-coconnective spectra, there can be nontrivial maps which induce the zero map on cohomology theories, but these do not occur for coconnective spectra. As such, I will freely use the terms ``generalized cohomology theory'' and ``spectrum'' interchangeably.
\end{remark}

Every object of $\Sigma^n\Vec$ is $\Sigma\cA$ for some fusion $(n{-}1)$-category $\cA$, defined up to Morita equivalence of fusion $(n{-}1)$-categories. (With ``fusion'' replaced by ``multifusion,'' this is essentially true by construction; see e.g.~\cite[Theorem 3.21]{2011.02859}. That one can always select $\cA$ to be fusion requires that $\bC$ is algebraically closed: one needs the fact that the commutative algebra $\Omega^{n-1}\cA$ admits a map to $\bC$.) An object in $\Sigma^n\Vec$ is invertible if and only if the corresponding fusion $(n{-}1)$-category $\cA$ has trivial Drinfel'd centre \cite[Theorem~2]{me-TopologicalOrders}, in which case it is $\cA = \Sigma\cB$ for a nondegenerate braided fusion $n$-category $\cB = \Omega\cA$ \cite[Theorem~4]{me-TopologicalOrders}. In other words:
$$ \pi_0 \W^n = \frac{\{\text{invertible fusion $(n{-}2)$-categories}\}}{\text{Morita equivalence}} = \frac{\{\text{nondeg braided fusion $(n{-}3)$-categories}\}}{\text{Morita equivalence}}.$$
In particular, $\pi_0 \W^4 = \cW$ is the ``bosonic Witt group'' of nondegenerate braided fusion $1$-categories studied in \cite{MR3039775}. This is why I chose the letter ``$\cW$'' for the spectrum. 

\begin{lemma}\label{lemma.Wpt}
The low-degree homotopy groups of $\W$ are:
\begin{enumerate} \setcounter{enumi}{-1}
  \item $\W^0(\pt) = \bC^\times$.
  \item $\W^1(\pt) = 0$, because every $\otimes$-invertible vector space is one-dimensional. 
  \item $\W^2(\pt) = 0$, because every central simple algebra over $\bC$ is Morita-trivial.
  \item $\W^3(\pt) = 0$, because a braided fusion $0$-category is simply a commutative algebra over $\bC$, and the only one with trivial centre is $\bC$ itself.
  \item $\W^4(\pt) = \cW$ is the bosonic Witt group of \cite{MR3039775}.\qed
\end{enumerate}
\end{lemma}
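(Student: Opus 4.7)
The plan is to reduce each of the five statements to a known low-degree fact using the two identifications
$$\pi_0\W^n \;=\; \{\text{invertible fusion $(n{-}2)$-categories}\}/\text{Morita} \;=\; \{\text{nondegenerate braided fusion $(n{-}3)$-categories}\}/\text{Morita}$$
recalled immediately before the lemma. Items (0) and (1) I would dispatch tautologically: the convention $0\Vec = \Omega\Vec = \bC$ gives $\W^0 = \bC^\times$, and $\otimes$-invertibility of a finite-dimensional complex vector space forces $\dim = 1$, leaving only the isomorphism class of $\bC$ itself.

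For item (2), I would apply the first identity with $n=2$: invertible fusion $0$-categories modulo Morita equivalence are precisely finite-dimensional central simple $\bC$-algebras modulo Morita equivalence, i.e.\ the Brauer group of $\bC$. Artin--Wedderburn, together with the fact that $\bC$ is algebraically closed, identifies every such algebra with some matrix algebra $\operatorname{Mat}_k(\bC)$, all of which are Morita equivalent to $\bC$. For item (3), I would use the second identity with $n=3$: a braided fusion $0$-category is a finite-dimensional commutative $\bC$-algebra $A$, and its $E_2$- (= M\"uger) centre coincides with $A$ itself because all pairs already braid trivially in the $0$-categorical setting; nondegeneracy thus forces $A \cong \bC$, and two commutative algebras are Morita equivalent only when isomorphic.

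Item (4) is, with the second identity applied at $n=4$, essentially the definition of the bosonic Witt group $\cW$ of \cite{MR3039775}; the one thing I would want to check is that the abelian group structure on $\pi_0\W^4$ inherited from the symmetric monoidal structure on the spectrum $\W^\bullet$ agrees with the Deligne tensor product of braided fusion categories modulo Morita, but this is built into the construction $\W^n = (\Sigma^{n-1}\Vec)^\times$ of \cite{GJFcond,me-TopologicalOrders}. None of the five items has any genuine difficulty in isolation; the only nontrivial input is the dictionary between ``invertible object of $n\Vec$'' and ``Morita class of nondegenerate braided fusion $(n{-}3)$-category'' stated just above the lemma, and so the main obstacle is really just being comfortable with that dictionary before carrying out each of these short verifications.
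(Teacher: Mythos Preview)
Your proposal is correct and matches the paper's approach: the paper embeds its justifications directly in the statement of the lemma and ends with \qed, giving no separate proof, and your expansion simply unpacks those one-line reasons using the dictionary stated immediately above the lemma. The only addition you make beyond the paper's text is the remark in item~(3) that Morita-equivalent commutative algebras are isomorphic, which is true but unnecessary once you have shown that $\bC$ is the unique nondegenerate one.
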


There is also a ``super'' version $\SW$ of the spectrum $\W$:
$$ \SW^\bullet := (\Sigma^{n-1}\SVec)^\times.$$
Its homotopy groups measure the super topological orders up to Morita equivalence. In particular:
\begin{lemma}\label{lemma.SWpt}
\begin{enumerate} \setcounter{enumi}{-1}
  \item $\SW^0(\pt) = \bC^\times$.
  \item $\SW^1(\pt) = \pi_0 \SVec_\bC^\times = \{\bC^{1|0},\bC^{0|1}\} \cong \bZ_2$. Every $\otimes$-invertible super vector space is one-dimensional, but in the super world, that one dimension can be either bosonic or fermionic.
  \item $\SW^2(\pt) = \pi_0 \cat{SAlg}_\bC^\times = \{\bC, \Cliff(1)\} \cong \bZ_2$. Every central simple superalgebra over $\bC$ is isomorphic either to a super matrix algebra (Morita trivial)  or  to an odd Clifford algebra (Morita-equivalent to $\Cliff(1)$).
  \item $\SW^3(\pt) = 0$. The bosonic proof applies without change.
  \item $\SW^4(\pt) = \rS\cW$ is the ``fermionic Witt group'' of \cite{MR3022755}.\qed
\end{enumerate}
\end{lemma}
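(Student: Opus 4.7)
The plan is to apply, in each case, the super analogue of the general correspondence from \cite[Theorems~2 and~4]{me-TopologicalOrders}: an invertible object of $\Sigma^{n-1}\SVec$ is, up to super Morita equivalence, the suspension $\Sigma\cA$ of a fusion $(n-2)$-supercategory $\cA$ with trivial Drinfel'd centre, which in turn has the form $\cA = \Sigma\cB$ for a nondegenerate braided fusion $(n-3)$-supercategory $\cB$.  With this dictionary in hand, each of the five items becomes a concrete low-dimensional classification, and the template of Lemma~\ref{lemma.Wpt} can be followed verbatim, with the base field $\Vec$ replaced by $\SVec$.

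For $n=0,1$, I would read the answer off directly: $\End_{\SVec}(\bC) = \bC$ contributes $\bC^\times$ in degree $0$, and the two one-dimensional super vector spaces $\bC^{1|0}$, $\bC^{0|1}$ contribute $\bZ_2$ in degree $1$.  For $n=4$, the definition matches by construction the super Witt group $\rS\cW$ of \cite{MR3022755}.  For $n=3$, the bosonic argument transports unchanged: a braided fusion $0$-supercategory is a (super)commutative algebra $A$ in $\SVec$, and the M\"uger centre of a commutative algebra equals the algebra itself, so nondegeneracy forces $A \cong \bC$.

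The only step that requires more than a transcription of Lemma~\ref{lemma.Wpt} is $n=2$, which I would handle by invoking the classical computation of the super Brauer group of $\bC$ due to C.~T.~C.~Wall: it is cyclic of order two, generated by $\Cliff(1)$.  Concretely, every finite-dimensional central simple superalgebra over $\bC$ is either of the form $\End(\bC^{p|q})$ or $\End(\bC^{p|q}) \otimes \Cliff(1)$, and the well-known super Morita equivalence $\Cliff(1) \otimes \Cliff(1) \cong \Cliff(2) \simeq \operatorname{Mat}(1|1) \simeq \bC$ witnesses $[\Cliff(1)]^2 = 1$ in $\pi_0 \cat{SAlg}_\bC^\times$.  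The main (minor) obstacle is simply making the super versions of \cite[Theorems~2 and~4]{me-TopologicalOrders} explicit enough to set up the dictionary above; once that is done, the five cases are parallel to their bosonic counterparts.
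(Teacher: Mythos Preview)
Your proposal is correct and matches the paper's approach exactly. The paper treats this lemma as self-proving: the justifications are given inline within the enumerated items themselves (note the \qedhere\ at the end of the statement), and your elaboration---direct inspection for $n=0,1$, Wall's super Brauer group for $n=2$, the commutative-algebra argument for $n=3$, and identification with $\rS\cW$ by definition for $n=4$---reproduces precisely those inline remarks.
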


Furthermore, Theorem~\ref{thm.superLKW} implies:
\begin{lemma}\label{lemma.SW5}
  Every 3+1D super topological order is Morita trivial, i.e.\ $\SW^5(\pt) = 0$. \qed
\end{lemma}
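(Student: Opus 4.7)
The plan is to invoke Theorem~\ref{thm.superLKW} and then observe that Drinfel'd centres are always Morita trivial. Let $\s\cB$ be any $3{+}1$D super topological order; its loop $1$-supercategory $\Omega\s\cB$ is a symmetric fusion $1$-supercategory over $\SVec$. By the super analogue of Deligne's Tannakian theorem, together with the absorption $\s\Rep(G,\epsilon) \cong \s\Rep(G)$ recalled in \S\ref{subsec.proof2}, we can write $\Omega\s\cB \cong \s\Rep(G)$ for some finite group $G$ (the possible super structures are all absorbed into the $\SVec$-enrichment). Theorem~\ref{thm.superLKW} then identifies $\s\cB \cong \cZ_{(1)}(\SVec^\omega[G])$ for some $\omega \in \SH^4(G[1])$.

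It remains to check that for any fusion $2$-supercategory $\cA$, the super topological order $\cZ_{(1)}(\cA)$ represents $0 \in \SW^5(\pt)$. This follows because $\cA$ itself, viewed as a (right) $\cZ_{(1)}(\cA)$-module via the canonical action, is a gapped boundary condition for $\cZ_{(1)}(\cA)$; equivalently, $\cA$ witnesses an invertible $1$-morphism $[\cZ_{(1)}(\cA)] \simeq [2\SVec]$ in the appropriate Morita $5$-category of braided fusion $2$-supercategories. This is a direct super analogue of the standard bosonic statement that every Drinfel'd-centre topological order has a gapped boundary, and is part of the same package of results from \cite{me-TopologicalOrders} that underlies Theorem~\ref{thm.superLKW}. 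Applied to $\cA = \SVec^\omega[G]$, it completes the proof that $\SW^5(\pt) = 0$.

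The two conceptual inputs are super Tannakian duality and the Morita triviality of Drinfel'd centres, and neither is really an obstacle here: the first is standard, and the second is essentially built into the Morita-theoretic machinery of \cite{me-TopologicalOrders} used to formulate Theorem~\ref{thm.superLKW} in the first place. The only mild subtlety worth noting is that although $\SVec^\omega[G]$ is typically \emph{not} invertible in the Morita $4$-category of fusion $2$-supercategories (its centre $\s\cB$ is nontrivial), its suspension sits as an invertible $1$-morphism in the higher Morita theory, which is what produces the Morita equivalence $[\s\cB] \simeq [2\SVec]$ in $\SW^5(\pt)$.
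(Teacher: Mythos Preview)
Your proposal is correct and follows the same approach as the paper: the paper's proof is literally just the sentence ``Theorem~\ref{thm.superLKW} implies'' followed by a \qed, and you have correctly unpacked the two implicit steps --- super Tannakian duality (so that $\Omega\s\cB \cong \s\Rep(G)$ for some finite $G$, using the absorption $\s\Rep(G,\epsilon)\cong\s\Rep(G)$ from \S\ref{subsec.proof2}) and the Morita-triviality of Drinfel'd centres. Your final paragraph is slightly awkwardly phrased but the content is fine: the relevant fact is simply that $\cA$, as a bimodule for $\cZ_{(1)}(\cA)$ and the unit, furnishes the required Morita equivalence in $\Mor_2$.
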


\begin{remark}
  The extended supercohomology $\SH^\bullet$ used in Remark~\ref{remark.fer.SS} is the connective cover of $\SW^\bullet$ in which one only uses the first four homotopy groups. In other words, for any space $X$ there is a map $\SH^\bullet(X) \to \SW^\bullet(X)$ which is an isomorphism in degrees $\bullet < 4$ and an injection in degree $\bullet = 4$.
\end{remark}

Since $\SW$ was built from $\SVec$, it inherits an action by the (purely 1-form) categorified Galois group $\operatorname{Gal}(\SVec/\Vec) = \Aut_\Vec(\SVec) = \bZ_2^F[1]$. Categorified Galois descent then says that $\W^\bullet$ is nothing but the fixed-point spectrum of the action of $\bZ_2^F[1]$ on $\SW^\bullet$. In particular, the $\W^\bullet$-cohomology of a point is precisely the \emph{twisted} $\SW^\bullet$-cohomology of the classifying space $\bZ_2^F[2] = \pt/\bZ_2^F[1]$ of $\operatorname{Gal}(\SVec/\Vec)$. Consistent with Remark~\ref{remark.fer.SS}, I will write  simply $\SW^\bullet(\bZ_2^F[2])$ for this twisted cohomology:
\begin{equation}\label{coh.gal}
 \W^\bullet(\pt) = \SW^\bullet(\bZ_2^F[2]).
\end{equation}
More generally, given a space $X$ with a $\bZ_2^F[1]$-action, I will write $X / \bZ_2^F[1]$ for the corresponding homotopy quotient, and $\SW^\bullet(X / \bZ_2^F[1])$ for its twisted cohomology.

There is an Atiyah--Hirzebruch spectral sequence computing the right-hand side $\SW^\bullet(\bZ_2^F[2])$ of~(\ref{coh.gal}):
\begin{equation}\label{ahss.sw}
 \H^i(\bZ_2^F[2]; \SW^j(\pt)) \Rightarrow \SW^{i+j}(\bZ_2^F[2]).
\end{equation}
Note that $\bZ_2^F[1]$ acts trivially on the 0-form groups $\SH^j(\pt)$.
If the cohomology were untwisted, then the $\d_2$ differentials would be simply the Postnikov k-invariants for $\SW$, which in the bottom degrees are:
$$ (-1)^{\Sq^2} : \SW^1(\pt) \to \SW^0(\pt), \qquad \Sq^2 : \SW^2(\pt) \to \SW^1(\pt).$$
The twisting, however, modifies these. Write $T \in \H^2(\bZ_2^F[2]; \bZ_2)$ for the degree-2 generator; the whole cohomology ring $\H^\bullet(\bZ_2[2]; \bZ_2)$ is freely generated by $T$ under the Steenrod-algebra action. Then the twisted $\d_2$ differentials for the spectral sequence (\ref{ahss.sw}) are:
$$ \d_2 = (-1)^{T + \Sq^2} : \SW^1(\pt) \to \SW^0(\pt), \qquad \d_2 = T + \Sq^2 : \SW^2(\pt) \to \SW^1(\pt).$$
 Thus the $E_2$ page with nonzero $\d_2$ differentials drawn in reads:
\begin{equation*}
\begin{tikzpicture}[anchor=base]
\path
(0,0) node {$\bC^\times$} ++(.75,0) node {$0$} ++(.75,0) node (b2) {$\bZ_2$} ++(.75,0) node {$0$} ++(.75,0) node {$\bZ_4$} ++(.75,0) node {$\bZ_2$}++(.75,0) node (e2) {$\cdots$}
(0,.5) node (b1) {$\bZ_2$} ++(.75,0) node {$0$} ++(.75,0) node (a2) {$\bZ_2$} ++(.75,0) node {$\bZ_2$} ++(.75,0) node (e1) {$\bZ_2$} ++(.75,0) node (c2) {$\bZ_2^2$} ++(.75,0) node (d2) {$\cdots$}
(0,1) node (a1) {$\bZ_2$} ++(.75,0) node {$0$} ++(.75,0) node {$\bZ_2$} ++(.75,0) node (c1) {$\bZ_2$} ++(.75,0) node (d1) {$\bZ_2$} ++(.75,0) node {$\cdots$} 
(0,1.5) node {$0$} ++(.75,0) node {$0$}++(.75,0) node {$0$}++(.75,0) node {$0$}++(.75,0) node {$0$} ++(.75,0) node {$\cdots$} 
(0,2) node {$\rS\cW$}  ++(.75,0) node {$0$}  
(0,2.5) node {$0$}
;
\draw[->] (-.75,-.125) -- ++(5.25,0);
\draw[->] (-.4,-.5) -- ++(0,3.5);
\path  (0,-.5) node {$0$} ++(.75,0) node {$1$} ++(.75,0) node {$2$} ++(.75,0) node {$3$} ++(.75,0) node {$4$} ++(.75,0) node {$5$} ++(.75,0) node {$i$}
(-.75,0) node {$0$} ++(0,.5) node {$1$} ++(0,.5) node {$2$} ++(0,.5) node {$3$} ++(0,.5) node {$4$} ++(0,.5) node {$5$} ++(0,.5) node {$j$}
;
\draw[thick] 
(a1.mid) -- (a2.mid)
(b1.mid) -- (b2.mid)
(c1.mid) -- (c2.mid)
(d1.mid) -- (d2.mid)
(e1.mid) -- (e2.mid)
;
\end{tikzpicture}
\end{equation*}
The differentials supported in bidegrees $(i,j) = (0,1)$ and $(0,2)$ are forced by Lemma~\ref{lemma.Wpt}, which says that in the limit there can be no cohomology of total degree $i+j \in \{1,2,3\}$. Furthermore, in degree $i+j = 4$ the limit of this spectral sequence is the bosonic Witt group $\cW$, and the restriction to bidegree $(i,j) = (0,4)$ is the canonical map  $\cW \to \rS\cW$. This map is known to have kernel of order $16$ \cite[Propositions~5.13 and~5.14]{MR3022755}, and so:
\begin{lemma}\label{lemma.nod3}
All of the $\bZ_2$s and $\bZ_4$ in total degree $4$ survive to the $E_\infty$ page. \qed
\end{lemma}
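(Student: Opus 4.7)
The approach is a pure counting argument that completely avoids identifying any higher-page differentials. The preceding paragraph has already supplied both pieces of input needed: the abutment in total degree $4$ is the bosonic Witt group $\cW$, and the edge map identifies $E_\infty^{0,4}$ with the image of the canonical comparison $\cW \to \rS\cW$. Because the $E_2$-page entries at bidegrees $(1,3)$ and $(0,5)$ vanish, the kernel $F^{\geq 1}\cW = \ker(\cW \to \rS\cW)$ admits a filtration whose associated graded consists of exactly three pieces, namely $E_\infty^{2,2}, E_\infty^{3,1}$, and $E_\infty^{4,0}$.

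From here the proof is immediate. Each $E_\infty^{i,j}$ is a subquotient of the corresponding $E_2$ entry, so
$$|\ker(\cW \to \rS\cW)| = |E_\infty^{2,2}| \cdot |E_\infty^{3,1}| \cdot |E_\infty^{4,0}| \leq |E_2^{2,2}| \cdot |E_2^{3,1}| \cdot |E_2^{4,0}| = 2 \cdot 2 \cdot 4 = 16.$$
By \cite[Propositions~5.13 and~5.14]{MR3022755} the kernel has order exactly $16$, so both inequalities must be equalities, and each $E_\infty^{i,j}$ coincides with its $E_2^{i,j}$ counterpart. This is precisely the claim.

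The one point I expect to need care with, and the only substantive content beyond the numerics, is the identification of the edge map $\cW = \SW^4(\bZ_2^F[2]) \twoheadrightarrow E_\infty^{0,4} \hookrightarrow \rS\cW$ with the forgetful comparison between bosonic and super Witt groups. This is a naturality statement compatible with the Galois-descent presentation~(\ref{coh.gal}): restriction to the trivial Galois-equivariant structure recovers the underlying super object, which on Morita classes of nondegenerate braided fusion $1$-categories is precisely the map $\cW \to \rS\cW$. With this identification in hand, no analysis of higher differentials is needed — the spectral-sequence numerics have been arranged so that the order of the kernel forces every $d_r$ with $r \geq 3$ into or out of these three bidegrees to vanish.
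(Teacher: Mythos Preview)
Your argument is correct and is exactly the one the paper intends: the text immediately preceding the lemma already records that the abutment in total degree~$4$ is $\cW$, that the edge map to bidegree $(0,4)$ is the canonical comparison $\cW \to \rS\cW$, and that this map has kernel of order~$16$ by \cite[Propositions~5.13 and~5.14]{MR3022755}; the counting you spell out is the implicit content of the ``and so'' that precedes the \qed. Your only superfluous remark is the mention of bidegree $(0,5)$, which plays no role in bounding the filtration pieces of $\ker(\cW\to\rS\cW)$ --- the vanishing at $(1,3)$ alone suffices.
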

\begin{remark}\label{remark.ssMME}
  The $\d_4$ differential automatically vanishes.  The only other potential differential in this range of degrees is $\d_5 : \rS\cW = E_5^{0,4} \to \bZ_2 = E_5^{5,0}$. Either it vanishes, in which case $\W^5(\pt) = \bZ_2$ and $\cW \to \rS\cW$ is a surjection, or it does not vanish, in which case $\W^5(\pt) = 0$ and $\operatorname{coker}(\cW \to \rS\cW) \cong \bZ_2$.
  
  I do not know which case occurs, but the good money is on the former. This is because, as pointed out by Dmitri Nikshych (see \cite[Remark~V.2]{me-TopologicalOrders}), the surjectivity of the map $\cW \to \rS\cW$ is equivalent to the conjecture that every slightly-degenerate braided fusion 1-category admits a minimal modular extension, and most experts believe that conjecture.
\end{remark}

I will need one more notion in order to give the complete classification from \cite{me-TopologicalOrders}. For any space $X$, there is a canonical map $\pi : X \to \pt$, and so for any cohomology theory $E$ a restriction map $\pi^* : E^\bullet(\pt) \to E^\bullet(X)$. The \define{reduced}  $E^\bullet$-cohomology of $X$, denoted $\widetilde{E}^\bullet(X)$, is by definition the homotopy cokernel of this map: a cocycle for a class in $\widetilde{E}^n(X)$ is a pair $(\alpha,\beta)$, where $\alpha$ is a cocycle for a class in $\widetilde{E}^{n+1}(\pt)$, and $\beta$ is a degree-$n$ cochain on $X$ solving $\d \beta = \pi^*\alpha$. Fix a point $x \in X$. This choice provides a restriction $E^\bullet(X) \to E^\bullet(\{x\})$, and the composition $E^\bullet(\pt) \to E^\bullet(X) \to E^\bullet(\{x\})$ is the identity. As a result, $E^\bullet(\pt)$ is a direct summand of $E^\bullet(X)$, and is simply the other direct summand:
\begin{equation}\label{eqn.reducessum}
 \widetilde{E}^\bullet(X) = E^\bullet(X) \ominus E^\bullet(\pt).\end{equation}

Now suppose that $E^\bullet$ is a twisted $\cG$-equivariant cohomology theory, for instance $E^\bullet = \SW^\bullet$ and $\cG = \bZ_2^F[1]$. Then there is still a \define{reduced} twisted-equivariant $E^\bullet$-cohomology of any $\cG$-space $X$. As above, let $E^\bullet(X/\cG)$ denote  the unreduced twisted $\cG$-equivariant cohomology of $X$. The reduced cohomology $\widetilde{E}^\bullet(X/\cG)$ is defined to be the homotopy cokernel of the map $\pi^* : E^\bullet(\pt/\cG) \to E^\bullet(X/\cG)$ that restricts along the canonical map $X/\cG \to \pt/\cG$ (i.e.\ the canonical $G$-equivariant map $X \to \pt$). 

If $X$ contains a $\cG$-fixed point $x \in X$ (equivalently, if the map $X/\cG \to \pt/\cG$ splits), then the discussion leading to equation~(\ref{eqn.reducessum}) describes $\widetilde{E}^\bullet(X/\cG)$ as a direct summand of $E^\bullet(X/\cG)$. But in general, there are no such fixed points. Indeed, $\pi^* : E^\bullet(\pt/\cG) \to E^\bullet(X/\cG)$ can fail to be an injection. Rather, in place of equation (\ref{eqn.reducessum}), there is a long exact sequence:
\begin{equation}\label{eqn.LES}
 \dots \to E^\bullet(\pt/\cG) \to E^\bullet(X/\cG) \to \widetilde{E}^\bullet(X/\cG) \to E^{\bullet+1}(\pt/\cG) \to E^{\bullet+1}(X/\cG) \to \dots 
\end{equation}

With these concepts in place, I can now state the promised generalization \cite[Corollary V.5]{me-TopologicalOrders} of the classification described in Remark~\ref{LKWargument}:
\begin{theorem} \label{thm.classification}
  Let $G$ be a finite group. An action of $\bZ_2^F[1]$ on the classifying space $\rB G$ is the same data as a central element $\epsilon \in Z(G)$ such that $\epsilon^2 = 1$; the action is nontrivial if $\epsilon \neq 1 \in G$, in which case $BG$ contains no $\bZ_2^F[1]$-fixed points. Let $\Rep(G,\epsilon)$ denote the corresponding (bosonic) symmetric fusion category: it consists of superrepresentations of $G$ on which $\epsilon$ acts by $(-1)^f$.
  Then the 3+1D topological orders $\cB$ with line operators $\Omega\cB \cong \Rep(G,\epsilon)$ are classified by 
  $\widetilde\SW^4(\rB G/\bZ_2^F[1])$, 
   the {reduced $\bZ_2^F[1]$-twisted-equivariant} degree-$4$ $\SW$-cohomology of $\rB G$. \qed
\end{theorem}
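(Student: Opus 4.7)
The plan is to reduce to the super setting via categorified Galois descent, classify super topological orders by extending the condensation argument of Remark~\ref{LKWargument}, and reassemble equivariantly using the machinery of twisted-equivariant $\SW$-cohomology.

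For the descent step, given a bosonic topological order $\cB$ with $\Omega\cB \cong \Rep(G,\epsilon)$, the superification $\s\cB = \cB \otimes \Sigma\SVec$ has $\Omega\s\cB \cong \s\Rep(G)$ (independent of $\epsilon$, by the discussion in \S\ref{subsec.proof2}), and inherits a canonical $\bZ_2^F[1]$-action whose restriction to the identity component $\Sigma\Omega\s\cB$ is classified by an element of $\pi_1\Aut_\SVec(\s\Rep(G)) = Z(G)$ which recovers $\epsilon$. Conversely, by categorified Galois descent \cite{MR3623677}, the data of $\cB$ is equivalent to the data of $\s\cB$ together with a $\bZ_2^F[1]$-equivariant structure whose character on the identity component is $\epsilon$. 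Thus the problem reduces to classifying $\bZ_2^F[1]$-equivariant super topological orders with line operators $\s\Rep(G)$.

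For the super classification step, I would adapt Remark~\ref{LKWargument} to the super setting. The commutative algebra object $\cO(G) \in \s\Rep(G)$ is condensible; inside $\Sigma\Omega\s\cB \subset \s\cB$ it provides a condensation monad in the sense of \cite{GJFcond} whose image has no nontrivial particles and so, by \cite[Remark~IV.3]{me-TopologicalOrders}, is Morita trivial. The resulting gapped boundary condition realizes $\s\cB \cong \cZ_{(1)}(\s\cA)$ for a fusion 2-supercategory $\s\cA$ of boundary operators, equipped with a $0$-form $G$-symmetry from the automorphisms of $\cO(G)$. A super analogue of \cite[Theorem~B]{JFYu} establishes that every indecomposable boundary surface operator is invertible, so that $\s\cA$ has invertible identity component; combined with the definition of $\SW$ as the spectrum built from invertible fusion 2-supercategories, this identifies the Morita class of $\s\cA$ with an element of $\SW^4(\rB G)$. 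The ambiguity of boundary-changing Morita equivalences (stacking with trivially-particled invertible topological orders) corresponds to the contribution of $\SW^4(\pt)$, which is precisely what one quotients out by passing to reduced cohomology.

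Finally, I would run the whole construction $\bZ_2^F[1]$-equivariantly. The condensible algebra $\cO(G)$ is canonical and hence preserved by the Galois action, as is every subsequent step. The classification therefore lands in the reduced twisted-equivariant cohomology group $\widetilde\SW^4(\rB G/\bZ_2^F[1])$, with the twisting determined by $\epsilon$ and the reduction given by the long exact sequence (\ref{eqn.LES}). The main obstacle is the passage in the super step from the narrower $\SH^4$-valued classification of Theorem~\ref{thm.superLKW} (which produces the explicit form $(\Sigma\SVec)^\omega[G]$) to the full $\SW^4$-valued classification up to Morita equivalence: this requires verifying both that every class in $\widetilde\SW^4(\rB G)$ is realized by some such gapped boundary and that isomorphisms of $\s\cA$ not descending to automorphisms of the condensible algebra account for exactly the reducing subgroup; both steps lean critically on the structural results of \cite{me-TopologicalOrders}.
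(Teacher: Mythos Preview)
The paper does not actually prove Theorem~\ref{thm.classification}: it is stated with a \qed and attributed to \cite[Corollary~V.5]{me-TopologicalOrders}. The only indication the paper gives of the argument is the single sentence following the theorem: ``The proof of Theorem~\ref{thm.classification} essentially amounts to a careful study of how categorified Galois descent interacts with Theorem~\ref{thm.superLKW}.'' Your proposal is precisely an expansion of that sentence --- superify, apply the condensation argument of Remark~\ref{LKWargument} in the super world to get Theorem~\ref{thm.superLKW}, then run Galois descent equivariantly --- and so it matches the paper's sketch as closely as one could hope.

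Your self-identified obstacle in the final paragraph is the genuine content: the passage from the $\SH^4$-valued classification of Theorem~\ref{thm.superLKW} (boundary fusion $2$-supercategories of the explicit form $(\Sigma\SVec)^\omega[G]$) to the reduced $\SW^4$-valued statement requires knowing that the remaining ambiguity in the boundary condition is exactly stacking by a Morita-invertible super topological order with trivial line operators, and that this is captured by the map $\SW^\bullet(\pt) \to \SW^\bullet(\rB G)$ whose cokernel defines reduced cohomology. That bookkeeping, together with the equivariant refinement, is what \cite{me-TopologicalOrders} carries out; your outline correctly locates where the work lies without reproducing it.
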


Note that, by \cite{MR1944506}, every symmetric fusion category is equivalent to some $\Rep(G,\epsilon)$. The proof of Theorem~\ref{thm.classification} essentially amounts to a careful study of how categorified Galois descent interacts with Theorem~\ref{thm.superLKW}, which described the classification of super topological orders. When $\epsilon = 1$, one can choose a $\bZ_2^F[1]$-fixed point in $\rB G$, and $\SW^\bullet(\rB G / X) = \W^\bullet(\rB G)$, and so Theorem~\ref{thm.classification} reduces to the bosonic classification described in Remark~\ref{LKWargument}.

\begin{proof}[Proof of the \hyperlink{mainstatement}{Main Theorem}]
  I will apply Theorem~\ref{thm.classification} in the case when $\Rep(G,\epsilon) = \Rep(\bZ_2,\epsilon) = \SVec$; I will write simply ``\text{want}'' for the desired group $\widetilde\SW^4(\rB \bZ_2/\bZ_2^F[1])$. 
  
  The action of $\bZ_2^F[1]$ on $\rB G = \rB \bZ_2 = \bZ_2[1]$ corresponding to the nontrivial $\epsilon$ is the canonical one ``by multiplication,'' and so the quotient is $\rB G/\bZ_2^F[1] = \pt$. Set $E^\bullet = \SW^\bullet$ and $X = \rB G$ and $\cG = \bZ_2^F[1]$. Then the entries $E^\bullet(\pt/\cG)$ and $E^\bullet(X/\cG)$ in the long exact sequence (\ref{eqn.LES}) are
  $$ E^\bullet(\pt/\cG) = \SW^\bullet(\bZ_2^F[2]) = \W^\bullet(\pt), \qquad E^\bullet(X/\cG) = \SW^\bullet(\pt).$$  
  Thus the long exact sequence reads:
  $$ \dots \to \W^4(\pt) \to \SW^4(\pt) \to \text{want} \to \W^5(\pt) \to \SW^5(\pt) \to \dots$$
  Filling in the known values from Lemmas~\ref{lemma.Wpt}, \ref{lemma.SWpt}, and~\ref{lemma.SW5} gives:
  $$ \dots \to \cW \to \rS\cW \to \text{want} \to \W^5(\pt) \to 0 \to \dots$$
  Finally, Remark~\ref{remark.ssMME} says that either $\operatorname{coker}(\cW \to \rS\cW) = \bZ_2$ and $\W^5 = 0$, or $\operatorname{coker}(\cW \to \rS\cW) = 0$ and $\W^5 = \bZ_2$. In either case, the desired group ``\text{want}'' is isomorphic to $\bZ_2$.
\end{proof}

\begin{remark}\label{remark.MMEredux}
  This proof makes clear that if $\pi_0\W^5 = \pi_0(5\Vec)^\times$ is nontrivial, then $\cT$ represents the nontrivial class; contrapositively, if $\cT$ is a Drinfel'd centre, then so is every bosonic 3+1D topological order. Translating through Remarks~\ref{remark.MME} and~\ref{remark.ssMME}, one finds that the minimal modularity conjecture for slightly-degenerate fusion 1-categories is equivalent to the statement that there exists a Morita-nontrivial 3+1D topological order.
  
  Actually, that fact that there is at most one nontrivial class in $\pi_0\W^5$, and that if it exists it is represented by $\cT$, follows directly from the construction in \cite{PhysRevX.9.021005} together with the \hyperlink{mainstatement}{Main Theorem} from this paper. Indeed,  \cite{PhysRevX.9.021005} gives an algorithm to condense any bosonic 3+1D topological order $\cB$ either directly to the vacuum (in which case $\cB$ was Morita trivial) or to a topological order with a single emergent fermion and nothing else. Since, by the \hyperlink{mainstatement}{Main Theorem}, there are only two topological orders with this particle content, one of which ($\cS \cong \cZ_{(1)}(\Sigma\SVec)$) is Morita-trivial, one finds that if a topological order $\cB$ is not Morita trivial, then it must be Morita equivalent to $\cT$.
\end{remark}


\newcommand{\etalchar}[1]{$^{#1}$}

\end{document}